\title{Pullback Attractors for Generalized Evolutionary Systems}
\author{Alexey Cheskidov}
    \address{Department of Mathematics, Statistics, and Computer Science\\
        University of Illinois at Chicago\\
        322 Science and Engineering Offices (M/C 249)\\
        851 S. Morgan Street\\
        Chicago, Illinois 60607-7045 USA}
    \email{acheskid@uic.edu}
\author{Landon Kavlie}
    \address{Department of Mathematics, Statistics, and Computer Science\\
        University of Illinois at Chicago\\
        322 Science and Engineering Offices (M/C 249)\\
        851 S. Morgan Street\\
        Chicago, Illinois 60607-7045 USA}
    \email{lkavli2@uic.edu}
\thanks{The authors were partially supported by NSF Grant DMS-1108864.}
\newcommand{\ddr}{\mathrm{dr}}
\newcommand{\dds}{\mathrm{ds}}
\newcommand{\ddx}{\mathrm{dx}}
\newcommand{\dxi}{\mathrm{d}\xi}
\newcommand{\derivt}{\mathrm{\frac{d}{dt}}}
\newcommand{\supp}{\mathrm{supp}}
\newcommand{\ee}{\epsilon}
\newcommand{\ww}{\omega}
\newcommand{\WW}{\Omega}
\newcommand{\Sig}{\Sigma}
\newcommand{\sig}{\sigma}
\newcommand{\ds}{\mathrm{d_s}}
\newcommand{\dw}{\mathrm{d_w}}
\newcommand{\dd}{\mathrm{d_\bullet}}
\newcommand{\wwd}{\omega_\bullet}
\newcommand{\wws}{\omega_\mathrm{s}}
\newcommand{\www}{\omega_\mathrm{w}}
\newcommand{\WWd}{\Omega_\bullet}
\newcommand{\WWs}{\Omega_\mathrm{s}}
\newcommand{\WWw}{\Omega_\mathrm{w}}
\newcommand{\N}{\mathbb{N}}
\newcommand{\Z}{\mathbb{Z}}
\newcommand{\R}{\mathbb{R}}
\newcommand{\AAs}{\mathscr{A}_\mathrm{s}}
\newcommand{\AAw}{\mathscr{A}_\mathrm{w}}
\newcommand{\AAd}{\mathscr{A}_\bullet}
\newcommand{\Xs}{X_\mathrm{s}}
\newcommand{\Xw}{X_\mathrm{w}}
\newcommand{\BB}{\mathscr{B}}
\newcommand{\EE}{\mathscr{E}}
\newcommand{\FF}{\mathscr{F}}
\newcommand{\II}{\mathscr{I}}
\newcommand{\PP}{\mathscr{P}}
\newcommand{\TT}{\mathscr{T}}
\newcommand{\abs}[1]{\lvert#1\rvert}
\newcommand{\norm}[1]{\lVert#1\rVert}
\newcommand{\ip}[2]{\left(#1,#2\right)}
\newcommand{\IP}[2]{\left(\left(#1,#2\right)\right)}
\newcommand{\angip}[2]{\langle#1,#2\rangle}
\newcounter{icount}
\newcommand{\refitem}[1]{\item[#1]\refstepcounter{icount}\label{#1}}
\newcommand{\iref}[1]{\hyperref[#1]{#1}}
\pgfplotsset{soldot/.style={color=black,only marks,mark=*}} 
\pgfplotsset{holdot/.style={color=black,fill=white,only marks,mark=*}}
\begin{document}

\newtheorem*{thm*}{Theorem}
\newtheorem{thm}{Theorem}[section]
\newtheorem{prop}[thm]{Proposition}
\newtheorem{lem}[thm]{Lemma}
\newtheorem{cor}[thm]{Corollary}
\newtheorem{define}[thm]{Definition}
\newtheorem{rmk}[thm]{Remark}

\begin{abstract}
We give an abstract framework for studying nonautonomous PDEs, called a 
generalized evolutionary system. In this setting, we define the notion of a
pullback attractor. Moreover, we show that the pullback attractor, in the weak
sense, must always exist. We then study the structure of these attractors and
the existence of a strong pullback attractor. We then apply our framework to
both autonomous and nonautonomous evolutionary systems as they first appeared
in earlier works by Cheskidov, Foias, and Lu. In this context, we compare the
pullback attractor to both the global attractor (in the autonomous case) and
the uniform attractor (in the nonautonomous case). Finally, we apply our
results to the nonautonomous 3D Navier-Stokes equations on a periodic domain
with a translationally bounded force. We show that the Leray-Hopf weak
solutions form a generalized evolutionary system and must then have a weak
pullback attractor.
\end{abstract}


\maketitle

\section{Introduction}
Uniform attractors have been extensively studied since their introduction by 
Haraux as the minimal compact set which attracts all the trajectories starting 
from a bounded set uniformly with respect to the initial time \cite{H91}. One 
approach that has been widely accepted uses the tools developed by Chepyzhov 
and Vishik \cite{CV94}, \cite{CV02}. Their method involves the use of a time 
symbol and a family of processes. The uniform attractor, under sufficient 
conditions is fibered over the symbol space into ``kernel sections." These 
kernel sections, however, do not have classical attraction properties. The 
attraction is in a pullback sense, letting the initial time go to minus 
infinity \cite{CV93}. The concept of a pullback attractor originated in the 
work of Crauel, Flandoli, Kloeden, and Schmalfuss \cite{CF94}, \cite{KS97}. 
For more information on pullback attractors, see the books of Kloeden and 
Rasmussen \cite{KR11}, Carvalho et al \cite{CLR13}, and Cheban \cite{C04}. 

The question of uniqueness for the 3D Navier-Stokes equations (NSEs) is still 
unresolved. Even so, many frameworks exist for studying the asymptotic 
dynamics of a system evolving according to the 3D NSEs, without assuming 
uniqueness of the solutions. For a comparison between two canonical 
frameworks, see Caraballo et al \cite{CMR03}. In their paper, they compare 
the framework of multivalued semiflows used by Melnik and Valero \cite{MV98} 
to the framework of generalized semiflows developed by Ball \cite{B97}. The 
first approach involves a set-valued function with some inclusion properties 
that account for the lack of uniqueness. The approach used by Ball involves 
defining trajectories in the phase space, keeping in mind that there may be 
more than one trajectory starting from a single starting value. Ball's 
construction requires certain assumptions about trajectories including the 
ability to concatenate them which are still not known for the Leray-Hopf weak 
solutions to the 3D NSEs. 

In their paper \cite{CF06}, Cheskidov and Foias introduced the concept of an 
evolutionary system allowing them to construct a framework based on known 
results for the Leray-Hopf weak solutions to the autonomous 3D NSEs. Moreover, 
they defined the concept of a global attractor in this setting. Cheskidov 
later added the idea of a trajectory attractor for an evolutionary system 
\cite{C09}. In order to deal with nonautonomous systems, Cheskidov and Lu 
introduced the concept of a nonautonomous evolutionary system and have applied 
it to the 3D NSEs and certain reaction-diffusion equations (\cite{CL09}, 
\cite{CL12}). In this paper, we introduce the idea of a generalized 
evolutionary  system, a generalization of the previous concepts where we 
remove the ability to ``shift trajectories." In this setting, we explore the 
existence of pullback attractors as well as their relationship to the 3D NSEs. 
We also explore the relationship between pullback attractors for generalized 
evolutionary systems, global attractors for autonomous evolutionary systems, 
and uniform attractors for nonautonomous evolutionary systems.

As in the autonomous case, several frameworks exist for studying the 
nonautonomous dynamical systems without uniqueness. We describe two of the 
those theories. The first is the theory of trajectory attractors 
(\cite{S96}, \cite{FS99}, \cite{CV02}, \cite{SY02}). This approach studies 
trajectories as points in the ``space of trajectories" (such as $C([0,\infty);
L^2)$). Then, one studies the attraction properties in this space of 
trajectories. One can then try to project down to the phase space by using an 
evaluation map and study the attraction properties of these projections. The 
second technique is the use of multivalued processes (\cite{KV07}, 
\cite{V11}). 

Although our framework follows trajectories which exist in a ``space of 
trajectories," our framework differs from the framework of trajectory 
attractors. We use trajectories only to follow the evolution of points in the 
phase space. Thus, we develop attraction properties only in the phase space. 
Our framework also differs from the classical framework of multivalued 
processes in that we follow individual trajectories taking a single point to 
another single point in the phase space. If one were to union together all of 
the possible ending points from a given starting point, one could define a 
multivalued process from individual trajectories. Also different from either 
of these frameworks, we exploit, simultaneously, both the strong and weak 
topologies on our space. To our knowledge, this gives the first proof of the 
existence of a weak pullback attractor for the Leray-Hopf weak solutions of 
the 3D NSEs using the known properties of the Leray-Hopf weak solutions. The 
outline of our paper is given below.

First, in Section~\ref{setup}, we define a generalized evolutionary system 
$\EE$, a pullback attractor $\AAd(t)$, and the concept of a pullback 
omega-limit set $\WWd(A,t)$. Here, $\bullet$ represents either $w$ or $s$ to 
represent the fact that we are considering our phase space $X$ with two 
topologies, the weak and the strong topology, respectively. We then show 
the existence of the weak pullback attractor $\AAw(t)$ and give a 
characterization of the strong pullback attractor $\AAs(t)$ (if it exists) in 
terms of pullback omega-limits. Moreover, we relate the concept of a 
generalized evolutionary system to the classical theory of processes as given 
in \cite{CLR13}. We follow this with a short section of worked examples in 
Section~\ref{examples}. Here, we apply our framework to both abstract and 
physical situations to illustrate the possible relationships between the weak 
and strong pullback attractors.

In Section~\ref{strongPAC}, we introduce the concept of pullback asymptotic 
compactness. We then show that in this setting, the weak pullback attractor 
$\AAw(t)$ is, in fact, a strongly compact strong pullback attractor $\AAs(t)$. 
This generalizes classical results using asymptotic compactness in the 
autonomous setting. Next, in Section~\ref{tracking}, we add the assumption 
that each family of trajectories is compact in $C([s,\infty);\Xw)$. This is 
true of the Leray-Hopf solutions for the nonautonomous 3D NSEs under certain 
assumptions. We introduce the ideas of pullback invariance, pullback 
semi-invariance, and pullback quasi-invariance. We then show that the family 
of weak omega-limit sets $\WWw(A,t)$ is pullback quasi-invariant. This allows 
us to give our first characterization of the weak pullback attractor in terms 
of complete trajectories. That is, the weak pullback attractor is the maximal 
pullback invariant and maximal pullback quasi-invariant subset of the phase 
space. Moreover, we have a weak pullback tracking property 
(Theorem~\ref{weak=i}). If $\EE$ satisfies the property of being pullback 
asymptotically compact, then we have that the strong pullback attractor 
$\AAs(t)$ is the maximal pullback invariant and maximal pullback 
quasi-invariant set in the phase space. Moreover, we have a characterization 
of $\AAs(t)$ in terms of complete trajectories as well as a strong pullback 
tracking property (Theorem~\ref{strong=i}).

For Section~\ref{energy}, we add other assumptions to our generalized 
evolutionary system which are known for Leray-Hopf weak solutions to the 
nonautonomous 3D NSEs with an appropriate forcing term. Namely, an energy 
inequality and strong convergence a.e. for a weakly convergent sequence of 
trajectories. Under these assumptions, along with the compactness in 
$C([s,\infty);\Xw)$ from Section~\ref{tracking}, and the assumption that 
complete trajectories are strongly continuous (that is, $\EE((-\infty,\infty))
\subseteq C((-\infty,\infty);\Xs)$), we can show that the generalized 
evolutionary system is actually pullback asymptotically compact. This is a 
generalization of the corresponding result in \cite{C09}. Earlier results of 
this kind can be found in \cite{B97} and \cite{R06}.

The next section, Section~\ref{relate} relates results already discovered for 
evolutionary systems given in \cite{CF06}, \cite{C09}, and \cite{CL12} to our 
new generalized framework. We begin by recalling the basic definitions for an 
autonomous evolutionary system (as given in \cite{CF06} and \cite{C09}) and 
a nonautonomous evolutionary system (as given in \cite{CL09} and \cite{CL12}). 
In the autonomous case, we prove that the global attractor $\AAd$ for an 
autonomous evolutionary system exists if and only if the pullback attractor 
$\AAd(t)$ exists. Moreover, we have that for each $t$, $\AAd=\AAd(t)$. In the 
nonautonomous case, we show that the $\dd$-uniform attractor (if it exists) 
$\AAd^\Sig$ always contains the union of all the pullback Omega-limits. That 
is, in Theorem~\ref{unifsections}, we show that
\begin{equation*}
\overline{\bigcup_{\sig\in\Sig}\WWd^\sig(X,t_0)}^\bullet\subseteq\AAd^\Sig
\end{equation*} 
where $\WWd^\sig(X,t_0)$ is the $\dd$-pullback omega limit set at an arbitrary 
fixed time $t_0$ for the generalized evolutionary system $\EE_\sig$ with the 
fixed symbol $\sig\in\Sig$. As before, using the existence of the weak uniform 
and weak pullback attractors, Corollary~\ref{generalweakunifpullback} gives us 
that 
\begin{equation*}
\overline{\bigcup_{\sig\in\Sig}\AAw^\sig(t_0)}^w\subseteq\AAw^\Sig.
\end{equation*}
On the other hand, with additional assumptions on the nonautonomous 
evolutionary system $\EE_\Sig$ and each generalized evolutionary system 
$\EE_\sig$ for $\sig\in\Sig$ fixed, we use known characterizations of the 
weak uniform attractor $\AAw^\Sig$ and the weak pullback attractor 
$\AAw^\sig(t)$ to say in Theorem~\ref{unif=pullbacksections} that 
\begin{equation*}
\AAw^\sig=\overline{\bigcup_{\sig\in\Sig}\AAw^\sig(0)}^w.
\end{equation*}
Finally, we add additional assumptions to ensure asymptotic compactness of 
$\EE_\Sig$ and each $\EE_\sig$. This then guarantees the existence of a 
strongly compact strong uniform attractor for $\EE_\Sig$ and a strongly 
compact strong pullback attractor for each $\EE_\sig$. In this case, as shown 
in Theorem~\ref{strongsections},
\begin{equation*}
\AAs^\Sig = \AAw^\Sig = \overline{\bigcup_{\sig\in\Sig}\AAw^\sig(0)}^w 
                      = \overline{\bigcup_{\sig\in\Sig}\AAs^\sig(0)}^w.
\end{equation*}
This harkens back to the classical theory given in \cite{CV02} which shows 
that the uniform attractor, under certain assumptions, is fibered. Moreover, 
the sections of the uniform attractor have attraction properties which are in 
a pullback sense. For a discussion on the relationship between pullback and 
uniform attractors using the framework of multivalued processes, we refer the 
reader to \cite{CLMV03}.

Finally, Section~\ref{3dnses} closes with an application of our setup to 
Leray-Hopf weak solutions of the nonautonomous 3D NSEs. We use a periodic 
setup along with a translationally bounded force. In this setting, we prove 
that Leray solutions (which are continuous at the starting time) converge in a 
to an absorbing ball in $L^2$ which is weakly compact. Using this, we show 
that the Lery-Hopf weak solutions form a generalized evolutionary system. 
Therefore, by the previously-developed theory, there exists a weak pullback 
attractor. In fact, we have that 
\begin{equation*}
\AAw(t)=\{u(t):u~\mathrm{is~a~complete~bounded~solution}\}.
\end{equation*}
This generalizes the results Foias and Temam \cite{FT85}. Moreover, if the 
force is assumed to be normal, and if we add the assumption that complete 
trajectories, Leray-Hopf solutions which exist for all $t\in(-\infty,\infty)$, 
are strongly continuous, in $C((-\infty,\infty);\Xs)$, then the three 
properties listed above apply and the given generalized evolutionary system is 
pullback asymptotically compact. In this case, the weak attractor is a 
strongly compact, strong pullback attractor.

\section{Generalized Evolutionary System}
\label{setup}
\subsection{Preliminaries}

We start with the setup as it first appeared in \cite{CF06}. So, let 
$(X,\ds(\cdot,\cdot))$ be a metric space with a metric $\ds$ known as the 
strong metric on $X$. Let $\dw$ be another metric on $X$ satisfying the 
following conditions:
\begin{enumerate}
\item $X$ is $\dw$ compact. 
\item If $\ds(u_n,v_n)\rightarrow0$ as $n\rightarrow\infty$ for some $u_n,v_n
\in X$ then $\dw(u_n,v_n)\rightarrow0$ as $n\rightarrow\infty$.
\end{enumerate}
As justified by property (2), we will call $\dw$ the weak metric on $X$. Denote 
by $\overline{A}^\bullet$ the closure of the set $A\subseteq X$ in the topology 
generated by $\dd$. Note that any strongly compact set ($\ds$-compact) is also 
weakly compact ($\dw$-compact), and any weakly closed set ($\dw$-closed) is 
also strongly closed ($\ds$-closed).

Let $C([a,b];X_\bullet)$, where $\bullet=$ s or w, be the space of $\dd$-
continuous $X$-valued functions on $[a,b]$ endowed with the metric
\begin{equation*}
\mathrm{d}_{C([a,b];X_\bullet)}(u,v):=\sup_{t\in[a,b]}\dd(u(t),v(t)).
\end{equation*}
Let also $C([a,\infty);X_\bullet)$ be the space of all $\dd$-continuous 
$X$-valued functions on $[a,\infty)$ endowed with the metric
\begin{equation*}
\mathrm{d}_{C([a,\infty);X_\bullet)}(u,v):=\sum_{n\in\N}\frac{1}{2^n}
 \frac{\sup\{\dd(u(t),v(t)):a\le t\le a+n\}}{1+\sup\{\dd(u(t),v(t)):a\le t
    \le a+n\}}.
\end{equation*}

Let 
\begin{equation*}
\TT:=
  \{I\subset\R:I=[T,\infty)\mathrm{~for~some~}T\in\R\}\cup\{(-\infty,\infty)\},
\end{equation*}
and for each $I\in\TT$, let $\FF(I)$ denote the set of all $X$-valued functions 
on $I$.

\begin{define}
\label{GES}
A map $\EE$ that associates to each $I\in\TT$ a subset $\EE(I)\subset\FF(I)$ 
will be called a generalized evolutionary system if the following conditions 
are satisfied:
\begin{enumerate}
\item $\EE([\tau,\infty))\ne\emptyset$ for each $\tau\in\R$.
\item $\{u(\cdot)|_{I_2}:u(\cdot)\in\EE(I_1)\}\subseteq\EE(I_2)$ for each $I_1, 
I_2\in\TT$ with $I_2\subseteq I_1$.
\item $\EE((-\infty,\infty))=\{u(\cdot):u(\cdot)|_{[T,\infty)}\in\EE([T,\infty))
~\forall T\in\R\}$.
\end{enumerate}
\end{define}
We will refer to $\EE(I)$ as the set of all trajectories on the time 
interval $I$. Trajectories in $\EE((-\infty,\infty))$ are called complete. 
Next, for each $t\ge s\in\R$ and $A\subseteq X$, we define the map 
\begin{equation*}
P(t,s):\PP(X)\rightarrow\PP(X),
\end{equation*}
\begin{equation*}
P(t,s)A:=\{u(t):u(s)\in A, u\in\EE([s,\infty))\}.
\end{equation*}
We get, for each $t\ge s\ge r\in\R$ and $A\subseteq X$
\begin{equation*}
P(t,r)A\subset P(t,s)P(s,r)A.
\end{equation*}

We will also study generalized evolutionary systems endowed with the 
following properties:
\begin{enumerate}
\refitem{A1}$\EE([s,\infty))$ is compact in $C([s,\infty);\Xw)$ for 
each $s\in\R$.
\refitem{A2} (Energy Inequality) Let $X$ be a set in some Banach space 
$H$ satisfying the Radon-Riesz Property (see below) with norm $\abs{\cdot}$ so 
that $\ds(x,y)=\abs{x-y}$ for each $x,y\in X$, and assume that $\dw$ induces 
the weak topology on $X$. Assume that for each $\ee>0$ and each $s\in\R$ there 
is a $\delta:=\delta(\ee,s)$ so that for every $u\in\EE([s,\infty))$ and 
$t>s\in\R$ 
\begin{equation*}
\abs{u(t)}\le\abs{u(t_0)}+\ee
\end{equation*}
for $t_0$ a.e. in $(t-\delta,t)$.
\refitem{A3} (Strong Convergence a.e.) Let $u,u_n\in\EE([s,\infty))$ be 
so that $u_n\rightarrow u$ in $C([s,t];\Xw)$ for some $s\le t\in\R$. Then, 
$u_n(t_0)\xrightarrow{\ds}u_n(t_0)$ for a.e. $t_0\in[s,t]$.
\end{enumerate}

\begin{rmk}
A Banach space $H$ with norm $\abs{\cdot}$ satisfies the Radon-Riesz property 
if $x_n\rightarrow x$ in norm if and only if $x_n\rightarrow x$ weakly and 
\begin{equation*}
\lim_{n\rightarrow\infty}\abs{x_n}=\abs{x}.
\end{equation*}
Often, $X$ will be a closed, bounded subset of a separable, reflexive Banach 
space. By the Troyanski Renorming Theorem, we can assume that our norm makes 
$H$ a locally uniformly convex space, at which point the Radon-Riesz property 
is satisfied.
\end{rmk}

To see how this relates back to the classical setting, let $H$ be a separable, 
reflexive Banach space, which we call the phase space. Let $S(\cdot,\cdot)$ be 
a process on $H$. That is, for each $t\ge s$, we have that $S(t,s):H
\rightarrow H$ with the following properties:
\begin{align*}
S(t,s) &= S(t,r)S(r,s) \\
S(t,t) &= Id_H
\end{align*}
for any $t\ge r\ge s$. A trajectory on $H$ is a mapping $u:[s,\infty)
\rightarrow H$ so that $u(t)=S(t,s)u(s)$ for each $t\ge s$. A set $X\subseteq 
H$ will be called absorbing if, for each $s\in\R$ and $B\subseteq H$ bounded, 
there is $t_0:=t_0(B,s)$ so that for $t\ge t_0$, 
\begin{equation*}
S(t,s)B\subseteq X.
\end{equation*}
If there exists a closed absorbing ball $X$, then we call the process $S$ 
dissipative. 

If $S$ is dissipative, and we can ensure that it is dissipative arbitrarily 
far in the past (that is, for each $s\in\R$, there is a trajectory 
$u:[s,\infty)\rightarrow X$), then studying the asymptotic pullback dynamics 
of $S$ on $H$ amounts to studying the asymptotic pullback dynamics of $S$ on 
$X$. That is, using the definition of the pullback attractor 
Definition~\ref{attractor}, one can show that if $X$ has a pullback attractor 
$\mathscr{A}(t)$ (by restricting $S$ to $X$), then $\mathscr{A}(t)$ is a 
pullback attractor for $H$. Note that since $H$ is a separable reflexive 
Banach space, both the strong and weak topologies on $X$ are metrizable. We 
define a generalized evolutionary system on $X$ by 
\begin{equation*}
\EE([s,\infty)):=\{u(\cdot):u(t)=S(t,s)u(s), u(t)\in X~\forall t\ge s\}.
\end{equation*}
In particular, this also gives us the following characterization for each 
$t\gg s\in\R$ and $A\subseteq X$
\begin{equation*}
P(t,s)A=S(t,s)A.
\end{equation*}

As we will see later, by Theorem~\ref{weakattracts} and 
Theorem~\ref{weakattractorexists} that the weak pullback attractor exists for 
$\EE$ and 
\begin{equation*}
\AAw(t)=\WWw(X,t)=\bigcap_{s\le t}\overline{\bigcup_{r\le s}S(t,r)X}^w.
\end{equation*}
Moreover, if we know that \iref{A1} holds (that $\EE([s,\infty))$ is compact 
in $C([s,\infty);\Xw)$ for each $s\in\R$), then we get, using 
Theorem~\ref{weak=i} that 
\begin{equation*}
\AAw(t)=\{u(t):u\in\EE((-\infty,\infty))\}.
\end{equation*}
Finally, if we also have that \iref{A2} and \iref{A3} also hold and complete 
trajectories are strongly continuous $\EE((-\infty,\infty))\subseteq 
C((-\infty,\infty);X)$, then by Corollary~\ref{123implystrong}, $\EE$ 
possesses a strongly compact, strong pullback attractor $\AAs(t)$. In fact, 
by Corollary~\ref{a1weak=strong}, $\AAs(t)=\AAw(t)=\{u(t):
u\in\EE((-\infty,\infty))\}$.

\subsection{Pullback Attracting Sets, $\WW$-limits, and Pullback Attractors}

Let $\EE$ be a fixed generalized  evolutionary system on a metric space $X$. 
For $A\subseteq X$ and $r>0$, denote $B_\bullet(A,r):=\{x\in X:\dd(x,A)<r\}$,  
where 
\begin{equation*}
\dd(x,A):=\inf_{a\in A}\dd(x,a),~\bullet=\mathrm{s,w}.
\end{equation*}
A family of sets $A(t)\subseteq X$, $t\in\R$ (uniformly) pullback attracts a 
set $B\subseteq X$ in the $\dd$-metric ($\bullet=$s, w) if for any $\ee>0$, 
there exists an $s_0:=s_0(B,\ee,t)<t\in\R$ so that for $s\le s_0$, 
\begin{equation*}
P(t,s)B\subseteq B_\bullet(A(t),\ee).
\end{equation*}

\begin{define}
A family of sets $A(t)\subseteq X$ for $t\in\R$ are $\dd$-pullback attracting 
($\bullet=$s, w) if they pullback attract $X$ in the $\dd$-metric.
\end{define}

\begin{define}
\label{attractor}
A family of sets $\AAd(t)\subseteq X$ is the $\dd$-pullback attractor of $X$ 
if for each $t$, $\AAd(t)$ is $\dd$-closed, $\dd$-pullback attracting and 
$\AAd(t)$ is minimal with respect to these properties. 
\end{define}

Next, we define the concept of the pullback $\WWd$-limit.

\begin{define}
For each $A\subseteq X$ and $t\in\R$, we define the pullback $\WW$-limit 
($\bullet=$s, w) of $A$ as 
\begin{equation*}
\WWd(A,t):=\bigcap_{s\le t}\overline{\bigcup_{r\le s}P(t,r)A}^\bullet.
\end{equation*}
\end{define}

Equivalently, we have that $x\in\WWd(A,t)$ if there exist sequences $s_n
\rightarrow-\infty$, $s_n\le t$, $x_n\in P(t,s_n)A$ so that $x_n\rightarrow x$ 
in the $\dd$-metric. We now present some basic properties of $\WWd$.

\begin{lem}
\label{wproperties}
Let $A\subseteq X$ and $t\in\R$. Then,
\begin{enumerate}
\item $\WWd(A,t)$ is $\dd$-closed ($\bullet=$s, w).
\item $\WWs(A,t)\subseteq\WWw(A,t)$.
\item If $\WWw(A,t)$ is strongly compact and uniformly, strongly, pullback 
attracts $A$, then $\WWs(A,t)=\WWw(A,t)$.
\end{enumerate}
\end{lem}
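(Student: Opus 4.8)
The plan is to dispatch the first two items directly from the definitions and then concentrate the real work on item~3, where the two topologies interact. For item~1 I would simply observe that $\WWd(A,t)=\bigcap_{s\le t}\overline{\bigcup_{r\le s}P(t,r)A}^\bullet$ is an intersection of sets that are each $\dd$-closed, being $\dd$-closures; an arbitrary intersection of $\dd$-closed sets is $\dd$-closed. For item~2 I would use the sequential characterization of $\WWd$ stated just above the lemma: if $x\in\WWs(A,t)$, pick $s_n\to-\infty$ with $s_n\le t$ and $x_n\in P(t,s_n)A$ such that $\ds(x_n,x)\to0$. By property~(2) of the weak metric, $\ds(x_n,x)\to0$ forces $\dw(x_n,x)\to0$, so the very same sequences witness $x\in\WWw(A,t)$. (Equivalently, $\overline{B}^s\subseteq\overline{B}^w$ for every $B$, since a weakly closed set is strongly closed, and one intersects term by term.)

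For item~3, item~2 already supplies $\WWs(A,t)\subseteq\WWw(A,t)$, so it remains to prove the reverse inclusion under the stated hypotheses. Let $x\in\WWw(A,t)$ and choose $s_n\to-\infty$, $s_n\le t$, and $y_n\in P(t,s_n)A$ with $\dw(y_n,x)\to0$. The idea is to upgrade this weak convergence to strong convergence using the strong attraction of $\WWw(A,t)$ together with its strong compactness. Since $\WWw(A,t)$ uniformly strongly pullback attracts $A$, for each $\ee>0$ there is $s_0<t$ with $P(t,s)A\subseteq B_\mathrm{s}(\WWw(A,t),\ee)$ for all $s\le s_0$; as $s_n\to-\infty$ this yields $\ds(y_n,\WWw(A,t))\to0$, so I may select $z_n\in\WWw(A,t)$ with $\ds(y_n,z_n)\to0$. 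By strong compactness, a subsequence $z_{n_k}$ converges strongly to some $z\in\WWw(A,t)$, whence $y_{n_k}\to z$ strongly as well.

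The crux is then a uniqueness-of-limits argument: since strong convergence implies weak convergence (property~(2)), $y_{n_k}\to z$ weakly; but $y_{n_k}\to x$ weakly too, and because $\dw$ is a metric, weak limits are unique, forcing $z=x$. Thus $y_{n_k}\to x$ strongly with $y_{n_k}\in P(t,s_{n_k})A$ and $s_{n_k}\to-\infty$, which is exactly the sequential criterion for $x\in\WWs(A,t)$. Combined with item~2 this gives $\WWs(A,t)=\WWw(A,t)$. The main obstacle I anticipate is precisely this interplay between the topologies: one must manufacture a \emph{strongly} convergent sequence out of a merely weakly convergent one, and the only leverage available is the strong attraction (which drags $y_n$ close to the set) together with strong compactness (which supplies a strong limit). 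Reconciling that strong limit with the prescribed weak limit $x$ is exactly what the Hausdorffness of the metric $\dw$ buys us.
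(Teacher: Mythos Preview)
Your proof is correct and follows essentially the same route as the paper: parts~1 and~2 are handled exactly as the authors do (intersection of closures, and the sequential characterization together with the fact that $\ds$-convergence implies $\dw$-convergence), and for part~3 your argument---pick a weakly convergent sequence from $P(t,s_n)A$, use strong attraction to approximate it by points of $\WWw(A,t)$, extract a strongly convergent subsequence by compactness, and identify the strong limit with $x$ via uniqueness of weak limits---is precisely the paper's proof with relabeled variables.
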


\begin{proof}
Part 1 is obvious from the definition. For part 2, let $x\in\WWs(A,t)$. Then, 
there exists sequences $s_n\le t$, $s_n\rightarrow-\infty$ and $x_n\in P(t,s_n)
A$ with $x_n\xrightarrow{\ds}x$. But then, $x_n\xrightarrow{\dw}x$ and $x\in
\WWw(A,t)$.

Now, suppose that $\WWw(A,t)$ is strongly compact and $\ds$-pullback attracts 
$A$. Let $x\in\WWw(A,t)$. Then, by definition, there are sequences $s_n\le t$, 
$s_n\rightarrow-\infty$ and $x_n\in P(t,s_n)A$ with $x_n\xrightarrow{\dw}x$. 
Since $\WWw(A,t)$ $\ds$-pullback attracts $A$, there exists a sequence $y_n\in 
\WWw(A,t)$ with $\ds(x_n,y_n)\rightarrow0$ as $n\rightarrow\infty$. Because 
$\ds(x_n,y_n)\rightarrow0$, $\dw(x_n,y_n)\rightarrow0$. Since $\WWw(A,t)$ is 
compact, there is some subsequence $y_{n_k}\xrightarrow{\ds}y$ for some $y\in
\WWw(A,t)$. But then, $x_{n_k}\xrightarrow{\ds}y$ which means that 
$x_{n_k}\xrightarrow{\dw}y$. Thus, $y=x$ which means that $x_{n_k}\xrightarrow
{\ds}x$. That is, $x\in\WWs(A,t)$.
\end{proof}

\begin{lem}
\label{minimal}
Let $A(t)$ be a family of $\dd$-closed, $\dd$-pullback attracting sets 
($\bullet=$s, w). Then $\WWd(X,t)\subseteq A(t)$.
\end{lem}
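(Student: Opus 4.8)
The plan is to unwind the sequential characterization of the pullback $\WW$-limit, feed it into the defining inequality of pullback attraction, and close the argument using the $\dd$-closedness of $A(t)$. Fix $t\in\R$ and take an arbitrary $x\in\WWd(X,t)$; the goal is to show $x\in A(t)$. By the equivalent characterization recorded immediately after the definition of $\WWd$, there exist sequences $s_n\to-\infty$ with $s_n\le t$ and points $x_n\in P(t,s_n)X$ such that $x_n\to x$ in the $\dd$-metric.

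Next I would invoke the hypothesis that $A(t)$ is $\dd$-pullback attracting, i.e. that it pullback attracts $B=X$. Fix $\ee>0$. Then there is an $s_0<t$ such that $P(t,s)X\subseteq B_\bullet(A(t),\ee)$ for all $s\le s_0$. Since $s_n\to-\infty$, there is an $N$ with $s_n\le s_0$ for all $n\ge N$, and hence $x_n\in B_\bullet(A(t),\ee)$ for such $n$; equivalently $\dd(x_n,A(t))<\ee$.

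Now I would pass to the limit in $n$. By the triangle inequality for the point-to-set distance, for every $a\in A(t)$ one has $\dd(x,a)\le\dd(x,x_n)+\dd(x_n,a)$, and taking the infimum over $a$ gives $\dd(x,A(t))\le\dd(x,x_n)+\dd(x_n,A(t))<\dd(x,x_n)+\ee$ for all $n\ge N$. Letting $n\to\infty$ and using $x_n\to x$ yields $\dd(x,A(t))\le\ee$. Since $\ee>0$ was arbitrary, $\dd(x,A(t))=0$, so $x$ lies in the $\dd$-closure of $A(t)$; as $A(t)$ is $\dd$-closed, $x\in A(t)$. This proves $\WWd(X,t)\subseteq A(t)$.

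There is no serious obstacle here: the argument is a direct composition of the two definitions. The only point requiring minor care is the order of the quantifiers, namely that one must fix $\ee$ first, extract $s_0$ together with the tail condition $s_n\le s_0$, pass to the limit in $n$, and only afterwards let $\ee\to0$. The same proof works verbatim for both $\bullet=\mathrm{s}$ and $\bullet=\mathrm{w}$, since nothing beyond the metric-space axioms and the stated definitions of $\WWd$ and of pullback attraction is used.
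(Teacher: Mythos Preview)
Your proof is correct and follows essentially the same approach as the paper's: both pick $x\in\WWd(X,t)$ via the sequential characterization, use pullback attraction to push $x_n$ close to $A(t)$, and conclude by $\dd$-closedness. The only cosmetic difference is that the paper explicitly selects approximating points $a_n\in A(t)$ with $\dd(x_n,a_n)\to0$ and shows $a_n\to x$, whereas you work directly with the point-to-set distance $\dd(x,A(t))$; the two formulations are interchangeable.
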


\begin{proof}
Let $x\in\WWd(X,t)$. Then, there exist sequences $s_n\le t$, $s_n\rightarrow
-\infty$, and $x_n\in P(t,s_n)X$ with $x_n\xrightarrow{\dd}x$. Since $A(t)$ is 
$\dd$-pullback attracting, there exists $a_n\in A(t)$ with $\dd(x_n,a_n)
\rightarrow0$ as $n\rightarrow\infty$. But, $x_n\xrightarrow{\dd}x$ which gives 
us that $a_n\xrightarrow{\dd}x$. Since $A(t)$ is $\dd$-closed, $x\in A(t)$.
\end{proof}

Now, we are ready to show that if the $\dd$-pullback attractor exists, then it 
is unique.

\begin{thm}
\label{exists}
If the pullback attractor $\AAd(t)$ exists ($\bullet=$s, w), then 
\begin{equation*}
\AAd(t)=\WWd(X,t).
\end{equation*}
\end{thm}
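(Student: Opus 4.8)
The plan is to prove the two inclusions $\WWd(X,t)\subseteq\AAd(t)$ and $\AAd(t)\subseteq\WWd(X,t)$ separately, in both cases exploiting the minimality built into Definition~\ref{attractor}. The first inclusion is immediate: since $\AAd(t)$ is by assumption $\dd$-closed and $\dd$-pullback attracting, Lemma~\ref{minimal} applies verbatim and yields $\WWd(X,t)\subseteq\AAd(t)$. For the reverse inclusion the idea is to exhibit the family $\WWd(X,t)$ as a competitor in the class of sets over which $\AAd(t)$ is minimal: once $\WWd(X,t)$ is known to be $\dd$-closed and $\dd$-pullback attracting, the minimality of $\AAd(t)$ forces $\AAd(t)\subseteq\WWd(X,t)$, and equality follows.

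Thus the whole argument reduces to verifying that $\WWd(X,t)$ is a $\dd$-closed, $\dd$-pullback attracting family. Closedness is free: it is exactly Part~1 of Lemma~\ref{wproperties}. So the real content, and the step I expect to be the main obstacle, is to show that $\WWd(X,t)$ pullback attracts $X$ in the $\dd$-metric.

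In the weak case I would argue by contradiction using the $\dw$-compactness of $X$. Suppose $\WWw(X,t)$ does not weakly pullback attract $X$. Then there are $\ee>0$, a sequence $s_n\to-\infty$ with $s_n\le t$, and points $x_n\in P(t,s_n)X$ with $\dw(x_n,\WWw(X,t))\ge\ee$ for every $n$. Since $X$ is $\dw$-compact, a subsequence satisfies $x_{n_k}\xrightarrow{\dw}x$ for some $x\in X$. By the sequential description of the pullback $\WW$-limit recorded just after its definition, $x\in\WWw(X,t)$. On the other hand $\dw(\cdot,\WWw(X,t))$ is $\dw$-continuous, so $\dw(x,\WWw(X,t))=\lim_k\dw(x_{n_k},\WWw(X,t))\ge\ee>0$, contradicting $x\in\WWw(X,t)$. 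Hence $\WWw(X,t)$ weakly pullback attracts $X$, and the minimality argument closes the weak case.

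The strong case is where the difficulty concentrates: the same scheme calls for extracting a $\ds$-convergent subsequence from the $x_n\in P(t,s_n)X$, but $X$ need not be $\ds$-compact, so weak compactness no longer suffices. The plan here is to feed in the standing hypothesis that $\AAs(t)$ exists: because $\AAs(t)$ is strongly attracting, one can replace each $x_n$ by a nearby $a_n\in\AAs(t)$ with $\ds(x_n,a_n)\to0$, so that the problem becomes that of producing strong limit points inside $\AAs(t)$. Supplying that compactness is precisely what the existence of the strong pullback attractor (or, in the later sections, the pullback asymptotic compactness hypotheses) must guarantee; establishing the strong attraction of $\WWs(X,t)$ from it is the crux, after which minimality again gives $\AAs(t)\subseteq\WWs(X,t)$ and hence the claimed identity.
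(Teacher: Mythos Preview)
Your first inclusion is exactly the paper's, via Lemma~\ref{minimal}. The divergence is in the reverse inclusion.

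You try to show that $\WWd(X,t)$ is itself $\dd$-closed and $\dd$-pullback attracting, and then invoke minimality of $\AAd(t)$ to get $\AAd(t)\subseteq\WWd(X,t)$. In the weak case this is fine: your contradiction argument using $\dw$-compactness of $X$ is essentially Theorem~\ref{weakattracts}, and it goes through.

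The strong case, however, has a genuine gap, and the route you sketch does not close it. The definition of the pullback attractor requires only that $\AAs(t)$ be $\ds$-closed, not $\ds$-compact; so after approximating your $x_n\in P(t,s_n)X$ by $a_n\in\AAs(t)$ with $\ds(x_n,a_n)\to0$, you still cannot extract a $\ds$-convergent subsequence of the $a_n$. The paper's own Example~3.2 (the curve on the unit sphere of $\ell^2(\Z)$) is a case where $\AAs(t)$ exists but is \emph{not} strongly compact: it contains all the basis vectors $e_n$. So the compactness you are counting on is simply not available in general, and showing directly that $\WWs(X,t)$ is strongly attracting from the mere existence of $\AAs(t)$ is circular --- it is exactly what you are trying to prove.

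The paper sidesteps this entirely with a different, topology-agnostic argument. Instead of showing $\WWd(X,t)$ attracts, one supposes $x\in\AAd(t)\setminus\WWd(X,t)$ and observes that, since $x\notin\WWd(X,t)$, there exist $\ee>0$ and $s_0\le t$ with $P(t,s)X\cap B_\bullet(x,\ee)=\emptyset$ for all $s\le s_0$ (otherwise one builds a sequence witnessing $x\in\WWd(X,t)$). Then $\AAd(t)\setminus B_\bullet(x,\ee)$ is a strictly smaller $\dd$-closed family that still $\dd$-pullback attracts $X$, contradicting the minimality of $\AAd(t)$. This argument needs no compactness at all and works verbatim for both $\bullet=\mathrm{s}$ and $\bullet=\mathrm{w}$; it is the missing idea in your strong case.
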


\begin{proof}
By the above lemmas, $\WWd(X,t)\subseteq\AAd(t)$. Now, let $x\in\AAd(t)
\backslash\WWd(X,t)$. Then, there exists $\ee>0$ and $s_0\le t$ so that 
for $s\le s_0$, 
\begin{equation}
\label{empty}
P(t,s)X\cap B_\bullet(x,\ee)=\emptyset.
\end{equation}
Otherwise, for each $n$ and $t-n\le0$ there exists $s_n\le t-n$ with 
\begin{equation*}
x_n\in P(t,s_n)X\cap B_\bullet(x,1/n)\ne\emptyset.
\end{equation*}
But, then $x_n\xrightarrow{\dd}x$, and $x\in\WWd(X,t)$. This is a 
contradiction. Thus, (\ref{empty}) holds. In this case, $\AAd(t)\backslash 
B_\bullet(x,\ee)$ is a strict subset of $\AAd(t)$ which is $\dd$-closed 
$\dd$-pullback attracting. This contradicts the definition of $\AAd(t)$.
\end{proof}

An immediate consequence of Theorem~\ref{exists} and Lemma~\ref{minimal} 
is the following:

\begin{cor}
\label{existsiff}
The pullback attractor $\AAd(t)$ exists if and only if $\WWd(X,t)$ is a $\dd$-
pullback attracting set.
\end{cor}

Next, we study the structure of $\WWw(A,t)$ for some $A\subseteq X$ and $t\in
\R$.

\begin{thm}
\label{weakattracts}
Let $A\subseteq X$ be such that for each $t\in\R$ and $r\le t$, there is some 
$u\in\EE([r,\infty))$ with $u(t)\in A$. Then, $\WWw(A,t)$ is a nonempty, 
weakly compact set. Moreover, $\WWw(A,t)$ weakly pullback attracts $A$.
\end{thm}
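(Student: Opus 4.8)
The plan is to establish the three assertions—nonemptiness, weak compactness, and weak pullback attraction—in turn, using throughout that $X$ is $\dw$-compact and the explicit nested form $\WWw(A,t)=\bigcap_{s\le t}\overline{\bigcup_{r\le s}P(t,r)A}^w$. Weak compactness is the quickest: each $\overline{\bigcup_{r\le s}P(t,r)A}^w$ is weakly closed by construction, so $\WWw(A,t)$ is weakly closed as an intersection of weakly closed sets (this is Lemma~\ref{wproperties}(1)), and a $\dw$-closed subset of the $\dw$-compact space $X$ is itself $\dw$-compact. Thus weak compactness follows as soon as the set is shown to be nonempty.

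For nonemptiness I would first extract the usable form of the hypothesis. Applying it with the role of its ``$t$'' played by an arbitrary $r\in\R$ and the starting time also taken to be $r$ (the boundary case of the inequality $r\le t$), I obtain for every $r\in\R$ a trajectory $u\in\EE([r,\infty))$ with $u(r)\in A$. Hence $u(t)\in P(t,r)A$ for each $t\ge r$, so $P(t,r)A\ne\emptyset$ for all $r\le t$, and therefore $\bigcup_{r\le s}P(t,r)A\ne\emptyset$ for every $s\le t$. Next I would observe that the family $\{\overline{\bigcup_{r\le s}P(t,r)A}^w\}_{s\le t}$ is nested decreasing as $s\to-\infty$, since shrinking $s$ shrinks the index set $\{r\le s\}$. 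Each member is then a nonempty, weakly closed, hence $\dw$-compact, subset of $X$, and any finite subfamily—being totally ordered by inclusion—has a nonempty smallest element. By the finite intersection property in the compact space $(X,\dw)$, the intersection $\WWw(A,t)$ is nonempty.

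For the attraction statement I would argue by contradiction via the sequential characterization of $\WWw(A,t)$. If $\WWw(A,t)$ failed to weakly pullback attract $A$, there would exist $\ee>0$, a sequence $s_n\to-\infty$ with $s_n\le t$, and points $x_n\in P(t,s_n)A$ with $\dw(x_n,\WWw(A,t))\ge\ee$. Since $X$ is $\dw$-compact, a subsequence satisfies $x_{n_k}\xrightarrow{\dw}x$ for some $x\in X$. Because $s_{n_k}\to-\infty$, $s_{n_k}\le t$, and $x_{n_k}\in P(t,s_{n_k})A$, the equivalent description of the pullback $\omega$-limit yields $x\in\WWw(A,t)$. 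Then $\dw(x_{n_k},\WWw(A,t))\le\dw(x_{n_k},x)\to0$, contradicting $\dw(x_{n_k},\WWw(A,t))\ge\ee$. This establishes weak pullback attraction of $A$.

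The main obstacle is the bookkeeping around the hypothesis rather than any deep estimate: its wording places the trajectory in $A$ at time $t$, so I must reapply it at the starting time (setting $r$ equal to the hypothesis time) to produce trajectories genuinely emanating from $A$, which is what is needed to populate $P(t,r)A$. Once that reading is fixed, the only point requiring care is setting up the finite intersection argument with the correct nesting direction and invoking $\dw$-compactness of $X$ at each stage; the attraction part is then routine given the sequential characterization.
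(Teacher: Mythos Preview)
Your proof is correct and, for nonemptiness and weak compactness, essentially identical to the paper's: the paper invokes Cantor's intersection theorem on the nested family $W(s)=\overline{\bigcup_{r\le s}P(t,r)A}^{\mathrm{w}}$, which is exactly your finite-intersection-property argument. Your careful unpacking of the hypothesis (applying it at $t=r$ to get a trajectory with $u(r)\in A$, hence $P(t,r)A\ne\emptyset$) is more explicit than the paper, which simply asserts $P(t,r)A\ne\emptyset$ without comment.

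For the attraction step there is a minor but genuine difference in tactics. The paper sets up a second nested family $K_n=\overline{\bigcup_{r\le s_n}P(t,r)A}^{\mathrm{w}}\cap B_{\mathrm{w}}(\WWw(A,t),\ee)^c$ of nonempty weakly compact sets and reapplies Cantor's intersection theorem to produce a point lying both in $\WWw(A,t)$ and outside its $\ee$-neighborhood. You instead extract a $\dw$-convergent subsequence of the offending points $x_n$ directly, and use the sequential characterization of $\WWw(A,t)$ to place the limit inside it. Your route is slightly more direct and uses only sequential compactness; the paper's stays purely set-theoretic and avoids subsequences. Both are equally valid.
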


\begin{proof}
Due to the assumptions on $A$, we have that $P(t,r)A\ne\emptyset$. Also, due 
to the fact that $X$ is weakly compact, we have that 
\begin{equation*}
W(s):=\overline{\bigcup_{r\le s}P(t,r)A}^\mathrm{w}
\end{equation*}
is nonempty and weakly compact for each $s\le t$. Moreover, for $s_0\le s_1\le 
t$, $W(s_0)\subset W(s_1)$. Thus, by Cantor's intersection theorem,
\begin{equation*}
\WWw(A,t)=\bigcap_{s\le t}W(s)
\end{equation*}
is a nonempty weakly compact set.

To see that $\WWw(A,t)$ weakly pullback attracts $A$, suppose for 
contradiction that it doesn't. Then, there exists some $\ee>0$ and a sequence 
$s_n\rightarrow-\infty$, $s_n\le t$ with 
\begin{equation*}
P(t,s_n)A\cap B_\mathrm{w}(\WWw(A,t),\ee)^c\ne\emptyset.
\end{equation*}
Therefore,
\begin{equation*}
K_n:=\overline{\bigcup_{r\le s_n}P(t,r)A}^\mathrm{w}\cap B_\mathrm{w}(\WWw(A,t),
\ee)^c\ne\emptyset.
\end{equation*}
Passing to a subsequence if necessary and reindexing, we can assume that the 
$s_n$'s are monotonically decreasing. Thus, we get a decreasing sequence of 
nonempty weakly compact sets. Again, by Cantor's intersection theorem, we have 
that $x\in\cap_n K_n\ne\emptyset$. That is, 
\begin{equation*}
x\in\bigcap_{s_n\le t}\overline{\bigcup_{r\le s_n}P(t,r)A}^\mathrm{w}=
\WWw(A,t)
\end{equation*}
This contradicts the definition of the $K_n$'s.
\end{proof}

Using the above results, we have the following:

\begin{thm}
\label{weakattractorexists}
Every generalized evolutionary system possesses a weak pullback attractor 
$\AAw(t)$. Moreover, if the strong pullback attractor $\AAs(t)$ exists, then 
$\overline{\AAs(t)}^\mathrm{w}=\AAw(t)$.
\end{thm}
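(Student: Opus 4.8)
The plan is to establish the two assertions separately, beginning with the existence of the weak pullback attractor. For existence, I would apply Theorem~\ref{weakattracts} with $A = X$. The hypothesis of that theorem requires that for each $t \in \R$ and each $r \le t$ there is some $u \in \EE([r,\infty))$ with $u(t) \in X$; since condition (1) of Definition~\ref{GES} guarantees $\EE([r,\infty)) \ne \emptyset$ and every trajectory takes values in $X$ by construction, this hypothesis is automatic. Thus Theorem~\ref{weakattracts} yields that $\WWw(X,t)$ is nonempty, weakly compact, and weakly pullback attracts $X$. Since $\WWw(X,t)$ is weakly compact it is in particular weakly closed, so by Corollary~\ref{existsiff} the weak pullback attractor exists and, by Theorem~\ref{exists}, equals $\WWw(X,t)$. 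This settles the first claim.

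For the second assertion, suppose the strong pullback attractor $\AAs(t)$ exists; I must show $\overline{\AAs(t)}^{\mathrm{w}} = \AAw(t)$. By Theorem~\ref{exists} applied in the strong topology, $\AAs(t) = \WWs(X,t)$, and in the weak topology $\AAw(t) = \WWw(X,t)$. So the goal reduces to proving $\overline{\WWs(X,t)}^{\mathrm{w}} = \WWw(X,t)$. The inclusion $\WWs(X,t) \subseteq \WWw(X,t)$ is exactly part (2) of Lemma~\ref{wproperties}, and since $\WWw(X,t)$ is weakly closed (being weakly compact), taking weak closures gives $\overline{\WWs(X,t)}^{\mathrm{w}} \subseteq \WWw(X,t)$.

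The reverse inclusion is where the real content lies. The clean route is to invoke the minimality built into the definition of the pullback attractor. The set $\overline{\AAs(t)}^{\mathrm{w}}$ is weakly closed by construction. I claim it is also weakly pullback attracting: since $\AAs(t)$ strongly pullback attracts $X$, for any $\ee > 0$ there is $s_0 < t$ so that $P(t,s)X \subseteq B_{\mathrm{s}}(\AAs(t),\ee)$ for all $s \le s_0$, and by property (2) of the weak metric (strong convergence implies weak convergence), strong $\ee$-neighborhoods are contained in weak $\delta$-neighborhoods for suitable $\delta$; hence $P(t,s)X \subseteq B_{\mathrm{w}}(\overline{\AAs(t)}^{\mathrm{w}}, \delta)$, so $\overline{\AAs(t)}^{\mathrm{w}}$ is weakly pullback attracting. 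Since $\AAw(t)$ is the \emph{minimal} weakly closed, weakly pullback attracting family, we conclude $\AAw(t) \subseteq \overline{\AAs(t)}^{\mathrm{w}}$.

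The main obstacle is the quantitative comparison of strong and weak neighborhoods in the attraction argument: property (2) of the weak metric is stated only as a sequential implication ($\ds(u_n,v_n) \to 0 \Rightarrow \dw(u_n,v_n) \to 0$), so I would need to argue that a strong $\ee$-neighborhood lies inside a weak $\delta$-neighborhood with $\delta \to 0$ as $\ee \to 0$, or else recast the attraction statement directly in sequential terms to match the definitions used in Lemma~\ref{minimal}. Alternatively, one can bypass this by combining $\overline{\WWs(X,t)}^{\mathrm{w}} \subseteq \WWw(X,t)$ with Lemma~\ref{minimal} applied to the weakly closed weakly attracting family $\overline{\AAs(t)}^{\mathrm{w}}$, which forces $\WWw(X,t) \subseteq \overline{\AAs(t)}^{\mathrm{w}}$ and closes the loop. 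I expect the minimality argument via Lemma~\ref{minimal} to be the most economical path, avoiding any delicate $\ee$--$\delta$ estimate.
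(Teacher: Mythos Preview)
Your proposal is correct and follows essentially the same route as the paper. For existence you both apply Theorem~\ref{weakattracts} to $A=X$ and invoke Corollary~\ref{existsiff}/Theorem~\ref{exists}; for the second assertion you both obtain $\overline{\AAs(t)}^{\mathrm{w}}\subseteq\AAw(t)$ from Lemma~\ref{wproperties}(2) and the reverse inclusion from Lemma~\ref{minimal} once $\overline{\AAs(t)}^{\mathrm{w}}$ is shown to be weakly pullback attracting.

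One remark: your worry about the $\ee$--$\delta$ comparison is overcautious. Property~(2) of the weak metric (the sequential implication $\ds(u_n,v_n)\to0\Rightarrow\dw(u_n,v_n)\to0$) is in fact equivalent to the uniform statement ``for every $\delta>0$ there is $\ee>0$ such that $\ds(u,v)<\ee$ implies $\dw(u,v)<\delta$'' (negate and extract a bad sequence), so your first argument would go through. The paper nevertheless opts for the sequential contradiction you list as your alternative: assume $\AAs(t)$ fails to weakly attract, pick $x_n\in P(t,s_n)X\setminus B_{\mathrm{w}}(\AAs(t),\ee)$, use strong attraction to find $a_n\in\AAs(t)$ with $\ds(x_n,a_n)\to0$, hence $\dw(x_n,a_n)\to0$, contradiction. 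Either packaging is fine.
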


\begin{proof}
Due to Theorem~\ref{weakattracts}, $\WWw(X,t)$ is a non-empty weakly closed, 
weakly pullback attracting set. Therefore, by Theorems~\ref{existsiff} and 
\ref{exists}, $\AAw(t)=\WWw(X,t)$ is the weak pullback attractor.

Now, suppose the strong pullback attractor $\AAs(t)$ exists. Then, by 
Theorem~\ref{exists}, $\AAs(t)=\WWs(X,t)$. Then, since $\AAs(t)$ 
strongly pullback attracts $X$, we have that $\AAs(t)$ must weakly pullback 
attract $X$. If not, there is some $\ee>0$ and a sequence $s_n\rightarrow
-\infty$ with 
\begin{equation*}
x_n\in P(t,s_n)X\cap B_\mathrm{w}(\AAs(t),\ee)^c\ne\emptyset.
\end{equation*}
Since $\AAs(t)$ is strongly pullback attracting, there is a sequence $a_n\in 
\AAs(t)$ with $\ds(x_n,a_n)\rightarrow0$. But, then $\dw(x_n,a_n)\rightarrow0$. 
Then, for $n$ sufficiently large, $x_n\in B_\mathrm{w}(\AAs(t),\ee)$ which 
contradicts the choice of $x_n$. In addition, $\overline{\AAs(t)}^\mathrm{w}$ 
is weakly closed and weakly pullback attracting. thus, by Lemma~\ref{minimal}, 
$\AAw(t)=\WWw(X,t)\subseteq \overline{\AAs(t)}^\mathrm{w}$. Finally, by 
Lemma~\ref{wproperties}, $\overline{\AAs(t)}^\mathrm{w}=
\overline{\WWs(X,t)}^\mathrm{w}\subseteq\WWw(X,t)=\AAw(t)$.
\end{proof}

\section{Examples}
\label{examples}

\subsection{A Single Trajectory}
For our first example, let our generalized evolutionary system on an arbitrary 
phase space $X$ consist of a single trajectory $u\in\EE((-\infty,\infty))$ and 
all of its restrictions, $\EE([s,\infty))=\{u|_{[s,\infty)}\}$. Then, we have that 
\begin{equation*}
P(t,s)X=\{u(t)\}
\end{equation*}
is a single point. Therefore, we have that the strong and weak pullback 
attractors both exist. Moreover, 
\begin{equation*}
\AAs(t)=\AAw(t)=\{u(t)\}.
\end{equation*}

\subsection{An Abstract Example on $\ell^2(\Z)$}
For our second example, let $X$ be the unit ball in $\ell^2(\Z)$. Let the 
strong metric on $\ell^2(\Z)$ be the metric induced by the norm on 
$\ell^2(\Z)$. That is, given by 
\begin{equation*}
\ds(x,y)=\norm{x-y}_{\ell^2(\Z)}=\sqrt{\sum_{n\in\Z}(x_n-y_n)^2}.
\end{equation*}
In a similar fashion, $X$ with the weak topology is metrizable using the weak 
metric 
\begin{equation*}
\dw(x,y)=\sum_{k\in\Z}\frac{1}{2^{\abs{k}}}
                  \frac{\abs{x_k-y_k}}{1+\abs{x_k-y_k}}.
\end{equation*}
Now, consider the following complete trajectory on $X$: Let $n\in\Z$. Then, 
let 
\begin{equation*}
u(t):=\frac{(1+n-t)e_n+(t-n)e_{n+1}}
        {\norm{(1+n-t)e_n+(t-n)e_{n+1}}_{\ell^2(\Z)}}
\end{equation*} 
for $t\in[n,n+1]$ and $\{e_n\}$ the standard basis vectors in 
$\ell^2(\infty)$. That is, we interpolate between the basis vectors and 
normalize onto the boundary of the unit ball. Next, let $\EE((-\infty,\infty))
:=\{u(\cdot+r):r\in\R\}$. That is, the above complete trajectory and all of 
its shifts. As in the previous example, we complete our definition of a 
generalized evolutionary system by letting $\EE([s,\infty)):=\{u|_{[s,\infty)}
:u\in\EE((-\infty,\infty))\}$. As we will see in Section~\ref{relate}, this 
turns $\EE$ into an autonomous evolutionary system. Therefore, we find that 
\begin{equation*}
P(t,s)X=\{u(r):r\in\R\}.
\end{equation*}
This is strongly closed but not weakly closed. Thus, we find that 
\begin{equation*}
\AAs(t)=\{u(r):r\in\R\} \quad\mathrm{and}\quad \AAw(t)=
\overline{\AAs(t)}^w=\{u(r):r\in\R\}\cup\{0\}
\end{equation*}
for each $t\in\R$. In particular, the weak and strong pullback attractors are 
not equal.

\subsection{The Heat Equation}
For our next example, let $X$ be the unit ball in $L^2(\R)$. The strong metric 
on $X$ is the metric induced by the norm on $L^2(\R)$. That is, for any $f,g\in
L^2(\R)$,
\begin{equation*}
\ds(f,g):=\norm{f-g}_{L^2(\R)}=\left(\int_\R\abs{f(x)-g(x)}^2\ddx\right)^{1/2}.
\end{equation*}
For the weak metric, we first choose any countable dense subset $\phi_n$ for 
$L^2(\R)$ for $n\in\N$. For example, one could use wavelets as an orthonormal 
basis, as is explained in \cite{L02}. Then, the weak metric on $X$ is given by 
\begin{equation*}
\dw(f,g):=\sum_{k\in\N}\frac{1}{2^k}
  \frac{\abs{\ip{f}{\phi_k}-\ip{g}{\phi_k}}}
        {1+\abs{\ip{f}{\phi_k}-\ip{g}{\phi_k}}}.
\end{equation*}
Now, consider the heat equation on $X$. That is, for some starting time $s\in
\R$, 
\begin{equation}
\label{heateqn}
\left\{
\begin{array}{l}
u_t=u_{xx} \\
u(s)=f(x)
\end{array}
\right.
\end{equation}
for some $f\in X$. Then, using the Fourier transform, 
\begin{equation*}
\hat{f}(\xi):=\int_\R f(x)e^{ix\cdot\xi}\ddx,
\end{equation*}
we find that a solution to (\ref{heateqn}) is given by 
\begin{equation}
\label{heatformula}
\hat{u}(\xi,t)=e^{\xi^2(s-t)}\hat{f}(\xi).
\end{equation}
Note that by Plancherel's theorem, we may work exclusively in Fourier space. 
Define a generalized evolutionary system on $X$ via 
\begin{equation*}
\EE([s,\infty)):=\{u:u\mathrm{~is~a~solution~to~}(\ref{heateqn})\}.
\end{equation*}
We will see that the weak pullback attractor $\AAw(t)$ is given by the single 
point $\{0\}$ for each $t$. On the other hand, the strong pullback attractor 
$\AAs(t)$ does not exist. 

To see this, we first note that for fixed $t\in\R$, $\norm{u(t)}_{L^2(\R)}
\rightarrow0$ as $s\rightarrow-\infty$. This gives us the candidate weak and 
strong pullback attractor $\{0\}$. In the weak metric, this is the pullback 
attractor. On the other hand, by Theorem~\ref{weakattractorexists}, if the 
strong pullback attractor exists, it must be the case that $\AAw(t)=
\overline{\AAs(t)}^w=\{0\}$. So, the only possibility for the strong pullback 
attractor is $\AAs(t)=\{0\}$. However, we fail to have uniform convergence in 
the strong metric. By definition, if $\{0\}$ was the strong pullback attractor, 
then, for any $\ee>0$, there is an $s_0\le t$ with 
\begin{equation*}
P(t,s)X\subseteq B_s(\{0\},\ee)
\end{equation*}
for each $s\le s_0$. So, let $\ee:=1/2$ and let $s_0\le t$ be given. Then, 
consider $f\in L^2(\R)$ with $\norm{f}_{L^2(\R)}=1$ and $\supp(\hat{f})
\subseteq\{\xi:2^{j-1}\le\abs{\xi}\le2^{j+1}\}$ for some $j\in\Z$ to be 
determined later. Then, we see that 
\begin{align*}
\norm{u(\xi,t)}_{L^2(\R)}^2
 &=   \int_\R e^{2\xi^2(s_0-t)}\hat{f}(\xi)^2\dxi \\
 &=   \int_{2^{j-1}\le\abs{\xi}\le2^{j+1}}
         \exp(2\xi^2(s_0-t))\hat{f}(\xi)^2\dxi \\
 &\ge \exp(2\cdot2^{2j-2}(s_0-t))
         \int_{2^{j-1}\le\abs{\xi}\le2^{j+1}}\hat{f}(\xi)^2\dxi \\
 &=   \exp(2^{2j-1}(s_0-t))\norm{f}_{L^2(\R)}^2 \\
 &=   \exp(2^{2j-1}(s_0-t)).
\end{align*}
Therefore, we find that 
\begin{equation*}
\norm{u(\xi,t)}_{L^2(\R)}\ge\exp(2^{2j-2}(s_0-t)).
\end{equation*}
This is greater than or equal to $\ee:=1/2$ provided that 
\begin{equation*}
j\le \frac{1}{2}\left(\log_2\left(\frac{\ln(2)}{t-s_0}\right)+2\right).
\end{equation*}
Therefore, the convergence to $0$ is not uniform, and no strong pullback 
attractor exists.

\subsection{A Phase Space that is not Weakly Compact}
Finally, a simple example showing the importance of the compactness of $X$ in 
the weak topology. Let $X:=\R$ with the weak and strong metrics both given by 
$\ds(x,y):=\dw(x,y):=\abs{x-y}$ for any $x,y\in\R$. For each $s\in\R$, define
\begin{equation*}
\EE([s,\infty)):=\{u(t):=t-s\}.
\end{equation*}
Then, we have that for some $t\in\R$ and some $s\le t$
\begin{equation*}
P(t,s)X=\{u(t):u(s)\in X,u\in\EE([s,\infty))\}=\{t-s\}.
\end{equation*}
But, as $s\rightarrow-\infty$, the limit does not exist. Thus, the weak and 
strong pullback attractors do not exist.

\section{Existence of a Strong Pullback Attractor}
\label{strongPAC}

\begin{define}
A generalized evolutionary system is pullback asymptotically compact if for 
any $t\in\R$, $s_n\rightarrow-\infty$ with $s_n\le t$, and any $x_n\in P(t,s_n)
X$, the sequence $\{x_n\}$ is relatively strongly compact.
\end{define}

\begin{thm}
\label{strong=weak}
Let $\EE$ be pullback asymptotically compact. Let $A\subseteq X$ be so that 
for each $t\in\R$ and $r\le t$, there is some $u\in\EE([r,\infty))$ with $u(t)
\in A$. Then, $\WWs(A,t)$ is a nonempty strongly compact set which strongly 
pullback attracts $A$. Moreover, $\WWs(A,t)=\WWw(A,t)$.
\end{thm}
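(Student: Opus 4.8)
The plan is to treat pullback asymptotic compactness as the single engine behind all four assertions: it upgrades the weak convergence underlying $\WWw(A,t)$ to strong convergence via compactness, which is precisely what collapses the weak and strong pullback $\WW$-limits onto each other. Throughout I would use the sequential description that $x\in\WWd(A,t)$ exactly when there exist $s_n\rightarrow-\infty$ with $s_n\le t$ and $x_n\in P(t,s_n)A$ with $x_n\xrightarrow{\dd}x$, the inclusion $P(t,s_n)A\subseteq P(t,s_n)X$ coming from $A\subseteq X$, the nonemptiness $P(t,r)A\ne\emptyset$ established in the proof of Theorem~\ref{weakattracts}, and the facts from Lemma~\ref{wproperties} that $\WWs(A,t)$ is strongly closed and contained in $\WWw(A,t)$.

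Nonemptiness comes first and is immediate: choosing any $s_n\rightarrow-\infty$ with $s_n\le t$ and any $x_n\in P(t,s_n)A$, pullback asymptotic compactness yields a strongly convergent subsequence $x_{n_k}\xrightarrow{\ds}x$, and the sequential description places $x\in\WWs(A,t)$. For strong pullback attraction I would argue by contradiction: a failure produces $\ee>0$, a sequence $s_n\rightarrow-\infty$, and $x_n\in P(t,s_n)A$ with $\ds(x_n,\WWs(A,t))\ge\ee$; extracting a strong limit $x_{n_k}\xrightarrow{\ds}x\in\WWs(A,t)$ gives $\ds(x_{n_k},\WWs(A,t))\le\ds(x_{n_k},x)\rightarrow0$, the desired contradiction.

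For strong compactness, since $\WWs(A,t)$ is strongly closed by Lemma~\ref{wproperties}, it suffices to prove sequential strong compactness. Given $y_m\in\WWs(A,t)$, I would invoke the sequential description to select, for each $m$, a time $s_m\le\min\{t,-m\}$ and a point $x_m\in P(t,s_m)A$ with $\ds(x_m,y_m)<1/m$; since $s_m\rightarrow-\infty$, pullback asymptotic compactness furnishes $x_{m_k}\xrightarrow{\ds}x\in\WWs(A,t)$, and then $\ds(y_{m_k},x)\le 1/m_k+\ds(x_{m_k},x)\rightarrow0$. This diagonal-type selection, which forces each $y_m$ to be approximated by a trajectory value at a time receding to $-\infty$, is the most delicate point of the argument, though it follows directly from the definition of $\WWs$.

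Finally, the equality $\WWs(A,t)=\WWw(A,t)$. One inclusion, $\WWs(A,t)\subseteq\WWw(A,t)$, is part of Lemma~\ref{wproperties}. For the reverse, take $x\in\WWw(A,t)$ realized by $s_n\rightarrow-\infty$ and $x_n\in P(t,s_n)A$ with $x_n\xrightarrow{\dw}x$; pullback asymptotic compactness extracts $x_{n_k}\xrightarrow{\ds}y$, which forces $x_{n_k}\xrightarrow{\dw}y$ by property (2) of the weak metric, so uniqueness of weak limits gives $y=x$ and hence $x_{n_k}\xrightarrow{\ds}x$, placing $x\in\WWs(A,t)$.
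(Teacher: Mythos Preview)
Your proof is correct and follows essentially the same approach as the paper: both use pullback asymptotic compactness to extract strong limits from sequences $x_n\in P(t,s_n)A$, argue strong attraction by contradiction, establish compactness via a diagonal selection approximating each $y_m\in\WWs(A,t)$ by a point $x_m\in P(t,s_m)A$ with $s_m\rightarrow-\infty$, and prove $\WWs=\WWw$ by upgrading weak convergence to strong via compactness and uniqueness of weak limits. The only cosmetic difference is ordering: the paper first shows that $\WWw(A,t)$ strongly pullback attracts $A$ and then proves $\WWs=\WWw$, whereas you work directly with $\WWs(A,t)$ throughout and save the identification for last.
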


\begin{proof}
By Theorem~\ref{exists}, our assumptions on $A$ imply that $\WWw(A,t)\ne
\emptyset$. We will see that $\WWw(A,t)$ strongly pullback attracts $A$. 
Suppose it does not. Then, there is some $\ee>0$ and a sequence $s_n\rightarrow
-\infty$ with 
\begin{equation*}
x_n\in P(t,s_n)A\cap B_\mathrm{s}(\WWw(A,t),\ee)^c\ne\emptyset.
\end{equation*}
Since $X$ is pullback asymptotically compact, this sequence has a convergent 
subsequence. After passing to a subsequence and dropping a subindex, we have 
that $x_n\xrightarrow{\ds}x$. But then, $x_n\xrightarrow{\dw}x$. Therefore, by 
the equivalent definition of $\WWw(A,t)$, $x\in \WWw(A,t)$. However, for large 
enough $n$, we must then have $x_n\in B_\mathrm{s}(\WWw(A,t),\ee)^c$ which 
contradicts our choice of $x_n$.

By Lemma~\ref{wproperties}, $\WWs(A,t)\subseteq\WWw(A,t)$. For the other 
inclusion, let $x\in\WWw(A,t)$. By the equivalent definition of $\WWw(A,t)$, 
there are sequences $s_n\rightarrow-\infty$ with $s_n\le t$ and $x_n\in P(t,
s_n)A$ so that $x_n\xrightarrow{\dw}x$. By pullback asymptotic compactness, 
there is a subsequence $\{x_{n_k}\}$ with $x_{n_k}\xrightarrow{\ds}y$ for 
some $y\in X$. But then, $x_{n_k}\xrightarrow{\dw}y$ which gives us that 
$x=y$ and thus, $x_n\xrightarrow{\ds}x$. That is, $x\in\WWs(A,t)$ and 
$\WWs(A,t)=\WWw(A,t)$.

Finally, we establish the strong compactness of $\WWs(A,t)$. So, let $\{x_n
\}$ be any sequence in $\WWs(A,t)$. By the equivalent definition of $\WWs(A,
t)$, there is a corresponding sequence $\{s_k^n\}$ for each $x_n$ with 
$s_k^n\rightarrow-\infty$, $s_k^n\le t$, and $x_k^n\in P(t,s_k^n)A$ so that 
$x_k^n\xrightarrow{\ds}x_n$. Letting $y_n$, $x_n$ be the diagonals of these 
families, we have that 
\begin{equation*}
\ds(y_n,x_n)\rightarrow0 \mathrm{~as~}n\rightarrow\infty.
\end{equation*}
By pullback asymptotic compactness, $\{y_n\}$ is relatively strongly compact. 
Hence, $\{x_n\}$ is also relatively strongly compact. Since $\WWs(A,t)$ is 
closed, the limit of this subsequence lies in $\WWs(A,t)$ giving us that 
$\WWs(A,t)$ is compact.
\end{proof}

Using this result, we have the following existence result for strong pullback 
attractors.

\begin{thm}
\label{strongexists}
If a generalized evolutionary system $\EE$ is pullback asymptotically compact, 
then $\AAw(t)$ is a strongly compact strong pullback attractor.
\end{thm}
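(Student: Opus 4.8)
The plan is to reduce everything to Theorem~\ref{strong=weak} applied with the choice $A = X$, and then to read off the conclusion using the characterizations already established.

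First I would verify that $X$ itself meets the hypothesis of Theorem~\ref{strong=weak}: for each $t \in \R$ and $r \le t$ we need a trajectory $u \in \EE([r,\infty))$ with $u(t) \in X$. This is immediate, since $\EE([r,\infty)) \ne \emptyset$ by the first axiom in Definition~\ref{GES}, and every trajectory is by definition $X$-valued, so $u(t) \in X$ automatically. With this hypothesis in hand, Theorem~\ref{strong=weak} gives at once that $\WWs(X,t)$ is a nonempty, strongly compact set which strongly pullback attracts $X$, and moreover that $\WWs(X,t) = \WWw(X,t)$.

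Next I would promote this strong $\WW$-limit to the strong pullback attractor. Since $\WWs(X,t)$ strongly pullback attracts $X$, it is by definition a strongly pullback attracting set, so Corollary~\ref{existsiff} tells us the strong pullback attractor $\AAs(t)$ exists. Theorem~\ref{exists} then identifies it: $\AAs(t) = \WWs(X,t)$. In particular $\AAs(t)$ is strongly compact (being equal to the strongly compact set $\WWs(X,t)$), and its strong closedness, required in Definition~\ref{attractor}, follows either from compactness or from part (1) of Lemma~\ref{wproperties}. Finally I would chain the identifications: by Theorem~\ref{weakattractorexists} the weak pullback attractor satisfies $\AAw(t) = \WWw(X,t)$, and by the equality from Theorem~\ref{strong=weak} we have $\WWw(X,t) = \WWs(X,t) = \AAs(t)$. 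Hence $\AAw(t) = \AAs(t)$ is a strongly compact strong pullback attractor, as claimed.

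As for the main obstacle: the analytic content, namely nonemptiness via Cantor's intersection theorem, strong attraction via the asymptotic compactness argument, and the coincidence $\WWs = \WWw$, is entirely absorbed into Theorem~\ref{strong=weak}, so the present proof is essentially bookkeeping. The only point demanding care is the logical order: one must invoke Corollary~\ref{existsiff} to secure the \emph{existence} of $\AAs(t)$ before appealing to Theorem~\ref{exists} to identify it, since the latter statement is only conditional on existence.
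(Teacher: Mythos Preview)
Your proof is correct and follows essentially the same route as the paper: apply Theorem~\ref{strong=weak} with $A=X$, then invoke Corollary~\ref{existsiff} and Theorem~\ref{exists} to conclude $\AAs(t)=\WWs(X,t)=\WWw(X,t)=\AAw(t)$. Your version is more explicit about checking the hypothesis on $A=X$ and about the logical order of existence before identification, but the argument is the same.
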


\begin{proof}
By Theorem~\ref{strong=weak}, $\WWs(X,t)$ is strongly compact strong pullback 
attracting set with $\WWs(X,t)=\WWw(X,t)=\AAw(t)$, the weak pullback 
attractor. Therefore, by Theorem~\ref{exists} and Corollary~\ref{existsiff}, 
the strong pullback attractor $\AAs(t)$ exists and $\AAs(t)=\WWs(X,t)=\AAw(t)$.
\end{proof}

\section{Invariance and Tracking Properties}
\label{tracking}
Now, we assume that $\EE$ satisfies \iref{A1}. That is,
\begin{center}
\iref{A1}: $\EE([s,\infty))$ is compact in $C([s,\infty);\Xw)$ for each 
$s\in\R$.
\end{center}
Moreover, we introduce the following variation of the mapping $P$: for $A
\subseteq X$ and $s\le t\in\R$
\begin{equation*}
\tilde{P}(t,s)A:=\{u(t):u(s)\in A, u\in\EE((-\infty,\infty))\}.
\end{equation*}

\begin{define}
We say that a family of sets $\BB(t)\subseteq X$ is pullback semi-invariant 
if for each $s\le t\in\R$,
\begin{equation*}
\tilde{P}(t,s)\BB(s)\subseteq\BB(t).
\end{equation*}
We say that $\BB(t)$ is pullback invariant if for $s\le t\in\R$,
\begin{equation*}
\tilde{P}(t,s)\BB(s)=\BB(t).
\end{equation*}
We say that $\BB(t)$ is pullback quasi-invariant if for each $b\in\BB(t)$, 
there is some complete trajectory $u\in\EE((-\infty,\infty))$ with $u(t)=b$ 
and $u(s)\in\BB(s)$ for each $s\le t$.
\end{define}

Note that if $\BB(t)$ is pullback quasi-invariant, then for each $s\le t$, 
\begin{equation}
\label{qinvariance}
\BB(t)\subseteq\tilde{P}(t,s)\BB(s)\subseteq P(t,s)\BB(s).
\end{equation}
Therefore, if $\BB(t)$ is pullback quasi-invariant and pullback 
semi-invariant, then $\BB(t)$ is pullback invariant.

\begin{thm}
\label{a1weakqinvariant}
Let $\EE$ be a generalized evolutionary system satisfying $\iref{A1}$. Then, 
$\WWw(A,t)$ is pullback quasi-invariant for each $A\subseteq X$.
\end{thm}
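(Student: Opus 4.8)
The plan is to take an arbitrary $b\in\WWw(A,t)$ and manufacture a complete trajectory through $b$ using the compactness hypothesis \iref{A1}. By the equivalent sequential characterization of $\WWw(A,t)$, there are sequences $s_n\to-\infty$ with $s_n\le t$ and points $x_n\in P(t,s_n)A$ such that $x_n\xrightarrow{\dw}b$. Unwinding the definition of $P$, each $x_n=u_n(t)$ for some $u_n\in\EE([s_n,\infty))$ with $u_n(s_n)\in A$. After passing to a subsequence I may assume that $s_n$ is monotonically decreasing.

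First I would run a diagonal argument. For each fixed $m$, all but finitely many of the $u_n$ are defined on $[s_m,\infty)$, and their restrictions lie in $\EE([s_m,\infty))$ by property (2) of a generalized evolutionary system. Since $\EE([s_m,\infty))$ is compact in $C([s_m,\infty);\Xw)$ by \iref{A1}, I can extract nested subsequences $S_1\supseteq S_2\supseteq\cdots$ so that along $S_m$ the restrictions converge in $C([s_m,\infty);\Xw)$ to some $v_m\in\EE([s_m,\infty))$. Because restriction to a smaller half-line is continuous and limits are unique, these limits are consistent: $v_m|_{[s_k,\infty)}=v_k$ for $k\le m$. Gluing them produces a single $X$-valued function $u$ on $(-\infty,\infty)$ with $u|_{[s_m,\infty)}=v_m$, well defined since $s_m\to-\infty$. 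Taking the diagonal subsequence $w_j$ (the $j$-th term of $S_j$), I get $w_j\to u$ in $C([s_m,\infty);\Xw)$ for every $m$.

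Next I would verify that $u$ is a complete trajectory and passes through $b$. Given any $T\in\R$, choose $m$ with $s_m\le T$; then $u|_{[T,\infty)}=v_m|_{[T,\infty)}\in\EE([T,\infty))$ by property (2), so $u\in\EE((-\infty,\infty))$ by property (3). For the value at $t$, note that evaluation at the fixed time $t\ge s_m$ is $\dw$-continuous on $C([s_m,\infty);\Xw)$, so $w_j(t)\to u(t)$ weakly; but $\{w_j(t)\}$ is a subsequence of $\{x_n\}$, which converges weakly to $b$, whence $u(t)=b$.

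The remaining point, and the one requiring the most care, is that $u(s)\in\WWw(A,s)$ for each $s\le t$. I would fix such an $s$ and pick $m$ with $s_m\le s$, so that along the diagonal $w_j(s)\to u(s)$ weakly. Each $w_j$ is one of the original trajectories $u_{n(j)}$, which starts at time $s_{n(j)}$ with $u_{n(j)}(s_{n(j)})\in A$; for $j$ large enough that $s_{n(j)}\le s$, this gives $w_j(s)=u_{n(j)}(s)\in P(s,s_{n(j)})A$ with $s_{n(j)}\to-\infty$ and $s_{n(j)}\le s$. This is exactly the sequential characterization of membership in $\WWw(A,s)$, so $u(s)\in\WWw(A,s)$. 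Since $u(t)=b$ and $u(s)\in\WWw(A,s)$ for all $s\le t$, the family $\WWw(A,t)$ is pullback quasi-invariant. The main obstacle throughout is the bookkeeping: ensuring that a single diagonal subsequence simultaneously delivers the limiting trajectory on every half-line, the value $b$ at time $t$, and the membership at every earlier time $s$.
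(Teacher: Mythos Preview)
Your proof is correct and follows essentially the same route as the paper's own argument: extract trajectories $u_n\in\EE([s_n,\infty))$ hitting $x_n$ at time $t$, use \iref{A1} together with the restriction property of $\EE$ to run a nested-subsequence diagonalization producing a complete trajectory $u$, and then read off $u(t)=b$ and $u(s)\in\WWw(A,s)$ from the convergence. Your write-up is, if anything, slightly more explicit about the bookkeeping (verifying $u\in\EE((-\infty,\infty))$ via property (3) and noting that the diagonal indices $n(j)\to\infty$ so that $s_{n(j)}\to-\infty$), but there is no substantive difference in strategy.
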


\begin{proof}
Let $x\in\WWw(A,t)$. Then, there are sequences $s_n\rightarrow-\infty$ with 
$s_n\le t$ and $x_n\in P(t,s_n)A$ so that $x_n\xrightarrow{\dw}x$. Note that 
by passing to a subsequence, we can assume without loss of generality that 
$s_n$ is a monotonically decreasing sequence. Since $x_n\in P(t,s_n)A$, there 
is some $u_n\in\EE([s_n,\infty))$ with $x_n=u_n(t)$, $u_n(s_n)\in A$. Using 
$\iref{A1}$, $\EE([s_n,\infty))$ is compact in $C([s_n,\infty);\Xw)$. 
Moreover, by the definition of $\EE$, 
\begin{equation*}
\{u|_{[s_1,\infty)}:u\in\EE([s_n,\infty))\}\subseteq\EE([s_1,\infty)).
\end{equation*}
Thus, using compactness on $\{u_n|_{[s_1,\infty)}\}$, we can pass to a 
subsequence and drop a subindex obtaining $u^1\in\EE([s_n,\infty))$ so that 
\begin{equation*}
u_n|_{[s_1,\infty)}\rightarrow u^1 \mathrm{~in~}C([s_1,\infty);\Xw).
\end{equation*}
Repeating the argument above with our subsequence, we can find another 
subsequence which, after dropping another subindex, gives us some $u^2\in\EE
([s_2,\infty))$ with 
\begin{equation*}
u_n|_{[s_2,\infty)}\rightarrow u^2 \mathrm{~in~}C([s_2,\infty);\Xw).
\end{equation*}
Note that, by construction, $u^2|_{[s_1,\infty)}=u^1$. Continuing, 
inductively, we get $u^k\in\EE([s_k,\infty))$ with 
\begin{equation*}
u_n|_{[s_k,\infty)}\rightarrow u^k \mathrm{~in~}C([s_k,\infty);\Xw).
\end{equation*}
and $u^k|_{[s_{k-1},\infty)}=u^{k-1}$. A standard diagonalization process 
gives us some subsequence of $u_n$ and $u\in\EE((-\infty,\infty))$ so that 
$u|_{[-T,\infty)}\in\EE([-T,\infty))$ and $u_n\rightarrow u$ in $C([-T,\infty);
\Xw)$ for any $T>0$. That is, $u\in\EE((-\infty,\infty))$, by definition. 

Note that $u(t)=x$, by construction. Now, let $s\le t$. Then, $u_n(s)
\xrightarrow{\dw}u(s)$. By definition, since $u_n(s_n)\in A$, we then get that 
$u_n(s)\in P(s,s_n)A$ for $s_n\le s$ ($n$ sufficiently large). Hence, $u(s)\in 
\WWw(A,s)$ and $\WWw(A,t)$ is pullback quasi-invariant.
\end{proof}

This characterization of $\WWw(A,t)$ gives us the following important 
consequences.

\begin{cor}
\label{a1weak=strong}
Let $\EE$ be a generalized evolutionary system satisfying $\iref{A1}$. Let $A
\subseteq X$ be such that $\WWw(A,t)\subseteq A$ for each $t\in\R$. Then, 
$\WWw(A,t)=\WWs(A,t)$.
\end{cor}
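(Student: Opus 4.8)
The plan is to exploit the quasi-invariance of the weak omega-limit established in Theorem~\ref{a1weakqinvariant} together with the hypothesis $\WWw(A,t)\subseteq A$. Since Lemma~\ref{wproperties} already gives the inclusion $\WWs(A,t)\subseteq\WWw(A,t)$ for free, the entire content of the corollary is the reverse inclusion $\WWw(A,t)\subseteq\WWs(A,t)$, and I would prove this pointwise.

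First I would fix $x\in\WWw(A,t)$. By Theorem~\ref{a1weakqinvariant}, $\WWw(A,t)$ is pullback quasi-invariant, so there is a complete trajectory $u\in\EE((-\infty,\infty))$ with $u(t)=x$ and $u(s)\in\WWw(A,s)$ for every $s\le t$. This is the single place where the hypothesis enters: since $\WWw(A,s)\subseteq A$ for each $s$, the trajectory $u$ in fact satisfies $u(s)\in A$ for all $s\le t$.

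Next I would turn this into membership of $x$ in the strong omega-limit. Because $u$ is complete, each restriction $u|_{[s,\infty)}$ lies in $\EE([s,\infty))$; combined with $u(s)\in A$, the definition of $P$ yields $x=u(t)\in P(t,s)A$ for every $s\le t$. Now choosing any sequence $s_n\rightarrow-\infty$ with $s_n\le t$ and taking the constant sequence $x_n:=x\in P(t,s_n)A$, we have $x_n\xrightarrow{\ds}x$ trivially, so by the equivalent (sequential) description of $\WWs(A,t)$ we conclude $x\in\WWs(A,t)$.

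Since no further limiting or compactness arguments are needed beyond what Theorem~\ref{a1weakqinvariant} already supplies, I do not anticipate a genuine obstacle; the only subtlety to get right is the direction of the inclusion $\WWw(A,s)\subseteq A$ and the observation that a complete trajectory whose past values remain in $A$ exhibits $x$ as an element of \emph{every} $P(t,s)A$, which makes $x$ strongly reachable by the degenerate constant sequence. In effect, the real work has all been front-loaded into the quasi-invariance theorem, and this corollary is its clean payoff.
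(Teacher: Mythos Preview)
Your proof is correct and follows essentially the same approach as the paper: both use Theorem~\ref{a1weakqinvariant} to obtain quasi-invariance, combine it with the hypothesis $\WWw(A,s)\subseteq A$ to get $\WWw(A,t)\subseteq P(t,s)A$ for every $s\le t$, and then conclude $\WWw(A,t)\subseteq\WWs(A,t)$, with the reverse inclusion coming from Lemma~\ref{wproperties}. The only cosmetic difference is that the paper phrases the key step at the set level via the inclusion~(\ref{qinvariance}), whereas you unwind it pointwise with an explicit complete trajectory and a constant sequence.
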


\begin{proof}
By Theorem~\ref{a1weakqinvariant}, we have that $\WWw(A,t)$ is pullback 
quasi-invariant. Thus, by (\ref{qinvariance}), we have that $\WWw(A,t)
\subseteq P(t,s)\WWw(A,s)$ for each $s\le t$. By assumption, $\WWw(A,s)
\subseteq A$, thus $\WWw(A,t)\subseteq P(t,s)A$ for each $s\le t$. Therefore, 
$\WWw(A,t)\subseteq\WWs(A,t)$. On the other hand, by Lemma~\ref{wproperties}, 
$\WWs(A,t)\subseteq\WWw(A,t)$. That is, $\WWw(A,t)=\WWs(A,t)$.
\end{proof}

The following result is a direct result of Corollary~\ref{a1weak=strong} and 
Theorem~\ref{exists}.

\begin{cor}
Let $\EE$ be a generalized evolutionary system satisfying $\iref{A1}$. Then, 
if the strong pullback attractor $\AAs(t)$ exists, $\AAs(t)=\AAw(t)$, the weak 
pullback attractor.
\end{cor}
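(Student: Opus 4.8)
The plan is to chain together the two cited results through the special choice $A=X$. First, I would invoke Theorem~\ref{exists}: since we are assuming the strong pullback attractor $\AAs(t)$ exists, that theorem identifies it explicitly as the strong pullback omega-limit of the whole phase space, namely $\AAs(t)=\WWs(X,t)$. On the weak side there is no existence hypothesis to worry about, because Theorem~\ref{weakattractorexists} guarantees that every generalized evolutionary system has a weak pullback attractor, and another application of Theorem~\ref{exists} gives $\AAw(t)=\WWw(X,t)$. So after this first step the claim $\AAs(t)=\AAw(t)$ has been reduced to the single equality $\WWs(X,t)=\WWw(X,t)$.

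Next, I would bridge these two omega-limits using Corollary~\ref{a1weak=strong}, which is exactly the statement that $\WWw(A,t)=\WWs(A,t)$ whenever $\iref{A1}$ holds and $A$ satisfies $\WWw(A,t)\subseteq A$ for all $t\in\R$. The hypothesis $\iref{A1}$ is in force throughout this section, so the only thing to check is the containment condition, and here the choice $A=X$ makes it immediate: $\WWw(X,t)$ is by its very definition a subset of $X$, so $\WWw(X,t)\subseteq X$ holds trivially for every $t$. Applying the corollary with $A=X$ therefore yields $\WWw(X,t)=\WWs(X,t)$ with no further argument.

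Combining the two steps gives the chain $\AAs(t)=\WWs(X,t)=\WWw(X,t)=\AAw(t)$, which is the desired conclusion. I do not expect any genuine obstacle here, since the result is purely a matter of specializing Corollary~\ref{a1weak=strong} to $A=X$ and reading off the attractor identifications from Theorem~\ref{exists}; the one point worth stating explicitly, rather than a difficulty, is the verification that $A=X$ meets the containment hypothesis of the corollary, which is automatic.
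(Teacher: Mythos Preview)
Your proof is correct and follows exactly the route the paper intends: the paper states the corollary as ``a direct result of Corollary~\ref{a1weak=strong} and Theorem~\ref{exists},'' and your specialization to $A=X$ (with the trivial verification $\WWw(X,t)\subseteq X$) together with the attractor identifications is precisely how those two results combine. Your additional invocation of Theorem~\ref{weakattractorexists} to ensure the weak attractor exists is a reasonable clarification that the paper leaves implicit.
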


In fact, we get a new characterization of pullback invariance for a weakly 
closed set $A$.

\begin{thm}
\label{inviffqinvandsinv}
Let $\EE$ be a generalized evolutionary system satisfying $\iref{A1}$. Then, 
for a family of weakly closed subsets $\BB(t)\subseteq X$, $\BB(t)$ is 
pullback invariant if and only if $\BB(t)$ is pullback semi-invariant and 
pullback quasi-invariant.
\end{thm}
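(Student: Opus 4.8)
The reverse implication is already recorded for us: the discussion surrounding (\ref{qinvariance}) shows that pullback quasi-invariance together with pullback semi-invariance yields pullback invariance. Thus the plan is to prove only the forward implication, that pullback invariance forces both of the other two properties. Pullback semi-invariance will be immediate, since the equality $\tilde{P}(t,s)\BB(s)=\BB(t)$ in particular contains the inclusion $\tilde{P}(t,s)\BB(s)\subseteq\BB(t)$. All the real work therefore goes into producing, for a given $b\in\BB(t)$, a single complete trajectory $u$ with $u(t)=b$ and $u(s)\in\BB(s)$ for every $s\le t$.

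To build this trajectory, I would fix a sequence $s_n\rightarrow-\infty$ with $s_n\le t$. For each $n$, invariance gives $b\in\BB(t)=\tilde{P}(t,s_n)\BB(s_n)$, so there is a complete trajectory $u_n\in\EE((-\infty,\infty))$ with $u_n(t)=b$ and $u_n(s_n)\in\BB(s_n)$. Since each $u_n$ is complete, its restriction to $[-T,\infty)$ lies in $\EE([-T,\infty))$, which by \iref{A1} is compact in $C([-T,\infty);\Xw)$. I would then run exactly the diagonalization of Theorem~\ref{a1weakqinvariant}: extract nested subsequences converging in $C([-1,\infty);\Xw)$, then in $C([-2,\infty);\Xw)$, and so on, and pass to the diagonal to obtain $u\in\EE((-\infty,\infty))$ with $u_n\rightarrow u$ in $C([-T,\infty);\Xw)$ for every $T>0$. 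In particular $u_n(r)\xrightarrow{\dw}u(r)$ for each $r\in\R$, so from $u_n(t)=b$ we immediately get $u(t)=b$.

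It remains to check $u(s)\in\BB(s)$ for fixed $s\le t$, and this is the one place where the full strength of invariance (rather than mere quasi-invariance) is needed. The obstacle is that the approximating trajectories satisfy $u_n(s_n)\in\BB(s_n)$ at the moving time $s_n$, not at the fixed time $s$, so a direct weak limit tells us nothing about $\BB(s)$. The resolution is to first propagate the membership forward to time $s$: for all $n$ large enough that $s_n\le s$, completeness of $u_n$ gives $u_n(s)\in\tilde{P}(s,s_n)\BB(s_n)$, and pullback semi-invariance (already derived above) yields $\tilde{P}(s,s_n)\BB(s_n)\subseteq\BB(s)$, so that $u_n(s)\in\BB(s)$ for all large $n$. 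Since $\BB(s)$ is weakly closed and $u_n(s)\xrightarrow{\dw}u(s)$, I conclude $u(s)\in\BB(s)$. As $s\le t$ was arbitrary, $u$ witnesses the pullback quasi-invariance of $\BB$ at $b$, which completes the forward implication and hence, together with (\ref{qinvariance}), the claimed equivalence.
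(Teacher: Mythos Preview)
Your proof is correct and follows essentially the same approach as the paper: extract from invariance a sequence of (restrictions of) complete trajectories hitting $b$ at time $t$ and lying in $\BB(s_n)$ at time $s_n$, then apply the \iref{A1} diagonalization of Theorem~\ref{a1weakqinvariant} to obtain the limiting complete trajectory. If anything, your version is more careful than the paper's at the one nontrivial point: the paper simply asserts $u(s)\in\BB(s)$ ``since each $\BB(s)$ is weakly closed,'' whereas you explicitly verify first that $u_n(s)\in\BB(s)$ for large $n$ by pushing $u_n(s_n)\in\BB(s_n)$ forward via $\tilde{P}(s,s_n)$ and invoking semi-invariance.
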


\begin{proof}
If $\BB(t)$ is pullback semi-invariant and pullback quasi-invariant, then by 
the definition of pullback semi-invariant and (\ref{qinvariance}), $\BB(t)$ is 
pullback invariant.

For the other direction, assume $\BB(t)$ is pullback invariant. Then, we have 
that $\BB(t)$ is clearly pullback semi-invariant. To see pullback 
quasi-invariance, let $b\in\BB(t)$. Then, by pullback invariance, we can 
construct a monotonically decreasing sequence $s_n\rightarrow-\infty$ and find 
$u_n\in\EE([s_n,\infty))$ with $u_n(s_n)\in\BB(s_n)$ and $u_n(t)=b$. As in 
Theorem~\ref{a1weakqinvariant}, we can find a subsequence which, after dropping 
a subindex, is so that $u_n\rightarrow u$ for some $u\in\EE((-\infty,\infty))$ 
in the sense of $C([-T,\infty);\Xw)$ for each $T>0$. Moreover, $u(t)=b$ and 
$u(s)\in\BB(s)$ for each $s\le t$ since each $\BB(s)$ is weakly closed. 
Therefore, $\BB(t)$ is pullback quasi-invariant.
\end{proof}

Let $\II(t)$ be a family of subsets of $X$ given by 
\begin{equation*}
\II(t):=\{u(t):u\in\EE((-\infty,\infty))\}.
\end{equation*}
Then, $\II(t)$ is both pullback semi-invariant and pullback quasi-invariant. 
Moreover, $\II(t)$ contains every pullback quasi-invariant and every pullback 
invariant set. Thus, by Theorem~\ref{a1weakqinvariant}, 
\begin{equation*}
\WWw(A,t)\subseteq\II(t)
\end{equation*}
for each $t\in\R$ and each $A\subseteq X$.

Now, we will show that $\WWw(A,t)$ contains all the asymptotic behavior (as 
the initial time goes to $-\infty$) of every trajectory starting in $A$, 
provided $\iref{A1}$ holds.

\begin{thm}[Weak pullback tracking property]
\label{weaktracking}
Let $\EE$ be a generalized evolutionary system satisfying $\iref{A1}$, and let 
$A\subseteq X$. Then, for each $\ee>0$ and each $t\in\R$, there is some $s_0:=
s_0(\ee,t)\le t$ so that for $s'<s_0$ and $u\in\EE([s',\infty))$ with $u(s')\in 
A$ satisfies
\begin{equation*}
\mathrm{d}_{C([s',\infty);\Xw)}(u,v)<\ee
\end{equation*}
for some $v\in\EE((-\infty,\infty))$ with $v(s)\in\WWw(A,s)$ for each $s\le t$.
\end{thm}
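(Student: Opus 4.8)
The plan is to argue by contradiction, reusing the diagonal compactness extraction from the proof of Theorem~\ref{a1weakqinvariant} to manufacture a single complete trajectory that simultaneously lies in the pullback $\omega$-limits and tracks the offending trajectories. First I would negate the conclusion: fix $\ee>0$ and $t\in\R$ for which the statement fails. Applying the negation with the choice $s_0=t-n$ produces, for each $n\in\N$, a starting time $s_n'<t-n$ (so that $s_n'\rightarrow-\infty$) and a trajectory $u_n\in\EE([s_n',\infty))$ with $u_n(s_n')\in A$ such that
\[
\mathrm{d}_{C([s_n',\infty);\Xw)}(u_n,v)\ge\ee
\]
for \emph{every} complete trajectory $v\in\EE((-\infty,\infty))$ satisfying $v(s)\in\WWw(A,s)$ for all $s\le t$. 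After passing to a subsequence I may assume that the $s_n'$ are strictly decreasing.

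Next I would extract a limiting complete trajectory exactly as in Theorem~\ref{a1weakqinvariant}. For each fixed $k$ and all $n\ge k$ the restriction $u_n|_{[s_k',\infty)}$ lies in $\EE([s_k',\infty))$, which by \iref{A1} is compact in $C([s_k',\infty);\Xw)$; a standard diagonalization then yields a subsequence (still written $u_n$) and a trajectory $v\in\EE((-\infty,\infty))$ with $u_n\rightarrow v$ in $C([s_k',\infty);\Xw)$ for every $k$, that is, $u_n\rightarrow v$ uniformly in the weak metric on every compact time interval. I would then verify that $v$ is admissible: for each fixed $s\le t$ and all large $n$ one has $s_n'\le s$, so $u_n(s)\in P(s,s_n')A$ because $u_n(s_n')\in A$; since $u_n(s)\xrightarrow{\dw}v(s)$ while $s_n'\rightarrow-\infty$, the sequential characterization of the pullback $\omega$-limit gives $v(s)\in\WWw(A,s)$. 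Hence $v$ is precisely one of the trajectories against which each $u_n$ was assumed to be $\ee$-far.

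The contradiction must then come from showing $\mathrm{d}_{C([s_n',\infty);\Xw)}(u_n,v)<\ee$ for some large $n$, and I expect this last step to be the main obstacle. The difficulty is that the weak trajectory metric is anchored at the base point $s_n'$, and its heavily weighted leading windows $[s_n',s_n'+j]$ drift off to $-\infty$ together with $s_n'$; the uniform-on-compacts convergence supplied by the diagonal extraction controls the discrepancy on any \emph{fixed} interval but not on these \emph{receding} ones. The plan to overcome this is to split the metric into its first $J$ windows, with $J$ chosen so that the geometric tail $\sum_{j>J}2^{-j}$ falls below $\ee/2$, and to control the leading windows using the compactness (hence equicontinuity) of the families $\EE([s_m',\infty))$ granted by \iref{A1}, re-anchoring the comparison so that the convergence $u_n\rightarrow v$ is exploited on exactly the intervals that carry nonnegligible weight. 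Establishing uniform smallness on the drifting leading windows, rather than merely on fixed compact intervals, is the crux on which the entire argument turns.
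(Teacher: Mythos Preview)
Your approach is exactly the paper's: argue by contradiction, run the diagonal extraction from Theorem~\ref{a1weakqinvariant} to produce a complete limit $v\in\EE((-\infty,\infty))$, verify $v(s)\in\WWw(A,s)$ for all $s\le t$, and then contradict the hypothesis by exhibiting $\mathrm{d}_{C([s_n',\infty);\Xw)}(u_n,v)<\ee$ for large $n$. The paper's proof simply asserts this last inequality as an immediate consequence of the convergence $u_n\to u$ in $C([-T,\infty);\Xw)$ for every $T>0$, without isolating or addressing the drifting-anchor difficulty you flag; your caution on that point is in fact more scrupulous than the published argument, which leaves precisely that step unelaborated.
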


\begin{proof}
For contradiction, suppose not. Then, there exists $\ee>0$ and sequences $s_n
\le t$ with $s_n\rightarrow-\infty$, $u_n\in\EE([s_n,\infty))$ with the 
property that $u_n(s_n)\in A$ and 
\begin{equation}
\label{weakcontradiction}
\mathrm{d}_{C([s_n,\infty);\Xw)}(u_n,v)\ge\ee
\end{equation}
for each $n$ and each $v\in\EE((-\infty,\infty))$ with $v(s)\in\WWw(A,s)$ for 
$s\le t$. As in the proof of Theorem~\ref{a1weakqinvariant}, we find a $u\in
\EE((-\infty,\infty))$ and a subsequence which after dropping the subindex can 
be written as $u_n$ with $u_n\rightarrow u$ in $C([-T,\infty);\Xw)$ for each 
$T>0$. In particular, $u(s)\in\WWw(A,s)$ for each $s\le t$. In particular, for 
large enough $n$, $\mathrm{d}_{C([s_n,\infty);\Xw)}(u_n,u)<\ee$ which 
contradicts (\ref{weakcontradiction}).
\end{proof}

\begin{thm}[Strong pullback tracking property]
\label{strongtracking}
Let $\EE$ be a pullback asymptotically compact generalized evolutionary system 
satisfying $\iref{A1}$ and let $A\subseteq X$. Then, for each $\ee>0$, $t\in
\R$, and $T>0$, there is some $s_0:=s_0(\ee,t,T)\le t$ so that for $s'<s_0$ 
and each $u\in\EE([s',\infty))$ with $u(s')\in A$, we have 
\begin{equation*}
\mathrm{d}_\mathrm{s}(u(\hat{s}),v(\hat{s}))<\ee
\end{equation*}
for each $\hat{s}\in[s',s'+T]$ and some $v\in\EE((-\infty,\infty))$ so that 
$v(s)\in\WWs(A,s)$ for each $s\le t$.
\end{thm}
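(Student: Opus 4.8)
The plan is to argue by contradiction, following the template of the weak pullback tracking property (Theorem~\ref{weaktracking}) and then upgrading weak convergence to strong convergence by means of pullback asymptotic compactness. Suppose the conclusion fails for some $\ee>0$, $t\in\R$, and $T>0$. Then there are $s_n\le t$ with $s_n\to-\infty$ and trajectories $u_n\in\EE([s_n,\infty))$ satisfying $u_n(s_n)\in A$ such that
\begin{equation*}
\sup_{\hat{s}\in[s_n,s_n+T]}\ds(u_n(\hat{s}),v(\hat{s}))\ge\ee
\end{equation*}
for every complete trajectory $v\in\EE((-\infty,\infty))$ with $v(s)\in\WWs(A,s)$ for all $s\le t$. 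Exactly as in the proof of Theorem~\ref{a1weakqinvariant}, I would invoke \iref{A1} and diagonalize over the receding left endpoints to extract a subsequence (not relabeled) and a complete trajectory $u\in\EE((-\infty,\infty))$ with $u_n\to u$ in $C([-T',\infty);\Xw)$ for every $T'>0$.

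The next step is to check that this limit $u$ is itself an admissible competitor, i.e. that $u(s)\in\WWs(A,s)$ for each $s\le t$. Fix such an $s$. For $n$ large enough that $s_n\le s$ we have $u_n(s)\in P(s,s_n)A\subseteq P(s,s_n)X$, so pullback asymptotic compactness, applied at the \emph{fixed} endpoint $s$, makes $\{u_n(s)\}$ relatively strongly compact. Since $u_n\to u$ weakly uniformly on compact time intervals we already have $u_n(s)\xrightarrow{\dw}u(s)$, and strong precompactness then forces $u_n(s)\xrightarrow{\ds}u(s)$. By the equivalent characterization of the strong pullback $\WW$-limit this yields $u(s)\in\WWs(A,s)$ directly (one may also invoke Theorem~\ref{strong=weak} to identify $\WWw(A,s)=\WWs(A,s)$). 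Thus $u$ is a legitimate choice of $v$, so the displayed lower bound holds with $v=u$ for all $n$, and the entire argument reduces to proving that
\begin{equation*}
\sup_{\hat{s}\in[s_n,s_n+T]}\ds(u_n(\hat{s}),u(\hat{s}))\longrightarrow0.
\end{equation*}

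Establishing this last convergence is where I expect the genuine difficulty to lie. The obstruction is that the window $[s_n,s_n+T]$ recedes to $-\infty$ with $n$: the fixed-endpoint strong convergence obtained above and the weak convergence in $C([-T',\infty);\Xw)$ both control $u_n$ and $u$ only on time intervals that are fixed in $\R$, whereas we must now compare them on a moving interval whose left endpoint is precisely the base time $s_n$. Compounding this, trajectories are assumed only weakly continuous through \iref{A1}, so there is no a priori strong modulus of continuity in time to propagate pointwise strong control across the window. The natural attack is a worst-point argument: were the supremum not to vanish, then after passing to a subsequence there would be offsets $\theta_n\in[0,T]$ with $\theta_n\to\theta$ and $\ds(u_n(s_n+\theta_n),u(s_n+\theta_n))\ge\ee$, and one would want to force strong convergence of $u_n(s_n+\theta_n)$ to $u(s_n+\theta_n)$ to reach a contradiction. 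The crux is that $\hat{s}_n:=s_n+\theta_n\to-\infty$ is a moving endpoint, so pullback asymptotic compactness does not apply as stated; making the argument work amounts to reducing these moving-endpoint evaluations to genuinely fixed reference times $\le t$, equivalently to promoting the weak uniform-on-compacts convergence to a strong convergence that is uniform over the receding window. I anticipate this reduction—combining the weak compactness of the trajectory families from \iref{A1} with the pointwise strong compactness from pullback asymptotic compactness in an equicontinuity- or Dini-type fashion—to be the main obstacle; once it is secured, the contradiction with the lower bound is immediate and the proof is complete.
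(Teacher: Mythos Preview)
Your contradiction setup matches the paper's, but the route to the contradiction diverges at a decisive point, and the idea you are missing is exactly what the paper uses. Instead of diagonalizing to a \emph{single} limit trajectory $u\in\EE((-\infty,\infty))$ and then trying to prove $\sup_{[s_n,s_n+T]}\ds(u_n,u)\to 0$, the paper invokes the already-established weak tracking property (Theorem~\ref{weaktracking}) as a black box to produce a \emph{sequence} $v_n\in\EE((-\infty,\infty))$ with $v_n(s)\in\WWw(A,s)$ for all $s\le t$ and
\[
\sup_{\hat{s}\in[s_n,s_n+T]}\dw\bigl(u_n(\hat{s}),v_n(\hat{s})\bigr)\longrightarrow 0.
\]
Pullback asymptotic compactness gives $\WWw(A,s)=\WWs(A,s)$ (Theorem~\ref{strong=weak}), so each $v_n$ is an admissible competitor; the contradiction hypothesis then supplies $\hat{s}_n\in[s_n,s_n+T]$ with $\ds(u_n(\hat{s}_n),v_n(\hat{s}_n))\ge\ee/2$, and one finishes by extracting strongly convergent subsequences of $u_n(\hat{s}_n)$ and $v_n(\hat{s}_n)$ whose limits must agree by the weak closeness.

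The obstacle you isolate---getting any control on the receding window $[s_n,s_n+T]$---is precisely what the varying-$v_n$ trick sidesteps: weak closeness on that window comes for free from Theorem~\ref{weaktracking}, whereas your single-limit approach only gives $u_n\to u$ uniformly on \emph{fixed} compact intervals, which says nothing about $[s_n,s_n+T]$. So the gap in your argument is structural, not technical: switch from one fixed competitor to a sequence of competitors furnished by the weak tracking theorem. Your observation that pullback asymptotic compactness is stated only for a fixed terminal time is well taken; the paper's final compactness step, applied at the moving times $\hat{s}_n\to-\infty$, is asserted without further justification there as well.
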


\begin{proof}
Again, suppose not. Then, there is some $\ee>0$, $T>0$, and sequences $s_n\le 
t$ with $s_n\rightarrow-\infty$, $u_n\in\EE([s_n,\infty))$ so that $u_n(s_n)
\in A$ and 
\begin{equation}
\label{strongcontradiction}
\sup_{\hat{s}\in[s_n,s_n+T]}\mathrm{d}_{\mathrm{s}}(u_n(\hat{s}),v(\hat{s}))\ge
\ee
\end{equation}
for each $n$ and each $v\in\EE((-\infty,\infty))$ with $v(s)\in\WWs(A,s)$ for 
each $s\le t$.

By Theorem~\ref{weaktracking}, there exists a sequence $v_n\in\EE((-\infty,
\infty))$ with $v_n(s)\in\WWw(A,s)$ for $s\le t$ so that 
\begin{equation}
\label{strongcontra2}
\lim_{n\rightarrow\infty}\sup_{\hat{s}\in[s_n,s_n+T]}\mathrm{d}_\mathrm{w}
(u_n(\hat{s}),v_n(\hat{s}))=0.
\end{equation}
Using the pullback asymptotic compactness of $\EE$ we get that $\WWs(A,t')=
\WWw(A,t')$ for all $t'\in\R$. Then, by (\ref{strongcontradiction}), there is 
a sequence $\hat{s_n}\in[s_n,s_n+T]$ so that 
\begin{equation}
\label{strongcontra3}
\mathrm{d}_\mathrm{s}(u_n(\hat{s_n}),v_n(\hat{s_n}))\ge\ee/2.
\end{equation}
Again, using the pullback asymptotic compactness of $\EE$, the sequences 
$\{u_n(\hat{s_n})\}$ and $\{v_n(\hat{s_n})\}$ have convergent subsequences. 
So, passing to a subsequence and dropping a subindex, we have that 
$u_n(\hat{s_n})\xrightarrow{\ds}x$, $v_n(\hat{s_n})\xrightarrow{\ds}y$ for 
some $x,y\in X$. By (\ref{strongcontra2}), $x=y$ contradicting 
(\ref{strongcontra3}).
\end{proof}

Next, we use the above tracking properties to show that the weak pullback 
attractor $\AAw(t)=\II(t)$ if the generalized evolutionary system $\EE$ 
satisfies $\iref{A1}$.

\begin{thm}
\label{weak=i}
Let $\EE$ be a generalized evolutionary system satisfying $\iref{A1}$. Then, 
the weak pullback attractor $\AAw(t)=\II(t)$, and $\AAw(t)$ is the maximal 
pullback quasi-invariant and maximal pullback invariant subset of $X$. 
Moreover, for each $\ee>0$ and $t\in\R$ there is some $s_0:=s_0(\ee,t)\le t$ so 
that for $s'<s_0$ and every trajectory $u\in\EE([s',\infty))$ has 
\begin{equation*}
\mathrm{d}_{C([s',\infty);\Xw)}(u,v)<\ee
\end{equation*}
for some complete trajectory $v\in\EE((-\infty,\infty))$.
\end{thm}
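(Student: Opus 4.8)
The plan is to derive every assertion from results already established, the single genuinely new step being the identification $\AAw(t)=\II(t)$. By Theorem~\ref{weakattractorexists} we already know $\AAw(t)=\WWw(X,t)$, so it suffices to prove the two inclusions $\WWw(X,t)\subseteq\II(t)$ and $\II(t)\subseteq\WWw(X,t)$, and then read off the invariance, maximality, and tracking claims in turn.

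The inclusion $\WWw(X,t)\subseteq\II(t)$ is immediate from the remark following Theorem~\ref{inviffqinvandsinv}: since $\iref{A1}$ holds, Theorem~\ref{a1weakqinvariant} makes $\WWw(X,t)$ pullback quasi-invariant, and every pullback quasi-invariant family is contained in $\II(t)$ because each of its points is the value $u(t)$ of a complete trajectory. The reverse inclusion is the crux, and I would argue it directly from the equivalent sequential description of $\WWw(X,t)$. Given $x\in\II(t)$, fix a complete trajectory $u\in\EE((-\infty,\infty))$ with $u(t)=x$. For any $s\le t$ the restriction $u|_{[s,\infty)}$ lies in $\EE([s,\infty))$ by property~(2) of Definition~\ref{GES}, and $u(s)\in X$ trivially, so $x=u(t)\in P(t,s)X$. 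Choosing any sequence $s_n\to-\infty$ with $s_n\le t$, the constant sequence $x_n:=x\in P(t,s_n)X$ converges weakly to $x$, whence $x\in\WWw(X,t)$. This yields $\AAw(t)=\WWw(X,t)=\II(t)$.

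For the maximality claims I would combine the equality just obtained with Theorem~\ref{inviffqinvandsinv}. As noted after that theorem, $\II(t)$ is pullback semi-invariant and pullback quasi-invariant; moreover $\II(t)=\WWw(X,t)$ is weakly closed by Lemma~\ref{wproperties}, so Theorem~\ref{inviffqinvandsinv} upgrades it to pullback invariant. Maximality in both senses then follows from the definitions of $\tilde{P}$ and of quasi-invariance: if $\BB(t)$ is pullback quasi-invariant, each $b\in\BB(t)$ equals $v(t)$ for some complete $v$, so $\BB(t)\subseteq\II(t)$; and if $\BB(t)$ is pullback invariant, then $\BB(t)=\tilde{P}(t,s)\BB(s)\subseteq\II(t)$ since $\tilde{P}$ is built only from complete trajectories. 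Hence $\AAw(t)=\II(t)$ is simultaneously the maximal pullback quasi-invariant and the maximal pullback invariant family.

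Finally, the tracking conclusion is exactly Theorem~\ref{weaktracking} specialized to $A=X$: every $u\in\EE([s',\infty))$ automatically satisfies $u(s')\in X$, so the hypothesis on the starting value becomes vacuous and the resulting tracking trajectory $v\in\EE((-\infty,\infty))$ is complete, while $v(s)\in\WWw(X,s)=\AAw(s)$ for $s\le t$. The real obstacle does not lie in the present theorem at all: it is already absorbed into the quasi-invariance of $\WWw$ (Theorem~\ref{a1weakqinvariant}), whose diagonal-extraction and compactness argument in $C([s,\infty);\Xw)$ is what powers the nontrivial inclusion $\WWw(X,t)\subseteq\II(t)$. Here the remaining work is purely organizational, assembling those earlier statements into the four conclusions.
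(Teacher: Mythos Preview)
Your proof is correct, and it diverges from the paper's in one notable place: the inclusion $\II(t)\subseteq\WWw(X,t)$. The paper argues this by contradiction via Theorem~\ref{weaktracking}: assuming some $u(t)\in\II(t)\setminus\AAw(t)$, it finds a weak ball around $u(t)$ disjoint from $\AAw(t)$, then invokes the tracking property to produce a complete trajectory $v$ with $v(t)\in\WWw(X,t)$ and $\dw(u(t),v(t))<\ee$, a contradiction. You instead observe directly that any $x=u(t)$ with $u$ complete satisfies $x\in P(t,s)X$ for every $s\le t$ (because $u|_{[s,\infty)}\in\EE([s,\infty))$), so the constant sequence witnesses $x\in\WWw(X,t)$. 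Your route is shorter and, interestingly, does not use \iref{A1} at all for this inclusion; the paper's route makes the tracking theorem do double duty. What the paper's approach buys is a self-contained logical flow where Theorem~\ref{weaktracking} is the single engine; what yours buys is a cleaner separation, showing that $\II(t)\subseteq\AAw(t)$ is really a soft fact independent of compactness, with \iref{A1} needed only for the reverse inclusion $\AAw(t)\subseteq\II(t)$ through Theorem~\ref{a1weakqinvariant}. The remaining claims (invariance via Theorem~\ref{inviffqinvandsinv} using weak closedness of $\WWw(X,t)$, maximality, and the tracking statement as the $A=X$ case of Theorem~\ref{weaktracking}) are handled identically in substance.
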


\begin{proof}
Since $\AAw(t)=\WWw(X,t)$ and $\WWw(X,t)$ is pullback quasi-invariant by 
Theorem~\ref{a1weakqinvariant}, we have by (\ref{qinvariance}) that $\AAw(t)
\subseteq\II(t)$. For the other inclusion, let $u(t)\in\II(t)$. Suppose $u(t)
\not\in\AAw(t)$. Then, since $\AAw(t)$ is weakly closed, there is some $\ee>0$ 
and $B_\mathrm{w}(u(t),\ee)$ so that $\AAw(t)\cap B_\mathrm{w}(u(t),\ee)=
\emptyset$. By the weak pullback tracking property on $\AAw(t)=\WWw(X,t)$, 
there is some $s'$ so that for any $\hat{u}\in\EE([s',\infty))$, 
\begin{equation*}
\mathrm{d}_{C([s',\infty);\Xw)}(\hat{u},v)<\ee
\end{equation*}
for some $v\in\EE((-\infty,\infty))$ with $v(s)\in\WWw(X,s)$ for each $s\le t$. 
In particular, $u\in\EE([s',\infty))$ for each $s'<t$. Thus, there is some 
$v\in\EE((-\infty,\infty))$ so that $v(t)\in\WWw(X,t)$ and 
\begin{equation*}
\dw(u(t),v(t))\le\mathrm{d}_{C([s',\infty);\Xw)}(u,v)<\ee.
\end{equation*}
This contradicts that $\AAw(t)\cap B_\mathrm{w}(u(t),\ee)=\emptyset$. The rest 
of the theorem follows from Theorem~\ref{weaktracking}.
\end{proof}

Putting together this result as well as Theorem~\ref{strongexists} and 
Theorem~\ref{strongtracking}, we have the following corollary.

\begin{cor}
\label{strong=i}
Let $\EE$ be a pullback asymptotically compact generalized evolutionary system 
satisfying $\iref{A1}$. Then, the strong pullback attractor $\AAs(t)=\II(t)$ 
and $\AAs(t)$ is the maximal pullback invariant and maximal pullback 
quasi-invariant set. Moreover, for each $\ee>0$, $t\in\R$, and $T>0$, there 
is some $s_0:=s_0(\ee,t,T)\le t$ so that for $s'<s_0$, every trajectory $u\in
\EE([s',\infty))$ satisfies
\begin{equation*}
\ds(u(s),v(s))<\ee
\end{equation*}
for each $s\in[s',s'+T]$ and some complete trajectory $v\in\EE((-\infty,
\infty))$.
\end{cor}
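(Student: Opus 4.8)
The plan is to obtain the corollary by synthesizing three results already in hand, namely Theorem~\ref{strongexists}, Theorem~\ref{weak=i}, and Theorem~\ref{strongtracking}; no new construction is needed. First I would use pullback asymptotic compactness together with Theorem~\ref{strongexists} to conclude that the weak pullback attractor $\AAw(t)$ is in fact a strongly compact strong pullback attractor, so that $\AAs(t)$ exists and $\AAs(t)=\AAw(t)$ for every $t\in\R$. Then, invoking $\iref{A1}$ and Theorem~\ref{weak=i}, I would identify $\AAw(t)=\II(t)$, which also records that this family is simultaneously the maximal pullback quasi-invariant and the maximal pullback invariant subset of $X$. Chaining the two identities yields $\AAs(t)=\AAw(t)=\II(t)$, the first assertion of the corollary.

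For the maximality claims I would observe that they are properties of the set $\II(t)$ established in Theorem~\ref{weak=i}, and that the equality $\AAs(t)=\II(t)$ as families of sets transports those universal properties to $\AAs(t)$ without modification. For the strong tracking assertion I would specialize the strong pullback tracking property, Theorem~\ref{strongtracking}, to the choice $A=X$. Fixing $\ee>0$, $t\in\R$, and $T>0$, that theorem furnishes $s_0=s_0(\ee,t,T)\le t$ so that each $u\in\EE([s',\infty))$ with $u(s')\in X$---a vacuous restriction, hence every trajectory---admits a complete trajectory $v\in\EE((-\infty,\infty))$ with $\ds(u(\hat{s}),v(\hat{s}))<\ee$ for all $\hat{s}\in[s',s'+T]$ and with $v(s)\in\WWs(X,s)$ for each $s\le t$. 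This is precisely the inequality claimed; the additional datum $v(s)\in\WWs(X,s)=\AAs(s)$ even places the tracking trajectory inside the attractor, which is stronger than the corollary demands.

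Because every ingredient is already available, there is no genuine analytical obstacle; the corollary is bookkeeping. The only point meriting attention is to confirm that the hypotheses match: pullback asymptotic compactness feeds both Theorem~\ref{strongexists} and Theorem~\ref{strongtracking}, while $\iref{A1}$ feeds Theorem~\ref{weak=i} and Theorem~\ref{strongtracking}, so all three cited results are applicable under the stated assumptions. Once that is checked, the three equalities and the tracking estimate combine directly into the statement.
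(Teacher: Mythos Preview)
Your proposal is correct and follows exactly the approach the paper takes: the corollary is stated immediately after Theorem~\ref{weak=i} as the result of ``putting together this result as well as Theorem~\ref{strongexists} and Theorem~\ref{strongtracking},'' which is precisely the synthesis you describe. Your additional remark that the tracking trajectory actually lands in $\WWs(X,s)=\AAs(s)$ is a nice observation that the paper does not make explicit.
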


\section{Energy Inequality}
\label{energy}
In this section, we assume that our generalized evolutionary system satisfies 
properties $\iref{A2}$ and $\iref{A3}$. That is, for $\iref{A2}$, we let $X$ 
be a set in some Banach space $H$ satisfying the Radon-Riesz Property with norm 
$\abs{\cdot}$ so that $\ds(x,y)=\abs{x-y}$ for each $x,y\in X$, and assume that 
$\dw$ induces the weak topology on $X$. Assume that for each $\ee>0$ and each 
$s\in\R$ there is a $\delta:=\delta(\ee,s)$ so that for every $u\in
\EE([s,\infty))$ and $t>s\in\R$ that 
\begin{equation*}
\abs{u(t)}\le\abs{u(t_0)}+\ee
\end{equation*}
for $t_0$ a.e. in $(t-\delta,t)$. For $\iref{A3}$ we assume that if $u,u_n\in
\EE([s,\infty))$ with $u_n\rightarrow u$ in $C([s,t];\Xw)$ for some $s\le t\in
\R$, then, $u_n(t_0)\xrightarrow{\ds}u_n(t_0)$ for a.e. $t_0\in[s,t]$.

\begin{thm}
\label{strongconv}
Let $\EE$ be a generalized evolutionary system satisfying $\iref{A2}$ and 
$\iref{A3}$. Let $u_n\in\EE([s,\infty))$ be so that $u_n\rightarrow u$ in 
$C([s,t];\Xw)$ for some $u\in\EE([s,\infty))$. If $u(t)$ is strongly 
continuous at some $t^*\in(s,t)$, then $u_n(t^*)\xrightarrow{\ds}u(t^*)$.
\end{thm}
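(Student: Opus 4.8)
The plan is to exploit the Radon-Riesz property built into \iref{A2}: since convergence in $C([s,t];\Xw)$ forces $u_n(t^*)\xrightarrow{\dw}u(t^*)$, and $\dw$ induces the weak topology, it suffices to prove that the norms converge, $\abs{u_n(t^*)}\to\abs{u(t^*)}$. The lower bound $\abs{u(t^*)}\le\liminf_n\abs{u_n(t^*)}$ is immediate from weak lower semicontinuity of the norm, so the entire difficulty is concentrated in the reverse inequality $\limsup_n\abs{u_n(t^*)}\le\abs{u(t^*)}$.

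To obtain that upper bound I would fix $\ee>0$ and combine three facts localized near $t^*$. First, \iref{A2} supplies a radius $\delta=\delta(\ee,s)$, \emph{uniform over all trajectories} in $\EE([s,\infty))$, so that $\abs{u_n(t^*)}\le\abs{u_n(t_0)}+\ee$ for a.e.\ $t_0\in(t^*-\delta,t^*)$ and for every $n$. Second, \iref{A3} gives $u_n(t_0)\xrightarrow{\ds}u(t_0)$, hence $\abs{u_n(t_0)}\to\abs{u(t_0)}$, for a.e.\ $t_0\in[s,t]$. Third, strong continuity of $u$ at $t^*$ lets me further shrink $\delta$ (replacing it by a smaller radius) so that $\abs{u(t_0)}<\abs{u(t^*)}+\ee$ for all $t_0\in(t^*-\delta,t^*)$, using continuity of the norm.

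The key maneuver is to select a single time $t_0$ that is good for every index $n$ at once. Because the $\delta$ furnished by \iref{A2} does not depend on the trajectory, each $u_n$ excludes only a null set $N_n\subset(t^*-\delta,t^*)$ where its energy inequality fails, and \iref{A3} excludes one further null set; the countable union of these null sets is still null, so I can choose $t_0\in(t^*-\delta,t^*)$ lying outside all of them. For that $t_0$ the \iref{A2} inequality holds simultaneously for every $n$, and letting $n\to\infty$ gives $\limsup_n\abs{u_n(t^*)}\le\abs{u(t_0)}+\ee\le\abs{u(t^*)}+2\ee$. Sending $\ee\to0$ yields $\limsup_n\abs{u_n(t^*)}\le\abs{u(t^*)}$, which combined with the lower bound forces $\abs{u_n(t^*)}\to\abs{u(t^*)}$; the Radon-Riesz property then upgrades the weak convergence to $u_n(t^*)\xrightarrow{\ds}u(t^*)$.

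The main obstacle I anticipate is exactly this uniform-in-$n$ choice of $t_0$: the energy inequality holds only almost everywhere, and its exceptional set can shift from one $u_n$ to the next, so the argument genuinely depends on those sets being null and on $\delta$ in \iref{A2} being independent of the trajectory. Everything else—the weak lower semicontinuity of the norm and the final Radon-Riesz step—is routine once norm convergence has been established.
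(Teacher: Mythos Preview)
Your proposal is correct and follows essentially the same route as the paper: use \iref{A2}, \iref{A3}, and strong continuity at $t^*$ to pick a nearby time $t_0$ where the energy inequality and strong convergence both hold, deduce $\limsup_n\abs{u_n(t^*)}\le\abs{u(t^*)}$, and finish with weak lower semicontinuity and the Radon-Riesz property. If anything, your explicit justification that a single $t_0$ works for all $n$ (via the countable union of null sets and the trajectory-independence of $\delta$) is more carefully spelled out than in the paper's proof.
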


\begin{proof}
By $\iref{A3}$, there is a set $E\subset[s,t]$ of measure zero so that 
$u_n(t_0)\xrightarrow{\ds}u(t_0)$ on $[s,t]\backslash E$. Let $\ee>0$. By the 
energy inequality $\iref{A2}$ and the strong continuity of $u(t)$, there is 
some $t_0\in[s,t^*)\backslash E$ so that 
\begin{equation*}
\abs{u_n(t^*)}\le\abs{u_n(t_0)}+\ee/2,\quad\abs{u(t_0)}\le\abs{u(t^*)}+\ee/2,
\end{equation*}
for each $n$. Taking the upper limit, we then have that 
\begin{equation*}
\limsup_{n\rightarrow\infty}\abs{u_n(t^*)}
  \le\limsup_{n\rightarrow\infty}\abs{u_n(t_0)}+\ee/2
  =\abs{u(t_0)}+\ee/2
  \le\abs{u(t^*)}+\ee.
\end{equation*}
Letting $\ee\rightarrow0$, we have that 
\begin{equation*}
\limsup_{n\rightarrow\infty}\abs{u_n(t^*)}\le\abs{u(t^*)}.
\end{equation*}
Since $u_n(t^*)\xrightarrow{\dw}u(t^*)$, by assumption, we know that 
$\liminf_{n\rightarrow\infty}\abs{u_n(t^*)}\ge\abs{u(t^*)}.$ Thus, $\lim_{n
\rightarrow\infty}\abs{u_n(t^*)}=\abs{u(t^*)}$, and, using the 
Radon-Riesz property, we have that $u_n(t^*)\xrightarrow{\ds}u(t^*)$.
\end{proof}

\begin{thm}
\label{123implyPAC}
Let $\EE$ be a generalized evolutionary system satisfying $\iref{A1}$, 
$\iref{A2}$, and $\iref{A3}$. If $\EE((-\infty,\infty))\subseteq 
C((-\infty,\infty);\Xs)$, then $\EE$ is pullback asymptotically compact.
\end{thm}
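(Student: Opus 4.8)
The plan is to show that given a sequence $x_n \in P(t,s_n)X$ with $s_n \to -\infty$, the sequence has a strongly convergent subsequence. The key idea is to first use $\iref{A1}$ to extract a complete limiting trajectory, then use the energy machinery $\iref{A2}$, $\iref{A3}$ together with the strong continuity assumption to upgrade the weak convergence at time $t$ to strong convergence.

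First I would pick $u_n \in \EE([s_n,\infty))$ with $u_n(s_n) \in X$ and $u_n(t) = x_n$, assuming after passing to a subsequence that $s_n$ is monotonically decreasing. Exactly as in the proof of Theorem~\ref{a1weakqinvariant}, I would apply $\iref{A1}$ repeatedly together with the diagonalization argument to extract a subsequence (not relabeled) and a complete trajectory $u \in \EE((-\infty,\infty))$ so that $u_n \to u$ in $C([-T,\infty);\Xw)$ for every $T>0$. In particular $x_n = u_n(t) \xrightarrow{\dw} u(t)$, so the candidate strong limit is $x := u(t)$, and $x \in X$.

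The crucial step is then to promote this weak convergence to strong convergence. Here I would invoke the hypothesis $\EE((-\infty,\infty)) \subseteq C((-\infty,\infty);\Xs)$: since $u$ is complete, $u(\cdot)$ is strongly continuous everywhere, in particular at the point $t^*$. However, there is a subtlety in applying Theorem~\ref{strongconv} directly, since that theorem is stated for a fixed starting time $s$ with all $u_n \in \EE([s,\infty))$, whereas here the $u_n$ start at the varying times $s_n$. To handle this, I would fix any $s^* < t$ and note that for all $n$ large enough $s_n \le s^*$, so by property~(2) of Definition~\ref{GES} the restrictions $u_n|_{[s^*,\infty)}$ all lie in $\EE([s^*,\infty))$, and $u_n|_{[s^*,\infty)} \to u|_{[s^*,\infty)}$ in $C([s^*,t];\Xw)$. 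With $s := s^*$ and the common endpoint $t$, choosing any $t^* \in (s^*,t)$ at which $u$ is strongly continuous (which holds for every $t^*$ by the completeness assumption), Theorem~\ref{strongconv} yields $u_n(t^*) \xrightarrow{\ds} u(t^*)$.

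The hardest part will be organizing this to produce strong convergence precisely at the target time $t$ rather than at an interior point $t^*$, since the energy inequality in $\iref{A2}$ controls $\abs{u(t)}$ only by the value at earlier times $t_0 \in (t-\delta,t)$. The natural remedy is to reprove the estimate of Theorem~\ref{strongconv} with $t^* = t$ as the right endpoint: the energy inequality gives $\abs{u_n(t)} \le \abs{u_n(t_0)} + \ee/2$ for a.e.\ $t_0 \in (t-\delta,t)$, and strong continuity of the complete limit $u$ at $t$ gives $\abs{u(t_0)} \le \abs{u(t)} + \ee/2$ for $t_0$ close enough to $t$; picking $t_0 \in (t-\delta,t) \setminus E$ (with $E$ the null set from $\iref{A3}$) where $u_n(t_0) \xrightarrow{\ds} u(t_0)$, and taking $\limsup$ then yields $\limsup_n \abs{u_n(t)} \le \abs{u(t)}$. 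Combined with the weak convergence $u_n(t) \xrightarrow{\dw} u(t)$ and the Radon--Riesz property, this gives $x_n = u_n(t) \xrightarrow{\ds} u(t) = x$. Hence every such sequence $\{x_n\}$ is relatively strongly compact, so $\EE$ is pullback asymptotically compact.
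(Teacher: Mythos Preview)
Your proof is correct and follows the same approach as the paper: extract a complete limiting trajectory via \iref{A1} and diagonalization, then use \iref{A2}, \iref{A3}, and the Radon--Riesz property (i.e.\ the content of Theorem~\ref{strongconv}) together with strong continuity of complete trajectories to upgrade weak convergence at $t$ to strong convergence.

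One remark: the ``hardest part'' you flag --- that Theorem~\ref{strongconv} is stated for $t^*\in(s,t)$ while you need it at the right endpoint --- is not actually an obstacle, and the paper simply invokes Theorem~\ref{strongconv} directly. Since each $u_n\in\EE([s_n,\infty))$ is defined for all times $\ge s_n$, you have $u_n\to u$ in $C([s^*,t'];\Xw)$ for any $t'>t$; taking $t'>t$ makes $t$ an interior point of $(s^*,t')$ and Theorem~\ref{strongconv} applies verbatim. Your direct reproof of the energy argument at the endpoint is of course also correct, just slightly more work than necessary.
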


\begin{proof}
Let $s_n\rightarrow-\infty$, $s_n\le t$ for some $t\in\R$ and $x_n\in 
P(t,s_n)X$. Since $X$ is weakly compact, we can pass to a subsequence and drop 
a subindex to assume that $x_n\xrightarrow{\dw}x$ for some $x\in X$. 

Next, since $x_n\in P(t,s_n)X$, there is some $u_n\in\EE([s_n,\infty))$ 
with $u_n(t)=x_n$ for each $n$. Using $\iref{A1}$ and the usual 
diagonalization process, we can pass to a subsequence and drop a subindex 
to find that $u_n\rightarrow u$ in $C((-\infty,\infty),\Xw)$ for some $u\in
\EE((-\infty,\infty))\subseteq C((-\infty,\infty);\Xs)$. Since $u$ is strongly 
continuous at $t$, Theorem~\ref{strongconv} implies that $u_n(t)=x_n
\xrightarrow{\ds}x=u(t)$. Therefore, $\EE$ is pullback asymptotically compact.
\end{proof}

Together with Theorem~\ref{strongexists}, we have the following:

\begin{cor}
\label{123implystrong}
let $\EE$ be a generalized evolutionary system satisfying $\iref{A1}$, 
$\iref{A2}$, and $\iref{A3}$. If every complete trajectory is strongly 
continuous, then $\EE$ possesses a strongly compact, strong pullback 
attractor $\AAs(t)$.
\end{cor}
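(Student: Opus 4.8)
The plan is simply to chain the two results immediately preceding this corollary, since all of the genuinely analytic work has already been carried out. First I would observe that the hypothesis that every complete trajectory is strongly continuous is exactly the inclusion $\EE((-\infty,\infty))\subseteq C((-\infty,\infty);\Xs)$ required by Theorem~\ref{123implyPAC}. Thus, with \iref{A1}, \iref{A2}, and \iref{A3} in hand, Theorem~\ref{123implyPAC} applies verbatim and tells us that $\EE$ is pullback asymptotically compact.

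Second, I would feed this conclusion directly into Theorem~\ref{strongexists}, which asserts precisely that for a pullback asymptotically compact generalized evolutionary system the weak pullback attractor $\AAw(t)$ is in fact a strongly compact strong pullback attractor. Renaming $\AAw(t)$ as $\AAs(t)$, in accordance with the statement of that theorem, completes the argument.

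There is essentially no obstacle at the level of this corollary; it is purely a matter of checking that the hypotheses line up and invoking the two cited theorems in sequence. If I wanted to locate where the real difficulty sits, it lives one step earlier, inside Theorem~\ref{123implyPAC}: there, \iref{A1} supplies a weakly convergent limiting complete trajectory via the diagonal compactness argument, the standing strong-continuity assumption ensures that limit lies in $C((-\infty,\infty);\Xs)$, and Theorem~\ref{strongconv}—built from the energy inequality \iref{A2} together with the a.e.\ strong convergence \iref{A3}—upgrades weak convergence at the strongly continuous time to strong convergence there. That is the genuine content; the present corollary only assembles those conclusions into the existence of $\AAs(t)$.
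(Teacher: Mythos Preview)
Your proposal is correct and matches the paper's approach exactly: the corollary is stated immediately after Theorem~\ref{123implyPAC} with the lead-in ``Together with Theorem~\ref{strongexists}, we have the following,'' so the intended proof is precisely the two-step chaining you describe. There is nothing to add.
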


In fact, following the proofs of Theorem~\ref{strong=weak} and 
Theorem~\ref{123implyPAC}, we have the following generalization.

\begin{thm}
\label{123omegalimits}
Let $\EE$ be a generalized evolutionary system satisfying $\iref{A1}$, $\iref
{A2}$, and $\iref{A3}$. Let $A\subseteq X$ be such that for each $s\in\R$ 
there exists some $u\in\EE([s,\infty))$ with $u(t)\in A$. Assume that $u$ is 
strongly continuous at $t$ for each $u\in\EE((-\infty,\infty))$ with $u(t)\in
\WWw(A,t)$. Then, $\WWw(A,t)$ is a nonempty, strongly compact set that strongly 
pullback attracts $A$. Moreover, $\WWs(A,t)=\WWw(A,t)$.
\end{thm}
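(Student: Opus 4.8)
The plan is to run the proof of Theorem~\ref{strong=weak} essentially verbatim, but with the global hypothesis of pullback asymptotic compactness replaced by a \emph{local} compactness statement valid only along $\WWw(A,t)$; this local statement is precisely what the standing strong-continuity hypothesis delivers through Theorem~\ref{strongconv}. Since the condition imposed on $A$ is exactly the hypothesis of Theorem~\ref{weakattracts}, I would first quote that theorem to obtain that $\WWw(A,t)$ is nonempty and weakly compact (and weakly pullback attracts $A$), so that only the strong conclusions remain to be proved.

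The heart of the matter is the following claim, which plays the role of pullback asymptotic compactness restricted to $\WWw(A,t)$: whenever $s_n\to-\infty$ with $s_n\le t$ and $x_n\in P(t,s_n)A$, the sequence $\{x_n\}$ has a subsequence converging strongly to a point of $\WWw(A,t)$. To prove it I would pass (using weak compactness of $X$) to a weakly convergent subsequence $x_n\xrightarrow{\dw}x$, so that $x\in\WWw(A,t)$ by the equivalent description of the pullback $\WW$-limit. Writing $x_n=u_n(t)$ with $u_n\in\EE([s_n,\infty))$, I would then run the nested-subsequence diagonalization of Theorem~\ref{a1weakqinvariant}, which uses \iref{A1}, to extract a complete trajectory $u\in\EE((-\infty,\infty))$ with $u_n\to u$ in $C([-T,\infty);\Xw)$ for every $T>0$. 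Evaluation at $t$ then forces $u(t)=x\in\WWw(A,t)$, so the standing hypothesis makes $u$ strongly continuous at $t$. Regarding $u_n|_{[t-1,\infty)}$ and $u|_{[t-1,\infty)}$ as elements of $\EE([t-1,\infty))$ via the restriction property and noting that $t\in(t-1,t+1)$ is an interior point, Theorem~\ref{strongconv} (which uses \iref{A2} and \iref{A3}) gives $x_n=u_n(t)\xrightarrow{\ds}u(t)=x$, establishing the claim.

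Granting the claim, the three remaining statements follow exactly as in Theorem~\ref{strong=weak}. For strong pullback attraction, a failure would produce $\ee>0$, $s_n\to-\infty$ and $x_n\in P(t,s_n)A$ with $\ds(x_n,\WWw(A,t))\ge\ee$, contradicting the strongly convergent subsequence supplied by the claim. For the identity $\WWs(A,t)=\WWw(A,t)$, the inclusion $\WWs\subseteq\WWw$ is Lemma~\ref{wproperties}(2), and for the reverse I would take $x\in\WWw(A,t)$ with a defining sequence $x_n\in P(t,s_n)A$, $x_n\xrightarrow{\dw}x$, apply the claim to get a subsequence $x_{n_k}\xrightarrow{\ds}x'\in\WWw(A,t)$, and use uniqueness of weak limits to conclude $x'=x$, hence $x\in\WWs(A,t)$. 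For strong compactness, given $\{x_n\}\subseteq\WWs(A,t)$ I would use the equivalent description of $\WWs$ to choose diagonal points $y_n\in P(t,\sigma_n)A$ with $\sigma_n\to-\infty$ and $\ds(y_n,x_n)\to0$, apply the claim to $\{y_n\}$, and transfer the strong limit back to $\{x_n\}$, the limit lying in $\WWs(A,t)$ because it is strongly closed.

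The hard part will be the bookkeeping inside the claim: carrying out the diagonalization over the intervals $[s_k,\infty)$ so that the limit really restricts compatibly and belongs to $\EE((-\infty,\infty))$ (as in Theorem~\ref{a1weakqinvariant}), verifying that the weak limit satisfies $u(t)\in\WWw(A,t)$ so that the strong-continuity hypothesis applies, and respecting the interiority requirement $t^*\in(s,t)$ of Theorem~\ref{strongconv}, which is why one works on an interval such as $[t-1,t+1]$ rather than up to the endpoint. Once the claim is in place, the rest is a faithful transcription of the proofs of Theorem~\ref{strong=weak} and Theorem~\ref{123implyPAC}.
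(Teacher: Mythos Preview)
Your proposal is correct and follows exactly the approach the paper indicates: the paper's proof consists of the single sentence ``following the proofs of Theorem~\ref{strong=weak} and Theorem~\ref{123implyPAC}, we have the following generalization,'' and you have faithfully carried out that program by isolating the local compactness claim (the analogue of Theorem~\ref{123implyPAC} restricted to sequences in $P(t,s_n)A$) and then transcribing the argument of Theorem~\ref{strong=weak}. Your attention to the interiority requirement of Theorem~\ref{strongconv} and to the diagonalization from Theorem~\ref{a1weakqinvariant} fills in precisely the details the paper leaves implicit.
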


\section{Pullback Attractors for Evolutionary Systems}
\label{relate}
\subsection{Autonomous Case}

We will begin with the definitions and major results for autonomous 
evolutionary systems as given in \cite{C09}, \cite{CF06}. Note that $X$ has 
the same structure as it had in Section~\ref{setup}. That is, $X$ is endowed 
with two metrics $\ds$ known as the strong metric and $\dw$ known as the 
weak metric so that $X$ is $\dw$-compact and every $\ds$-convergent sequence 
is also $\dw$-convergent.

\begin{define}\cite{C09}
\label{evolutionarysystem}
A map $\EE$ that associates to each $I\in\TT$ a subset $\EE(I)\subseteq\FF(I)$ 
will be called an evolutionary system if the following conditions are satisfied:
\begin{enumerate}
\item $\EE([0,\infty))\ne\emptyset$.
\item $\EE(I+s)=\{u(\cdot):u(\cdot+s)\in\EE(I)\}$ for all $s\in\R$.
\item $\{u(\cdot)|_{I_2}:u(\cdot)\in\EE(I_1)\}\subseteq\EE(I_2)$ for all pairs 
$I_1,I_2\in\TT$, so that $I_2\subseteq I_1$.
\item $\EE((-\infty,\infty))=\{u(\cdot):u(\cdot)|_{[T,\infty)}\in\EE([T,\infty))
\forall T\in\R\}$.
\end{enumerate}
\end{define}

As with a generalized evolutionary system, $\EE(I)$ is referred to as the set of 
all trajectories on the time interval $I$. Trajectories in $\EE((-\infty,\infty))$ are 
known as complete. Let $\PP(X)$ be the set of all subsets of $X$. For each 
$t\ge 0$, the map
\begin{equation*}
R(t):\PP(X)\rightarrow\PP(X)
\end{equation*}
is defined by
\begin{equation*}
R(t)A:=\{u(t):u(0)\in A,u\in\EE([0,\infty))\}\mathrm{~for~}A\subseteq X.
\end{equation*}
By the definitions of $\EE$ and $R(t)$, $R$ has the following property for each 
$A\subseteq X$, $t,s\ge 0$,
\begin{equation*}
R(t+s)A\subseteq R(t)R(s)A.
\end{equation*}

\begin{define}\cite{C09}
A set $\AAd\subseteq X$ is a $\dd$-global attractor ($\bullet=$ s or w) if 
$\AAd$ is a minimal set which is
\begin{enumerate}
\item $\dd$-closed.
\item $\dd$-attracting: for any $B\subseteq X$ and any $\ee>0$, there is a 
$t_0:=t_0(B,\ee)$ so that 
\begin{equation*}
R(t)B\subseteq B_\bullet(\AAd,\ee):=\{u:\inf_{x\in\AAd}\dd(u,x)<\ee\}
      \mathrm{~for~all~}t\ge t_0.
\end{equation*}
\end{enumerate}
\end{define}

\begin{define}\cite{C09}
The $\ww_\bullet$-limit ($\bullet =$ s or w) of a set $A\subseteq X$ is
\begin{equation*}
\ww_\bullet(A):=\bigcap_{T\ge0}\overline{\bigcup_{t\ge T}R(t)A}^\bullet.
\end{equation*}
\end{define}

Equivalently, $x\in \ww_\bullet(A)$ if there exist sequences $t_n\rightarrow\infty$ 
as $n\rightarrow\infty$ and $x_n\in R(t_n)A$, such that $x_n\xrightarrow{\dd}x$ 
as $n\rightarrow\infty$. 

To extend the notion of invariance from a semiflow to an evolutionary 
system, the following mapping is used for $A\subseteq X$ and $t\in\R$:
\begin{equation*}
\tilde{R}(t)A:=\{u(t):u(0)\in A, u\in\EE((-\infty,\infty))\}.
\end{equation*}

\begin{define}\cite{C09}
A set $A\subseteq X$ is positively invariant if for each $t\ge0$,
\begin{equation*}
\tilde{R}(t)A\subseteq A.
\end{equation*}
We say that $A$ is invariant if for each $t\ge0$,
\begin{equation*}
\tilde{R}(t)(A)=A.
\end{equation*}
$A$ is quasi-invariant if for every $a\in A$, there exists a complete 
trajectory $u\in\EE((-\infty,\infty))$ with $u(0)=a$ and $u(t)\in A$ for all 
$t\in\R$.
\end{define}

\begin{define}\cite{C09}
The evolutionary system $\EE$ is asymptotically compact if for any $t_k
\rightarrow\infty$ and any $x_k\in R(t_k)X$, the sequence $\{x_k\}$ is 
relatively strongly compact.
\end{define}

Here are other assumptions that are imposed on $\EE$.
\begin{enumerate}
\refitem{B1} $\EE([0,\infty))$ is a compact set in $C([0,\infty);
\Xw)$.
\refitem{B2} Assume that $X$ is a set in some Banach space $H$ satisfying the 
Radon-Riesz property with the norm denoted by $\abs{\cdot}$, so that $\ds(x,y)=
\abs{x-y}$ for $x,y\in X$ and $\dw$ induces the weak topology on $X$. Assume 
also that for any $\ee>0$, there exists $\delta:=\delta(\ee)$, such that for 
every $u\in\EE([0,\infty))$ and $t>0$,
\begin{equation*}
\abs{u(t)}\le\abs{u(t_0)}+\ee,
\end{equation*}
for $t_0$ a.e. in $(t-\delta,t)$.
\refitem{B3} Let $u,u_n\in\EE(([0,\infty))$, be so that $u_n\rightarrow u$ in 
$C([0,T];\Xw)$ for some $T>0$. Then, $u_n(t)\rightarrow u(t)$ strongly a.e. in 
$[0,T]$.
\end{enumerate}

\begin{thm}\cite{C09}
\label{globalattractorresults}
Let $\EE$ be an evolutionary system. Then,
\begin{enumerate}
\item If the $\dd$-global attractor $\AAd$ exists, then $\AAd=\wwd(X)$.
\item The weak global attractor $\AAw$ exists.
\end{enumerate}
Furthermore, if $\EE$ satisfies $\iref{B1}$, then
\begin{enumerate}
\addtocounter{enumi}{2}
\item $\AAw=\ww_\mathrm{w}(X)=\ww_\mathrm{s}(X)=\{u_0:u_0=u(0)\mathrm{~for~
some~}u\in\EE((-\infty,\infty))\}$.
\item $\AAw$ is the maximal invariant and maximal quasi-invariant set.
\item (Weak uniform tracking property) For any $\ee>0$, there exists a $t_0:=
t_0(\ee)$, so that for any $t>t_0$, every trajectory $u\in\EE([0,\infty))$ 
satisfies $\mathrm{d}_{C([t,\infty);\Xw)}(u,v)<\ee$, for some complete 
trajectory $v\in\EE((-\infty,\infty))$.
\end{enumerate}
\end{thm}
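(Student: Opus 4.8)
The plan is to treat an autonomous evolutionary system as a special generalized evolutionary system and then read off each conclusion by translating the pullback results of the preceding sections into forward-time statements. First I would verify that any $\EE$ satisfying Definition~\ref{evolutionarysystem} also satisfies Definition~\ref{GES}: condition~(1) of the latter follows from $\EE([0,\infty))\ne\emptyset$ together with the time-translation axiom, which gives $\EE([\tau,\infty))=\{u(\cdot):u(\cdot+\tau)\in\EE([0,\infty))\}\ne\emptyset$, while conditions~(2) and~(3) are the restriction and concatenation axioms verbatim. The same axiom shows that \iref{B1} implies \iref{A1}, since the shift $u\mapsto u(\cdot+s)$ is a homeomorphism $C([0,\infty);\Xw)\to C([s,\infty);\Xw)$ carrying $\EE([0,\infty))$ onto $\EE([s,\infty))$. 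The single identity driving everything is
\begin{equation*}
P(t,s)A=R(t-s)A,\qquad s\le t,
\end{equation*}
again a consequence of time translation. Substituting $T=t-r$ collapses the pullback $\WW$-limit onto the forward $\ww$-limit,
\begin{equation*}
\WWd(X,t)=\bigcap_{s\le t}\overline{\bigcup_{r\le s}R(t-r)X}^\bullet
         =\bigcap_{T\ge0}\overline{\bigcup_{\tau\ge T}R(\tau)X}^\bullet=\wwd(X),
\end{equation*}
for every $t\in\R$; in particular $\wwd(X)$ is independent of $t$.

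Because of this identity, the forward attraction of a fixed set $A$ and the pullback attraction of the constant family $A(t)\equiv A$ are literally the same statement: $R(T)X\subseteq B_\bullet(A,\ee)$ for all large $T$ says exactly that $P(t,s)X\subseteq B_\bullet(A,\ee)$ for all sufficiently negative $s$. Hence a $\dd$-global attractor is precisely a $\dd$-pullback attractor that happens to be constant in $t$, and Part~1 follows from Theorem~\ref{exists}: if $\AAd$ exists it is the $\dd$-pullback attractor, whence $\AAd=\WWd(X,t)=\wwd(X)$. Concretely, $\wwd(X)=\WWd(X,t)\subseteq\AAd$ by Lemma~\ref{minimal}, and the reverse inclusion is the removal argument in the proof of Theorem~\ref{exists}. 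Part~2 follows from Theorem~\ref{weakattractorexists}: the weak pullback attractor $\AAw(t)=\WWw(X,t)=\www(X)$ always exists and is constant, and by the attraction equivalence the weakly closed set $\www(X)$ is weakly attracting; Lemma~\ref{minimal} makes it minimal, so it is the weak global attractor.

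For the three statements assuming \iref{B1} I would invoke the generalized theorems directly. Theorem~\ref{weak=i} gives $\AAw(t)=\II(t)=\{u(t):u\in\EE((-\infty,\infty))\}$; since shifts of complete trajectories are complete, $\II(t)=\II(0)$ is independent of $t$, yielding $\AAw=\{u(0):u\in\EE((-\infty,\infty))\}$ and, with Part~2, the identity $\AAw=\www(X)$. Applying Corollary~\ref{a1weak=strong} with $A=X$ (the hypothesis $\WWw(X,t)\subseteq X$ being automatic) gives $\www(X)=\wws(X)$, completing Part~3. For Part~4, the maximality statement of Theorem~\ref{weak=i} transfers once one notes $\tilde P(t,s)=\tilde R(t-s)$, so a constant family is pullback invariant (resp.\ quasi-invariant) exactly when the set is invariant (resp.\ quasi-invariant) in the autonomous sense; that $\AAw$ itself is invariant and bidirectionally quasi-invariant is read off from $\AAw=\II(0)$ and the constancy of $\II$. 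Part~5 is the forward translation of the weak pullback tracking property inside Theorem~\ref{weak=i}: given $\ee$, take the $s_0$ it provides (with $A=X$); to track the tail on $[t,\infty)$ of some $u\in\EE([0,\infty))$, shift that tail back to a trajectory $w=u(\,\cdot+(t-s'))\in\EE([s',\infty))$ starting at a prescribed $s'<s_0$, apply the pullback estimate to $w$, and shift the resulting complete trajectory forward; shift-covariance of the $C([a,\infty);\Xw)$ metric turns $\mathrm{d}_{C([s',\infty);\Xw)}(w,v_w)<\ee$ into $\mathrm{d}_{C([t,\infty);\Xw)}(u,v)<\ee$.

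The genuinely delicate point is the time-translation bookkeeping in Parts~4 and~5, not any new analytic input. In Part~4 one must check that shifting intertwines $\tilde R$ with $\tilde P$ correctly and, more subtly, that autonomous quasi-invariance is bidirectional whereas pullback quasi-invariance only looks into the past; the constancy $\II(t)=\II(0)$ is exactly what upgrades the one-sided pullback conclusion to the two-sided autonomous one, while maximality still transfers because a bidirectionally (quasi-)invariant set is a fortiori pullback (quasi-)invariant. In Part~5 the care lies in matching the weighted metrics under the shift $t\leftrightarrow s'$---the correct move shifts the tail into the far past rather than shifting $u$ forward---and in confirming that the single threshold $s_0$ serves uniformly over all $u\in\EE([0,\infty))$. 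All the hard analysis (compactness in $C([s,\infty);\Xw)$, the diagonal extraction of a complete limit trajectory, and pullback asymptotic compactness) has already been absorbed into the cited generalized theorems, so no further estimates are required.
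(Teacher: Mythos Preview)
The paper does not actually prove this theorem: it is quoted verbatim from \cite{C09} and carries no proof in the present text. What the paper \emph{does} do, immediately after stating it, is record the dictionary $P(t,s)A=R(t-s)A$, prove $\WWd(A,t)=\wwd(A)$ (Theorem~\ref{ww=WW}), and deduce that the global and pullback attractors coincide (Theorem~\ref{GA=PA}). In other words, the paper takes the autonomous results as an external input and then shows they match the pullback theory of Sections~\ref{setup}--\ref{tracking}.

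Your proposal runs this logic in reverse: you treat the autonomous evolutionary system as a generalized one and \emph{derive} the five conclusions from the pullback theorems already proved. For Parts~1--2 this is exactly the content of Theorems~\ref{ww=WW} and~\ref{GA=PA}, so there is no real difference. For Parts~3--5 your route is genuinely different from simply citing \cite{C09}: you pull them out of Theorem~\ref{weak=i}, Corollary~\ref{a1weak=strong}, and the tracking statement in Theorem~\ref{weak=i} together with a shift argument. The gain is conceptual economy---no appeal to \cite{C09} is needed once Sections~\ref{setup}--\ref{tracking} are in place---at the cost of the bookkeeping you flag (matching the two notions of quasi-invariance, and checking that the shift $u\mapsto u(\cdot+(t-s'))$ is an isometry $C([t,\infty);\Xw)\to C([s',\infty);\Xw)$ so that the $\ee$ transfers verbatim). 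Those points are handled correctly in your sketch. The one thing to be aware of is that your Parts~3--5 rest on Theorems~\ref{weaktracking} and~\ref{weak=i} exactly as stated in the paper; any imprecision there is inherited, but that is a feature of the paper's internal logic, not of your reduction.
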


\begin{thm}\cite{C09}
\label{globalattractorAC}
Let $\EE$ be an asymptotically compact evolutionary system. Then,
\begin{enumerate}
\item The strong global attractor $\AAs$ exists, it is strongly compact, 
and $\AAs=\AAw$.
\end{enumerate}
Furthermore, if $\EE$ satisfies $\iref{B1}$, then
\begin{enumerate}
\addtocounter{enumi}{1}
\item (Strong uniform tracking property) for any $\ee>0$ and $T>0$, there 
exists $t_0:=t_0(\EE,T)$, so that for any $t^*>t_0$, every trajectory $u\in
\EE([0,\infty))$ satisfies $\ds(u(t),v(t))<\ee$, for all $t\in[t^*,t^*+T]$, for 
some complete trajectory $v\in\EE((-\infty,\infty))$.
\end{enumerate}
\end{thm}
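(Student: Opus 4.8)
The plan is to treat Theorem~\ref{globalattractorAC} as the autonomous counterpart of the pullback results already established, with asymptotic compactness serving as the mechanism that upgrades weak information to strong information. For part (1) I would mirror the proofs of Theorem~\ref{strong=weak} and Theorem~\ref{strongexists}, working with $R(t)$ and $\wwd$ in place of $P(t,s)$ and $\WWd$; for part (2) I would mirror Theorem~\ref{strongtracking}, replacing the pullback weak tracking input by the weak uniform tracking property in part (5) of Theorem~\ref{globalattractorresults}.

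For part (1), first I would establish that $\www(X)$ is nonempty (it is the weak global attractor by part (2) of Theorem~\ref{globalattractorresults}) and that it strongly attracts $X$. Suppose strong attraction fails: then there are $\ee>0$ and $t_n\to\infty$ with $x_n\in R(t_n)X$ and $x_n\notin B_\mathrm{s}(\www(X),\ee)$. By asymptotic compactness a subsequence satisfies $x_n\xrightarrow{\ds}x$, hence $x_n\xrightarrow{\dw}x$, so $x\in\www(X)$, contradicting that the $x_n$ stay strongly $\ee$-away. The same extraction gives $\wws(X)=\www(X)$: the inclusion $\wws(X)\subseteq\www(X)$ is the autonomous analog of Lemma~\ref{wproperties}, while for $x\in\www(X)$ with weakly convergent $x_n\in R(t_n)X$ asymptotic compactness produces a strong limit $y$, and uniqueness of the weak limit forces $y=x\in\wws(X)$. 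Strong compactness of $\wws(X)$ follows from a diagonal argument: any sequence in $\wws(X)$ is a strong limit of points in various $R(t)X$, so a diagonal sequence lies in $\bigcup R(t_n)X$ with $t_n\to\infty$ and is relatively strongly compact, while $\wws(X)$ is strongly closed. Finally, $\wws(X)=\www(X)$ is strongly closed, strongly attracting, and minimal among such sets by the autonomous analog of Lemma~\ref{minimal}, so by part (1) of Theorem~\ref{globalattractorresults} it is the strong global attractor, and $\AAs=\wws(X)=\www(X)=\AAw$.

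For part (2), assume \iref{B1} and argue by contradiction. If strong uniform tracking fails, there are $\ee>0$, $T>0$, times $t^*_n\to\infty$, and trajectories $u_n\in\EE([0,\infty))$ such that every complete trajectory $v$ obeys $\sup_{t\in[t^*_n,t^*_n+T]}\ds(u_n(t),v(t))\ge\ee$. Since $t^*_n\to\infty$, the weak uniform tracking property can be applied with tolerance tending to zero, yielding complete trajectories $v_n$ with $\mathrm{d}_{C([t^*_n,\infty);\Xw)}(u_n,v_n)\to0$; reading off the low-index terms of this series metric gives $\sup_{t\in[t^*_n,t^*_n+T]}\dw(u_n(t),v_n(t))\to0$. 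Taking $v=v_n$ in the failure hypothesis produces $t_n\in[t^*_n,t^*_n+T]$ with $\ds(u_n(t_n),v_n(t_n))\ge\ee$; since $t_n\to\infty$ and both $u_n(t_n)$ and $v_n(t_n)$ lie in $R(t_n)X$ (the latter because $v_n(0)\in X$), asymptotic compactness gives, along a subsequence, $u_n(t_n)\xrightarrow{\ds}x$ and $v_n(t_n)\xrightarrow{\ds}y$. The weak estimate forces $x=y$, contradicting $\ds(u_n(t_n),v_n(t_n))\ge\ee$.

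The main obstacle I anticipate is the bookkeeping needed to pass from the weak uniform tracking property, which controls $u_n$ and $v_n$ only in the series metric on $C([t^*_n,\infty);\Xw)$, to a genuine uniform weak estimate on the window $[t^*_n,t^*_n+T]$, and then to verify that both evaluated sequences $\{u_n(t_n)\}$ and $\{v_n(t_n)\}$ are legitimately of the form $R(t_n)X$ with $t_n\to\infty$ so that asymptotic compactness applies to each. Once that is arranged, the weak-to-strong upgrade is forced by the same Radon--Riesz-type uniqueness of limits already exploited in the pullback case.
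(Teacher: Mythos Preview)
The paper does not give its own proof of Theorem~\ref{globalattractorAC}; the result is quoted from \cite{C09}. Your proposal is correct and is exactly the argument one would expect: you translate, line by line, the pullback proofs that the paper \emph{does} give (Theorems~\ref{strong=weak}, \ref{strongexists}, and \ref{strongtracking}) back into the autonomous setting, using the dictionary the paper records right after Theorem~\ref{123giveAC} (in particular $R(t)=P(t,0)$, asymptotic compactness $\Leftrightarrow$ pullback asymptotic compactness, and \iref{B1} $\Leftrightarrow$ \iref{A1}). The paper in effect confirms this route indirectly, since Theorem~\ref{GA=PA} together with Theorem~\ref{strongexists} recovers part~(1), and Corollary~\ref{strong=i} is precisely the pullback form of part~(2).

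The only point worth tightening is the one you yourself flag: extracting a uniform weak bound on the window $[t_n^*,t_n^*+T]$ from the series metric in part~(5) of Theorem~\ref{globalattractorresults}. Since $T$ is fixed, the $k$-th summand of $\mathrm{d}_{C([t_n^*,\infty);\Xw)}$ with $k\ge T$ controls $\sup_{[t_n^*,t_n^*+T]}\dw(u_n,v_n)$, so choosing the weak-tracking tolerance $\to 0$ as $t_n^*\to\infty$ gives exactly the convergence you need. Your verification that both $u_n(t_n)$ and $v_n(t_n)$ lie in $R(t_n)X$ (the latter because $v_n|_{[0,\infty)}\in\EE([0,\infty))$ by restriction) is correct, so asymptotic compactness applies to each sequence and the contradiction goes through.
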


\begin{thm}\cite{C09}
\label{123giveAC}
Let $\EE$ be an evolutionary system satisfying $\iref{B1}$, 
$\iref{B2}$, and $\iref{B3}$ and so that every complete trajectory is 
strongly continuous. Then, $\EE$ is asymptotically compact.
\end{thm}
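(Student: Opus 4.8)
The plan is to prove Theorem~\ref{123giveAC} by showing that the autonomous version is a direct corollary of the generalized framework already developed in Section~\ref{energy}. The key observation is that an autonomous evolutionary system $\EE$, together with its time-shift structure, naturally gives rise to a generalized evolutionary system, and that the hypotheses \iref{B1}, \iref{B2}, \iref{B3} translate directly into \iref{A1}, \iref{A2}, \iref{A3}.

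First I would verify that $\EE$, viewed simply as a map associating to each $I \in \TT$ the set $\EE(I)$, satisfies the three axioms of a generalized evolutionary system (Definition~\ref{GES}). Conditions (2) and (3) of Definition~\ref{GES} are literally conditions (3) and (4) of Definition~\ref{evolutionarysystem}, so they carry over unchanged. For condition (1) of Definition~\ref{GES}, namely $\EE([\tau,\infty)) \ne \emptyset$ for every $\tau \in \R$, I would use the translation property (2) of the autonomous system: since $\EE([0,\infty)) \ne \emptyset$ and $\EE([\tau,\infty)) = \EE([0,\infty) + \tau) = \{u(\cdot) : u(\cdot + \tau) \in \EE([0,\infty))\}$, the set $\EE([\tau,\infty))$ is nonempty for each $\tau$. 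Thus every autonomous evolutionary system is in particular a generalized one.

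Next I would check that the labeled hypotheses transfer. For \iref{B1} $\Rightarrow$ \iref{A1}: we need $\EE([s,\infty))$ compact in $C([s,\infty);\Xw)$ for \emph{every} $s$, whereas \iref{B1} gives it only for $s=0$. Here the time-translation property (2) of Definition~\ref{evolutionarysystem} is essential: the shift map $u(\cdot) \mapsto u(\cdot + s)$ is an isometry (up to the reindexing in the metric $\mathrm{d}_{C([a,\infty);\Xw)}$) carrying $\EE([s,\infty))$ onto $\EE([0,\infty))$, so compactness at $s=0$ propagates to all $s$. The implications \iref{B2} $\Rightarrow$ \iref{A2} and \iref{B3} $\Rightarrow$ \iref{A3} follow by the same translation argument, since the energy inequality and the strong-convergence-a.e.\ property are stated at the base time in \iref{B2}, \iref{B3} and must be pushed to arbitrary starting time $s$ using the shift. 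Finally, the hypothesis that every complete trajectory is strongly continuous is exactly the assumption $\EE((-\infty,\infty)) \subseteq C((-\infty,\infty);\Xs)$ required by Theorem~\ref{123implyPAC}.

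Having established these translations, I would invoke Theorem~\ref{123implyPAC} to conclude that $\EE$, as a generalized evolutionary system, is pullback asymptotically compact. The last step is to argue that pullback asymptotic compactness for the generalized system implies asymptotic compactness in the autonomous sense: given $t_k \to \infty$ and $x_k \in R(t_k)X$, I would use the autonomous translation property to rewrite each $x_k$ as arising from a trajectory with a starting time $s_k = -t_k \to -\infty$ evaluated at a fixed time, thereby matching the definition of pullback asymptotic compactness, and then extract the relatively strongly compact subsequence. The main obstacle I anticipate is handling this translation between the ``forward'' formulation $R(t_k)X$ with $t_k \to +\infty$ used in the autonomous theory and the ``pullback'' formulation $P(t,s_k)X$ with $s_k \to -\infty$ used in the generalized theory; making the shift isometry and its effect on the series-weighted metric $\mathrm{d}_{C([a,\infty);\Xw)}$ fully rigorous is the delicate point, since the weighting by $2^{-n}$ depends on the left endpoint of the interval.
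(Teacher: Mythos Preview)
Your approach is correct, but note that the paper does not actually prove Theorem~\ref{123giveAC}: it is quoted as a known result from \cite{C09}. What the paper \emph{does} do, immediately after stating the cited theorems, is record precisely the dictionary you use---that every autonomous evolutionary system is a generalized one, that $\EE$ is asymptotically compact iff it is pullback asymptotically compact, and that \iref{B1}, \iref{B2}, \iref{B3} are equivalent to \iref{A1}, \iref{A2}, \iref{A3}. So your strategy of deducing Theorem~\ref{123giveAC} from Theorem~\ref{123implyPAC} via these equivalences is exactly the route the paper makes available, even though it does not spell it out as a proof.

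One remark on the ``delicate point'' you flag: the shift map is a genuine isometry between $C([s,\infty);\Xw)$ and $C([0,\infty);\Xw)$, with no subtlety. The metric $\mathrm{d}_{C([a,\infty);\Xw)}$ is defined using suprema over $[a,a+n]$, so under the substitution $t\mapsto t+s$ the windows $[s,s+n]$ map to $[0,n]$ and the series of weighted suprema is preserved term by term. Similarly, the identity $P(t,s)=R(t-s)$ stated in the paper gives $R(t_k)X=P(0,-t_k)X$ directly, so the passage from forward to pullback asymptotic compactness is a one-line relabeling, not an obstacle.
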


Now, we consider the existence of the pullback attractor in the context of 
an evolutionary system. Note that every evolutionary system is also a 
generalized evolutionary system. In this case, we have the following 
relationship for the set functions $P$ and $R$: Let $s\le t\in\R$, and let 
$A\subseteq X$, then 
\begin{equation*}
P(t,s)A=P(t-s,0)A=R(t-s)A.
\end{equation*}
The following results can also be easily verified:
\begin{enumerate}
\item A set $A$ is $\dd$-attracting if and only if the family of sets $A(t):=
A$ for all $t\in\R$ is $\dd$-pullback attracting.
\item $\EE$ is asymptotically compact if and only if $\EE$ is pullback 
asymptotically compact.
\item $\EE$ satisfies $\iref{B1}$ if and only if $\EE$ satisfies $\iref{A1}$.
\item $\EE$ satisfies $\iref{B2}$ if and only if $\EE$ satisfies $\iref{A2}$.
\item $\EE$ satisfies $\iref{B3}$ if and only if $\EE$ satisfies $\iref{A3}$.
\end{enumerate}
Moreover, for $B\subseteq X$ and $B(t):=B$ for all $t\in\R$, we have that 
the following invariance relations:
\begin{enumerate}
\addtocounter{enumi}{5}
\item $B$ is positively invariant if and only if $B(t)$ is pullback 
semi-invariant.
\item $B$ is invariant if and only if $B(t)$ is pullback invariant.
\item If $B$ is quasi-invariant, then $B(t)$ is pullback quasi-invariant.
\end{enumerate}
This gives us the following characterization of the $\wwd$-limit and 
$\WWd$-limit sets.

\begin{thm}
\label{ww=WW}
Let $\EE$ be an evolutionary system. Let $t\in\R$ and $A\subseteq X$ then, 
\begin{equation*}
\WWd(A,t)=\wwd(A) \quad \mathrm{(\bullet=s,w)}.
\end{equation*}
\end{thm}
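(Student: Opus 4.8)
I want to show $\WWd(A,t)=\wwd(A)$ for an evolutionary system, where the left side is the pullback omega-limit at time $t$ and the right side is the (autonomous) omega-limit. This should follow from the time-homogeneity encoded in the key relation $P(t,s)A=R(t-s)A$.

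The plan is to reduce the statement to the translation identity $P(t,r)A = R(t-r)A$, valid for evolutionary systems, and then carry out two compatible changes of index. Recall that
\begin{equation*}
\WWd(A,t)=\bigcap_{s\le t}\overline{\bigcup_{r\le s}P(t,r)A}^\bullet.
\end{equation*}
First I would rewrite the inner union. Replacing each $P(t,r)A$ by $R(t-r)A$ and substituting $\tau:=t-r$, the index $r$ ranging over $(-\infty,s]$ corresponds to $\tau$ ranging over $[t-s,\infty)$, so the inner union becomes $\bigcup_{\tau\ge t-s}R(\tau)A$. Note that $t-r\ge 0$ throughout, so each $R(t-r)$ is legitimately defined.

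Next I would treat the outer intersection. Setting $T:=t-s$, the map $s\mapsto t-s$ is an order-reversing bijection of $(-\infty,t]$ onto $[0,\infty)$, so intersecting over $s\le t$ is the same as intersecting over $T\ge 0$. Combining the two substitutions gives
\begin{equation*}
\WWd(A,t)=\bigcap_{T\ge0}\overline{\bigcup_{\tau\ge T}R(\tau)A}^\bullet,
\end{equation*}
which is exactly the definition of $\wwd(A)$. The same computation works verbatim for $\bullet=\mathrm{s}$ and $\bullet=\mathrm{w}$, since only the closure operation depends on the choice of metric while the index manipulations do not.

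The lone point demanding attention is that $P(t,r)A=R(t-r)A$ must hold as an equality, not merely an inclusion, so that nothing is lost in passing back and forth. This is guaranteed by property (2) in the definition of an evolutionary system: given $u\in\EE([r,\infty))$, the shift $v(\cdot):=u(\cdot+r)$ lies in $\EE([0,\infty))$ with $v(0)=u(r)$ and $v(t-r)=u(t)$, and this shift is a bijection between the two trajectory families, so the two set-valued maps coincide. Consequently the only real obstacle is bookkeeping, namely verifying that the order-reversing substitution lines up the ranges of the union and the intersection exactly, with no endpoint gained or lost.
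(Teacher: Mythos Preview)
Your proof is correct and rests on the same key identity $P(t,r)A=R(t-r)A$ that the paper uses. The only difference is cosmetic: the paper verifies both inclusions via the sequential characterizations of $\WWd(A,t)$ and $\wwd(A)$, whereas you work directly with the nested intersection--union definitions and perform the change of variables $\tau=t-r$, $T=t-s$ on the index sets, which is arguably cleaner.
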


\begin{proof}
Let $x\in\WWd(A,t)$. Then, there exist sequences $s_n\rightarrow-\infty$, 
$s_n\le t$, and $x_n\in P(t,s_n)A$ so that $x_n\xrightarrow{\dd}x$. Then, 
$(t-s_n)\rightarrow\infty$ for $(t-s_n)\ge0$, and $x_n\in P(t,s_n)A=P(t-s_n,0)A
=R(t-s_n)A$. That is, $x\in\wwd(A)$.

Now, let $x\in\wwd(A)$. Then, there exist sequences $t_n\rightarrow\infty$, 
$t_n\ge 0$, and $x_n\in R(t_n)A$ so that $x_n\xrightarrow{\dd}x$. But, $t_n=
t-(t-t_n)$. Therefore, 
\begin{equation*}
x_n\in R(t_n)A=R(t-(t-t_n))A=P(t-(t-t_n),0)A=P(t,t-t_n)A
\end{equation*}
for $t-t_n\le t$ and $t-t_n\rightarrow\infty$. That is, $x\in\WWd(A,t)$.
\end{proof}

Using this result as well as Theorem~\ref{globalattractorresults}, 
Theorem~\ref{exists}, and Theorem~\ref{weakattractorexists}, we have the 
following corollary.

\begin{thm}
\label{GA=PA}
Let $\EE$ be an evolutionary system. Then, the weak global attractor $\AAw$ and 
the weak pullback attractor $\AAw(t)$ exist, and $\AAw=\AAw(t)$ for each $t\in
\R$. Moreover, the strong global attractor $\AAs$ exists if and only if the 
strong pullback attractor $\AAs(t)$ exists, and $\AAs=\AAs(t)$.
\end{thm}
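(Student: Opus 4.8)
The plan is to funnel everything through the identification $\WWd(X,t)=\wwd(X)$ supplied by Theorem~\ref{ww=WW}, together with the omega-limit characterizations of the two kinds of attractors. The decisive observation is that for an evolutionary system the pullback omega-limit does not depend on $t$: it always equals the autonomous omega-limit of the entire phase space. Once this is in hand, the equivalence of the two notions of attractor reduces to the translation dictionary between ``attracting sets'' and ``pullback attracting constant families'' recorded in the list just before Theorem~\ref{ww=WW}.

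For the weak statement I would argue directly. By Theorem~\ref{globalattractorresults} the weak global attractor $\AAw$ exists and satisfies $\AAw=\www(X)$, and by Theorem~\ref{weakattractorexists} the weak pullback attractor $\AAw(t)$ exists. Theorem~\ref{exists} gives $\AAw(t)=\WWw(X,t)$, and Theorem~\ref{ww=WW} upgrades this to $\AAw(t)=\www(X)=\AAw$ for every $t\in\R$. This settles the weak case with no further work.

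For the strong case, suppose first that the strong global attractor $\AAs$ exists. Then $\AAs=\wws(X)$ by Theorem~\ref{globalattractorresults}, and $\AAs$ is strongly attracting by definition; invoking the equivalence that a set is $\ds$-attracting exactly when the constant family it generates is $\ds$-pullback attracting, the constant family $\wws(X)=\WWs(X,t)$ is strongly pullback attracting. Corollary~\ref{existsiff} then yields the existence of $\AAs(t)$, and Theorem~\ref{exists} together with Theorem~\ref{ww=WW} gives $\AAs(t)=\WWs(X,t)=\wws(X)=\AAs$.

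Conversely, assume $\AAs(t)$ exists. Theorem~\ref{exists} and Theorem~\ref{ww=WW} give $\AAs(t)=\WWs(X,t)=\wws(X)$, so in particular $\AAs(t)$ is independent of $t$ and is strongly closed by Lemma~\ref{wproperties}. Because $\AAs(t)$ is strongly pullback attracting, the same equivalence shows that $\wws(X)$ is $\ds$-attracting. The step I expect to require the most care is minimality: to conclude that $\wws(X)$ is the strong global attractor I would take any strongly closed, strongly attracting set $B$, pass to the constant family it generates, which is strongly closed and strongly pullback attracting by the equivalence above, and apply Lemma~\ref{minimal} to obtain $\wws(X)=\WWs(X,t)\subseteq B$. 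Thus $\wws(X)$ is the minimal strongly closed, strongly attracting set, i.e. $\AAs=\wws(X)=\AAs(t)$.
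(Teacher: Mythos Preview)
Your proof is correct and follows essentially the same route as the paper: both reduce everything to $\WWd(X,t)=\wwd(X)$ via Theorem~\ref{ww=WW} and then invoke the attracting/pullback-attracting dictionary together with the omega-limit characterizations (Theorems~\ref{globalattractorresults}, \ref{exists}, \ref{weakattractorexists}, Corollary~\ref{existsiff}). Your converse direction is actually more explicit than the paper's, which simply writes ``an analogous argument shows\ldots''; you spell out the minimality step by passing an arbitrary closed attracting $B$ to the constant family and applying Lemma~\ref{minimal}, which is the right way to fill in that gap.
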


\begin{proof}
Using Theorem~\ref{globalattractorresults}, we know that the weak global 
attractor $\AAw$ exists, and $\AAw=\www(X)$.  By Theorem~\ref{exists} and 
Theorem~\ref{weakattractorexists}, the weak pullback attractor $\AAw(t)$ 
exists and $\AAw(t)=\WWw(X,t)$. By Theorem~\ref{ww=WW}, we have that 
\begin{equation*}
\AAw=\www(X)=\WWw(X,t)=\AAw(t).
\end{equation*}

Now, suppose the strong global attractor $\AAs$ exists. Then, as in the above 
section, we have that 
\begin{equation*}
\AAs=\wws(X)=\WWs(X,t).
\end{equation*}
But, then $\WWw(X,t)$ is $\ds$-attracting which means that it is $\ds$-pullback 
attracting. Therefore, the strong pullback attractor $\AAs(t)$ exists, and 
$\AAs(t)=\AAs$. An analogous argument shows that if the strong pullback 
attractor $\AAs(t)$ exists, then the strong global attractor $\AAs$ exists and 
$\AAs=\AAs(t)$.
\end{proof}

Furthermore, if $\EE$ is asymptotically compact, then $\EE$ is pullback 
asymptotically compact, and the strong global attractor $\AAs$ and strong 
pullback attractor $\AAs(t)$ both exist, and $\AAs(t)=\AAs$.

\subsection{Nonautonomous Case}

The modern theory of uniform attractors (using a symbol space across which 
attraction is uniform) was first introduced by Chepyzhov and Vishik. They 
applied this framework to the 2D Navier-Stokes equations with an appropriate 
forcing term. For more information on this theory, see \cite{CV02}. Later, a 
weak uniform attractor was proved for the 3D Navier-Stokes equations by 
Kapustyan and Valero \cite{KV07}. Using the framework of evolutionary systems, 
Cheskidov and Lu added a structure theorem and tracking properties of the 
uniform attractor \cite{CL12}. We follow the closely-related framework in 
\cite{CL12} to compare the structure of the weak uniform attractor to the 
weak pullback attractor.

Following the theory of \cite{CV02} the concept of symbols is introduced. So, 
let $\Sig$ be a parameter set and $\{T(s)\}_{s\ge0}$ be a family of operators 
acting on $\Sig$ satisfying $T(s)\Sig=\Sig$, for each $s\ge0$. Any element 
$\sig\in\Sig$ will be called a (time) symbol and $\Sig$ will be called the 
(time) symbol space. For instance, in many applications $\{T(s)\}$ is the 
translation semigroup and $\Sig$ is the translation family of time dependent 
items of the system being considered or its closure in some appropriate 
topological space.

\begin{define}\cite{CL12}
\label{nonautevolutionarysystem}
A family of maps $\EE_\sig$, $\sig\in\Sig$ which for each $\sig\in\Sig$ 
associates to each $I\in\TT$ a subset $\EE_\sig(I)\subseteq\FF(I)$ will be 
called a nonautonomous evolutionary system if the following conditions are 
satisfied:
\begin{enumerate}
\item $\EE_\sig([\tau,\infty))\ne\emptyset$ for each $\tau\in\R$.
\item $\EE_\sig(I+s)=\{u(\cdot):u(\cdot+s)\in\EE_{T(s)\sig}(I)\}$ for each 
$s\ge0$.
\item $\{u(\cdot)|_{I_2}:u(\cdot)\in\EE_\sig(I_1)\}\subseteq\EE_\sig(I_2)$ for 
each $I_1,I_2\in\TT$, $I_2\subseteq I_1$.
\item $\EE_\sig((-\infty,\infty))=\{u(\cdot):u(\cdot)|_{[\tau,\infty)}\in
\EE_\sig([\tau,\infty))\forall\tau\in\R\}$.
\end{enumerate}
\end{define}

Analogous to our previous definitions, $\EE_\sig(I)$ is called the set of all 
trajectories with respect to the symbol $\sig$ on the time interval $I$. 
Trajectories in $\EE_\sig((-\infty,\infty))$ are called complete with respect 
to $\sig$. Note that if we fix any symbol $\sig\in\Sig$ in a nonautonomous 
evolutionary system $\EE_\sig$, then we obtain a generalized evolutionary 
system. On the other hand, if we let $\Sig:=\R$ with $T(s)t:=t+s$, the 
translation semigroup, as well as 
\begin{equation*}
\EE_t([T,\infty)):=\{u(\cdot):u(\cdot-t)\in\EE([T+t,\infty))\},
\end{equation*}
we obtain a nonautonomous evolutionary system from a given generalized 
evolutionary system. 

For every $t\ge\tau$, $\tau\in\R$, $\sig\in\Sig$, the map 
\begin{equation*}
R_\sig(t,\tau):\PP(X)\rightarrow\PP(X)
\end{equation*}
is defined by 
\begin{equation}
\label{shiftRinunion}
R_\sig(t,\tau)A := \{u(t):u(\tau)\in A, u\in\EE_\sig([\tau,\infty))\} 
\mathrm{~for~}A\subseteq X.
\end{equation}
By the assumptions on $\EE_\sig$ for each $\sig\in\Sig$, it is found that  
\begin{equation*}
R_\sig(t,\tau)A\subseteq R_\sig(t,s)R_\sig(s,\tau)A
\end{equation*}
for each $A\subseteq X$, $t\ge s\ge\tau\in\R$. Using the following Lemma, 
one can reduce a nonautonomous evolutionary system to an evolutionary system.

\begin{lem}\cite{CL12}
\label{trajshift}
Let $\tau_0\in\R$ be fixed. Then, for any $\tau\in\R$ and $\sig\in\Sig$, 
there exists at least one $\sig'\in\Sig$ so that 
\begin{equation*}
\EE_\sig([\tau,\infty))=\{u(\cdot):u(\cdot+\tau-\tau_0)\in\EE_{\sig'}([\tau,
\infty))\}.
\end{equation*}
\end{lem}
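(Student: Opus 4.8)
The plan is to exploit property~(2) of Definition~\ref{nonautevolutionarysystem}, the shift-intertwining relation $\EE_\sig(I+s)=\{u(\cdot):u(\cdot+s)\in\EE_{T(s)\sig}(I)\}$, which holds only for $s\ge0$, together with the standing assumption $T(s)\Sig=\Sig$, so that each shift $T(s)$ acts surjectively on the symbol space. The goal is to produce a symbol $\sig'\in\Sig$ intertwining the trajectories $\EE_\sig([\tau,\infty))$ started at time $\tau$ with the trajectories $\EE_{\sig'}([\tau_0,\infty))$ started at the fixed reference time $\tau_0$. Since property~(2) permits only nonnegative shifts, I would split the argument according to the sign of $\tau-\tau_0$.

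If $\tau\ge\tau_0$, set $s:=\tau-\tau_0\ge0$ and $I:=[\tau_0,\infty)$, so that $I+s=[\tau,\infty)$. Property~(2) then reads
\begin{equation*}
\EE_\sig([\tau,\infty))=\{u(\cdot):u(\cdot+\tau-\tau_0)\in\EE_{T(\tau-\tau_0)\sig}([\tau_0,\infty))\},
\end{equation*}
so the choice $\sig':=T(\tau-\tau_0)\sig$ settles this case at once, with no appeal to surjectivity.

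The harder case is $\tau<\tau_0$, in which the required shift $\tau-\tau_0$ is negative and property~(2) cannot be applied directly. Here I would invert the shift: since $\tau_0-\tau>0$, the surjectivity $T(\tau_0-\tau)\Sig=\Sig$ furnishes (at least one, which is all the lemma claims) symbol $\sig'\in\Sig$ with $T(\tau_0-\tau)\sig'=\sig$. Applying property~(2) with $s:=\tau_0-\tau$ and $I:=[\tau,\infty)$, so that $I+s=[\tau_0,\infty)$, and substituting $T(\tau_0-\tau)\sig'=\sig$, I obtain
\begin{equation*}
\EE_{\sig'}([\tau_0,\infty))=\{v(\cdot):v(\cdot+\tau_0-\tau)\in\EE_\sig([\tau,\infty))\}.
\end{equation*}

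It then remains to rewrite this relation in the form claimed by the lemma. For $u\in\FF([\tau,\infty))$, put $w:=u(\cdot+\tau-\tau_0)$, a function on $[\tau_0,\infty)$; a direct substitution gives $w(\cdot+\tau_0-\tau)=u$, so the preceding identity says exactly that $w\in\EE_{\sig'}([\tau_0,\infty))$ if and only if $u\in\EE_\sig([\tau,\infty))$, which is the desired equality of sets. I expect the main obstacle to be the case $\tau<\tau_0$: because property~(2) is one-directional, the assumption $T(s)\Sig=\Sig$ is genuinely needed to run the shift backwards, and it is precisely this use of surjectivity that prevents asserting uniqueness of $\sig'$. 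The remaining work is the routine composition-of-shifts bookkeeping in the final substitution.
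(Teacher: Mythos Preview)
Your argument is correct. The paper does not supply its own proof of this lemma; it is quoted from \cite{CL12}, so there is nothing in-paper to compare against. Your two-case split according to the sign of $\tau-\tau_0$, applying property~(2) of Definition~\ref{nonautevolutionarysystem} directly when $\tau\ge\tau_0$ and invoking the surjectivity $T(s)\Sig=\Sig$ to run the shift backwards when $\tau<\tau_0$, is the natural (and essentially the only) route given the one-sided shift relation. You also implicitly correct what appears to be a typo in the displayed statement: the right-hand side should read $\EE_{\sig'}([\tau_0,\infty))$ rather than $\EE_{\sig'}([\tau,\infty))$, as is clear both from the domain of $u(\cdot+\tau-\tau_0)$ and from how the lemma is used immediately afterward (to compare $R_\sig(t+\tau,\tau)$ with $R_\sig(t,0)$).
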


Thus, it is found that for $A\subseteq X$, $\tau\in\R$ and $t\ge0$,
\begin{equation*}
\bigcup_{\sig\in\Sig}R_\sig(t,0)A=\bigcup_{\sig\in\Sig}R_\sig(t+\tau,\tau)A.
\end{equation*}
Moreover, defining 
\begin{equation*}
\EE_\Sig(I):=\bigcup_{\sig\in\Sig}\EE_\sig(I)
\end{equation*}
for $I\in\TT$, then $\EE_\Sig$ defines an autonomous evolutionary system. 
Also, for $A\subseteq X$, $t\ge 0$, the map $R_\Sig(t):\PP(X)\rightarrow
\PP(X)$ is given by
\begin{equation*}
R_\Sig(t)A:=\bigcup_{\sig\in\Sig}R_\sig(t,0)A.
\end{equation*}
Let $\wwd^\Sig(A)$ be the corresponding omega-limit set for $\EE_\Sig$. That 
is, 
\begin{equation*}
\wwd^\Sig(A):=\bigcap_{T\ge0}\overline{\bigcup_{t\ge T} R_\Sig(t)}^\bullet 
      =\bigcap_{T\ge0}\overline{\bigcup_{t\ge T}
                     \bigcup_{\sig\in\Sig} R_\sig(t,0)}^\bullet.
\end{equation*}

\begin{define}\cite{CL12}
For the autonomous evolutionary system $\EE_\Sig$, we denote its $\dd$-global 
attractor (if it exists) by $\AAd^\Sig$. We call $\AAd^\Sig$ the $\dd$-uniform 
attractor for $\EE_\Sig$.
\end{define}

The following results for $\EE_\Sig$ are then attained using 
Theorem~\ref{globalattractorresults}.

\begin{thm}\cite{CL12}
\label{unifattractor1}
Let $\EE_\Sig$ be a nonautonomous evolutionary system. Then, if the 
$\dd$-uniform attractor exists, then $\AAd^\Sig=\wwd^\Sig(X)$. Also, the weak 
uniform attractor $\AAw^\Sig$ exists.
\end{thm}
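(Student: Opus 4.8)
The plan is to obtain both conclusions as immediate corollaries of Theorem~\ref{globalattractorresults}, applied to the autonomous evolutionary system $\EE_\Sig := \bigcup_{\sig\in\Sig}\EE_\sig$. Indeed, the $\dd$-uniform attractor $\AAd^\Sig$ is \emph{defined} to be the $\dd$-global attractor of $\EE_\Sig$, and $\wwd^\Sig(X)$ is by construction the $\wwd$-limit set of $\EE_\Sig$. Thus, once $\EE_\Sig$ is known to be an autonomous evolutionary system in the sense of Definition~\ref{evolutionarysystem}, part~(1) of Theorem~\ref{globalattractorresults} gives $\AAd^\Sig = \wwd^\Sig(X)$ whenever $\AAd^\Sig$ exists, and part~(2) gives the existence of the weak global attractor of $\EE_\Sig$, i.e. the weak uniform attractor $\AAw^\Sig$. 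No further argument is needed beyond these two citations.

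The substantive step is therefore the verification (already asserted in the preceding discussion) that $\EE_\Sig$ satisfies the four axioms of Definition~\ref{evolutionarysystem}. Axioms~(1) and~(3) pass to the union at once: nonemptiness of $\EE_\Sig([0,\infty))$ follows from that of any single $\EE_\sig([0,\infty))$, and for $I_2 \subseteq I_1$ one has $\{u|_{I_2} : u \in \EE_\Sig(I_1)\} = \bigcup_{\sig} \{u|_{I_2} : u \in \EE_\sig(I_1)\} \subseteq \bigcup_{\sig} \EE_\sig(I_2) = \EE_\Sig(I_2)$. Axiom~(4) requires a little more care, since different restrictions $u|_{[T,\infty)}$ could a priori be realized by different symbols: the inclusion ``$\subseteq$'' is immediate from the corresponding axiom for each $\EE_\sig$, while ``$\supseteq$'' amounts to producing a single symbol realizing all these restrictions, obtained by the gluing/diagonalization argument already used for generalized and nonautonomous systems.

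The main obstacle is the autonomous shift axiom~(2), namely $\EE_\Sig(I+s) = \{u(\cdot) : u(\cdot+s) \in \EE_\Sig(I)\}$ for all $s \in \R$, which is where the hypothesis $T(s)\Sig = \Sig$ enters. For $s \ge 0$ I would union the nonautonomous shift identity $\EE_\sig(I+s) = \{u(\cdot) : u(\cdot+s) \in \EE_{T(s)\sig}(I)\}$ over $\sig \in \Sig$; since $T(s)$ maps $\Sig$ onto $\Sig$, the symbol $T(s)\sig$ ranges over all of $\Sig$ as $\sig$ does, so $\bigcup_{\sig} \EE_{T(s)\sig}(I) = \EE_\Sig(I)$ and the desired identity follows. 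For $s < 0$, for which $T(s)$ is undefined, I would recover the identity from the $s \ge 0$ case by the time-translation substitution $J := I + s$, using that $v \mapsto v(\cdot + |s|)$ is a bijection between $\EE_\Sig(J + |s|)$ and $\EE_\Sig(J)$. Once axiom~(2) is in place, the two attractor statements are nothing more than Theorem~\ref{globalattractorresults} read off for $\EE_\Sig$.
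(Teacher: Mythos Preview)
Your approach is exactly the paper's: the sentence immediately preceding the theorem says these results ``are then attained using Theorem~\ref{globalattractorresults}'' applied to the autonomous system $\EE_\Sig$, and the theorem itself is cited from \cite{CL12} without a separate proof. Your reduction and your verification of axioms~(1)--(3) are correct and match this.

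One caveat on an unnecessary step: your treatment of axiom~(4) is not sound. The ``gluing/diagonalization argument already used'' extracts convergent subsequences of \emph{trajectories} in $C([s,\infty);\Xw)$ using \iref{A1}; it does not, and cannot, produce a single symbol $\sig\in\Sig$ realizing all restrictions $u|_{[T,\infty)}$, since no topology or compactness on $\Sig$ has been assumed. Fortunately this does not matter here: parts~(1) and~(2) of Theorem~\ref{globalattractorresults} (and their analogues Theorems~\ref{exists} and~\ref{weakattractorexists}) use only the map $R(t)$ and the $\dw$-compactness of $X$, never complete trajectories, so axiom~(4) plays no role in Theorem~\ref{unifattractor1}. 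You can simply drop that paragraph.
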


Here are additional assumptions imposed on $\EE_\Sig$.
\begin{enumerate}
\refitem{C1} $\EE_\Sig([0,\infty))$ is precompact in $C([0,\infty);\Xw)$.
\refitem{C2} Assume that $X$ is a set in some Banach space $H$ satisfying the 
Radon-Riesz property with the norm denoted by $\abs{\cdot}$, such that 
$\ds(x,y):=\abs{x-y}$ for all $x,y\in X$ and $\dw$ induces the weak topology on 
$X$. Assume also that for any $\ee>0$, there exists $\delta:=\delta(\ee)$, so 
that for each $u\in\EE_\Sig([0,\infty))$ and $t>0$, 
\begin{equation*}
\abs{u(t)}\le\abs{u(t_0)}+\ee,
\end{equation*}
for $t_0$ a.e. in $(t-\delta,t)$.
\refitem{C3} Let $u_k\in\EE_\Sig([0,\infty))$ be so that $u_k$ is 
$d_{C([0,T];\Xw)}$-Cauchy sequence in $C([0,T];\Xw)$ for some $T>0$. Then, 
$u_k(t)$ is $\ds$-Cauchy for a.e. $t\in[0,T]$.
\end{enumerate}
Next, the closure of the evolutionary system $\EE_\Sig$ is introduced. This is 
given by $\bar{\EE}$ defined as follows:
\begin{equation*}
\bar{\EE}([\tau,\infty))
   :=\overline{\EE_\Sig([\tau,\infty))}^{C([\tau,\infty);\Xw)}
\end{equation*}
for each $\tau\in\R$. This is an evolutionary system. Let $\bar{\wwd}(A)$ and 
$\bar{\AAd}$ be the corresponding omega-limit set and global attractor for 
$\bar{\EE}$, respectively. Then, $\bar{\EE}$ has the following properties:

\begin{lem}\cite{CL12}
\label{closureprops}
If $\EE_\Sig$ satisfies $\iref{C1}$, then $\bar{\EE}$ satisfies $\iref{B1}$. 
Moreover, if $\EE_\Sig$ satisfies $\iref{C2}$ and $\iref{C3}$, then 
$\bar{\EE}$ satisfies $\iref{B2}$ and $\iref{B3}$.
\end{lem}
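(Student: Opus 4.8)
The plan is to transfer each property from $\EE_\Sig$ to its weak closure $\bar{\EE}$, the recurring tool being the observation that any $u\in\bar{\EE}([\tau,\infty))$ is not merely a weak limit of $\EE_\Sig$-trajectories but also their strong \emph{pointwise} limit off a null set. Concretely, if $u\in\bar{\EE}([\tau,\infty))$ there are $v_k\in\EE_\Sig([\tau,\infty))$ with $v_k\to u$ in $C([\tau,\infty);\Xw)$; this sequence is $\mathrm{d}_{C([\tau,T];\Xw)}$-Cauchy, so $\iref{C3}$ gives that $\{v_k(t)\}$ is $\ds$-Cauchy, hence $\ds$-convergent, for a.e. $t$. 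Since its strong limit must coincide with its weak limit $u(t)$, we obtain $v_k(t)\xrightarrow{\ds}u(t)$ for a.e. $t$. I would isolate this characterization first, as both $\iref{B2}$ and $\iref{B3}$ rest on it.

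For $\iref{B1}$ the work is purely topological: since $X$ is $\dw$-compact it is complete, so $C([0,\infty);\Xw)$ is a complete metric space, and in a complete space a precompact set has compact closure. Thus $\iref{C1}$ gives that $\bar{\EE}([0,\infty))=\overline{\EE_\Sig([0,\infty))}^{C([0,\infty);\Xw)}$ is compact, which is exactly $\iref{B1}$. For $\iref{B2}$ I would fix $u\in\bar{\EE}([0,\infty))$, $t>0$ and $\ee>0$, take $\delta=\delta(\ee)$ from $\iref{C2}$ (this is uniform in the trajectory, so the resulting $\delta$ for $\bar{\EE}$ is again independent of $u$), and choose $v_k$ as above. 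For each $k$ the inequality $\abs{v_k(t)}\le\abs{v_k(t_0)}+\ee$ holds for $t_0$ off a null set $N_k\subset(t-\delta,t)$; discarding $\bigcup_kN_k$ together with the null set where $v_k(t_0)\xrightarrow{\ds}u(t_0)$ fails, I fix any $t_0$ in the remaining full-measure set. There $\abs{v_k(t_0)}\to\abs{u(t_0)}$, so passing to the lower limit in $k$ and using weak lower semicontinuity of $\abs{\cdot}$ (a consequence of the Radon--Riesz property, since $v_k(t)\xrightarrow{\dw}u(t)$) yields $\abs{u(t)}\le\liminf_k\abs{v_k(t)}\le\abs{u(t_0)}+\ee$, which is $\iref{B2}$.

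The main obstacle is $\iref{B3}$, because weak $C$-approximation of a trajectory does not control its strong pointwise values, so one cannot simply replace each $u_n$ by a nearby $\EE_\Sig$-trajectory and carry the strong a.e. convergence across. To circumvent this I would, for each $n$, apply the characterization to $u_n$ to get $w_j\to u_n$ in $C([0,T];\Xw)$ with $w_j(t)\xrightarrow{\ds}u_n(t)$ a.e., and then invoke Egorov's theorem on the finite-measure interval $[0,T]$ to select $v_n:=w_{j_n}\in\EE_\Sig([0,\infty))$ with $\mathrm{d}_{C([0,T];\Xw)}(v_n,u_n)<1/n$ and $\ds(v_n(t),u_n(t))<1/n$ for all $t$ outside a set $E_n$ of measure $<2^{-n}$. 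By Borel--Cantelli a.e. $t$ lies in only finitely many $E_n$, hence $\ds(v_n(t),u_n(t))\to0$ a.e. Since $v_n\to u$ in $C([0,T];\Xw)$, the sequence $\{v_n\}\subset\EE_\Sig$ is Cauchy, so $\iref{C3}$ and the matching of weak and strong limits give $v_n(t)\xrightarrow{\ds}u(t)$ a.e.; the triangle inequality $\ds(u_n(t),u(t))\le\ds(u_n(t),v_n(t))+\ds(v_n(t),u(t))$ then yields $u_n(t)\xrightarrow{\ds}u(t)$ for a.e.\ $t\in[0,T]$, which is $\iref{B3}$.

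The delicate step to get right is precisely this simultaneous choice of $v_n$: weakly close to $u_n$ on all of $[0,T]$, yet strongly close off a controllably small set. Egorov's theorem is exactly what supplies the second requirement, and the summability $\sum_n\abs{E_n}<\infty$ is what converts it into a.e. strong closeness. I expect no difficulty in $\iref{B1}$ and only bookkeeping with null sets in $\iref{B2}$; the real content lives in organizing the Egorov/Borel--Cantelli diagonalization for $\iref{B3}$.
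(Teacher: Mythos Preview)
The paper does not supply a proof of this lemma; it is quoted from \cite{CL12} and left uncited in detail. So there is nothing in the present paper to compare your argument against. That said, your proposal is a correct and well-organized proof of the result.

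A few minor remarks. First, weak lower semicontinuity of the norm (used to pass from $\liminf_k\abs{v_k(t)}$ to $\abs{u(t)}$) is a general Banach-space fact and does not depend on the Radon--Riesz property; the latter is only needed to upgrade weak-plus-norm convergence to strong convergence elsewhere. Second, in your characterization step you infer from ``$\ds$-Cauchy'' that the sequence is ``$\ds$-convergent''; strictly this uses that the limit $u(t)$ is already known to lie in $X$ (since $u\in\bar{\EE}([\tau,\infty))\subset\FF([\tau,\infty))$ takes values in $X$), so the Cauchy sequence in $H$ converges to a point that is automatically in $X$. This is fine but worth a word.

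The substantive part is your treatment of \iref{B3}, and the Egorov/Borel--Cantelli diagonalization you describe is exactly the right device: picking $v_n\in\EE_\Sig$ simultaneously $1/n$-close to $u_n$ in $C([0,T];\Xw)$ and $\ds$-close off a set of measure $<2^{-n}$ yields a sequence in $\EE_\Sig$ to which \iref{C3} applies directly, and the triangle inequality finishes. This is a clean and standard way to prove the lemma.
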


\begin{thm}\cite{CL12}
\label{unifattractor2}
Assume that $\EE_\Sig$ satisfies $\iref{C1}$. Then, the weak uniform attractor 
exists by Theorem~\ref{unifattractor1}). Also, 
\begin{enumerate}
\item $\AAw^\Sig=\www^\Sig(X)=\bar{\www}(X)=\bar{\wws}(X)=\bar{\AAw}=\{u_0\in 
X: u_0=u(0)\mathrm{~for~some~}u\in\bar{\EE}((-\infty,\infty))\}$.
\item $\AAw^\Sig$ is the maximal invariant and maximal quasi-invariant set 
with respect to $\bar{\EE}$.
\item (Weak uniform tracking property) For any $\ee>0$, there exists a $t_0:=
t_0(\ee,T)$ so that for any $t^*>t_0$, every trajectory $u\in\EE_\Sig([0,
\infty))$ satisfies $d_{C([t^*,\infty);\Xw)}(u,v)<\ee$ for some complete 
trajectory $v\in\bar{\EE}((-\infty,\infty))$.
\end{enumerate}
If $\EE_\Sig$ is an asymptotically compact evolutionary system (not necessarily 
satisfying $\iref{C1}$), then 
\begin{enumerate}
\addtocounter{enumi}{3}
\item The strong uniform attractor $\AAs^\Sig$ exists, is strongly compact, 
and $\AAs^\Sig=\AAw^\Sig$.
\end{enumerate}
Furthermore, if $\EE_\Sig$ is asymptotically compact and satisfies $\iref{C1}$, 
then
\begin{enumerate}
\addtocounter{enumi}{4}
\item (Strong uniform tracking property) For any $\ee>0$ and $T>0$, there 
exists $t_0:=t_0(\ee,T)$ so that for $t^*>t_0$, every trajectory $u\in
\EE_\Sig([0,\infty))$ satisfies $\ds(u(t),v(t))<\ee$ for each $t\in
[t^*,t^*+T]$, for some complete trajectory $v\in\bar{\EE}((-\infty,\infty))$.
\end{enumerate}
\end{thm}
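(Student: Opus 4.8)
The plan is to reduce everything to the autonomous structure theory (Theorem~\ref{globalattractorresults} and Theorem~\ref{globalattractorAC}) applied to two autonomous evolutionary systems: $\EE_\Sig$ itself, and its weak closure $\bar{\EE}$. The point of passing to $\bar{\EE}$ is that, although $\EE_\Sig$ is only assumed precompact (\iref{C1}), Lemma~\ref{closureprops} guarantees that $\bar{\EE}$ satisfies \iref{B1} (and, under \iref{C2}--\iref{C3}, also \iref{B2}--\iref{B3}), so the full strength of Theorem~\ref{globalattractorresults} is available for $\bar{\EE}$. The single bridge connecting the two systems is the identity $\www^\Sig(X)=\bar{\www}(X)$, from which all the listed equalities follow.

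First I would record $\AAw^\Sig=\www^\Sig(X)$: since $\EE_\Sig$ is an autonomous evolutionary system, this is exactly Theorem~\ref{unifattractor1} (equivalently, Theorem~\ref{globalattractorresults}(1)--(2)). Next I would prove the bridge $\www^\Sig(X)=\bar{\www}(X)$. The inclusion ``$\subseteq$'' is immediate from $\EE_\Sig([0,\infty))\subseteq\bar{\EE}([0,\infty))$, which gives $R_\Sig(t)X\subseteq\bar{R}(t)X$ for every $t$, where $\bar{R}(t)$ is the evolution map of $\bar{\EE}$. For ``$\supseteq$'', take $x\in\bar{\www}(X)$, so $\bar{u}_k(t_k)\xrightarrow{\dw}x$ with $t_k\to\infty$ and $\bar{u}_k\in\bar{\EE}([0,\infty))$; since each $\bar{u}_k$ is a $C([0,\infty);\Xw)$-limit of trajectories in $\EE_\Sig([0,\infty))$, I pick $u_k\in\EE_\Sig([0,\infty))$ with $\dw(u_k(t_k),\bar{u}_k(t_k))<1/k$, so that $u_k(t_k)\in R_\Sig(t_k)X$ and $u_k(t_k)\xrightarrow{\dw}x$, giving $x\in\www^\Sig(X)$. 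With the bridge in hand, applying Theorem~\ref{globalattractorresults}(3)--(5) to $\bar{\EE}$ yields the chain $\bar{\AAw}=\bar{\www}(X)=\bar{\wws}(X)=\{u(0):u\in\bar{\EE}((-\infty,\infty))\}$ (part 1), the maximality statement (part 2), and the weak uniform tracking property. For part 3 I would note that any $u\in\EE_\Sig([0,\infty))$ also lies in $\bar{\EE}([0,\infty))$, so the tracking trajectory produced by Theorem~\ref{globalattractorresults}(5) for $\bar{\EE}$ serves as the required complete trajectory $v\in\bar{\EE}((-\infty,\infty))$.

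For part 4, the hypothesis is asymptotic compactness of $\EE_\Sig$ itself, so Theorem~\ref{globalattractorAC}(1) applies directly to the autonomous system $\EE_\Sig$, producing a strongly compact strong global attractor $\AAs^\Sig=\AAw^\Sig$. Part 5 is the delicate one. I would mimic the contradiction argument of the strong tracking proof (cf.\ Theorem~\ref{strongtracking}): assuming failure, I obtain $t_k^*\to\infty$, trajectories $u_k\in\EE_\Sig([0,\infty))$, and, from the weak tracking of part 3, complete trajectories $v_k\in\bar{\EE}((-\infty,\infty))$ with $\sup_{t\in[t_k^*,t_k^*+T]}\dw(u_k(t),v_k(t))\to0$ yet some $\hat{t}_k\in[t_k^*,t_k^*+T]$ with $\ds(u_k(\hat{t}_k),v_k(\hat{t}_k))\ge\ee/2$. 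Asymptotic compactness of $\EE_\Sig$ makes $\{u_k(\hat{t}_k)\}$ strongly precompact (here $\hat{t}_k\to\infty$ and $u_k(\hat{t}_k)\in R_\Sig(\hat{t}_k)X$), so $u_k(\hat{t}_k)\xrightarrow{\ds}x$ along a subsequence. To upgrade the $v_k$-values I would first establish $\bar{\wws}(X)=\bar{\www}(X)$ together with its strong compactness: indeed $\wws^\Sig(X)=\www^\Sig(X)$ by part 4, and combining this with the bridge and $R_\Sig(t)X\subseteq\bar{R}(t)X$ sandwiches $\bar{\wws}(X)$ between $\wws^\Sig(X)$ and $\bar{\www}(X)$, forcing equality. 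Since every complete trajectory of $\bar{\EE}$ takes values in $\bar{\AAw}=\bar{\wws}(X)$, the sequence $\{v_k(\hat{t}_k)\}$ lies in a strongly compact set; a subsequence converges strongly to some $y$, and $\dw(u_k(\hat{t}_k),v_k(\hat{t}_k))\to0$ forces $y=x$, whence $\ds(u_k(\hat{t}_k),v_k(\hat{t}_k))\to0$, a contradiction.

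The main obstacle is precisely this last upgrade in part 5: asymptotic compactness is a hypothesis on $\EE_\Sig$, while the regularity needed for weak tracking (\iref{B1}) is available only for the closure $\bar{\EE}$, and $\bar{\EE}$ is not obviously asymptotically compact. The resolution is to avoid proving asymptotic compactness of $\bar{\EE}$ outright and instead confine the tracking trajectories to the strongly compact invariant set $\bar{\wws}(X)=\bar{\www}(X)$, which transfers strong compactness from $\EE_\Sig$ to the $\bar{\EE}$-side exactly where it is needed.
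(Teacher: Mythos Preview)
The paper does not prove this theorem: it is stated with the citation \cite{CL12} and no argument is given in the paper itself. So there is no ``paper's own proof'' to compare against; the result is imported from Cheskidov--Lu.

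That said, your reconstruction is essentially the natural one and is correct. The key ingredients---the bridge $\www^\Sig(X)=\bar{\www}(X)$, the transfer of \iref{C1} to \iref{B1} for $\bar{\EE}$ via Lemma~\ref{closureprops}, and the direct appeal to Theorems~\ref{globalattractorresults} and~\ref{globalattractorAC}---are exactly how the result is assembled in the cited reference. Your handling of part~5 is also sound: the subtlety you identify (asymptotic compactness is assumed only for $\EE_\Sig$, while \iref{B1} holds only for $\bar{\EE}$) is real, and your resolution---sandwiching $\bar{\wws}(X)$ between $\wws^\Sig(X)$ and $\bar{\www}(X)=\www^\Sig(X)=\wws^\Sig(X)$ to conclude it is strongly compact, then using that complete $\bar{\EE}$-trajectories take values in this compact set---is a clean way to close the gap. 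One could alternatively argue that $\bar{\EE}$ inherits asymptotic compactness from $\EE_\Sig$ directly (any $\bar{R}(t_k)X$-sequence is $\dw$-approximated by an $R_\Sig(t_k)X$-sequence, whose strong limit point must coincide with the weak limit of the original), but your route via the attractor is equally valid and arguably more transparent.
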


\begin{thm}\cite{CL12}
\label{123NES}
Let $\EE_\Sig$ be an evolutionary system satisfying $\iref{C1}$, $\iref{C2}$, 
and $\iref{C3}$, and assume that $\bar{\EE}((-\infty,\infty))\subseteq 
C((-\infty,\infty);\Xs)$. Then, $\EE_\Sig$ is asymptotically compact.
\end{thm}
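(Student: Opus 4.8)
The plan is to reduce the claim to the corresponding autonomous result, Theorem~\ref{123giveAC}, applied to the weak closure system $\bar{\EE}$, and then to transfer asymptotic compactness from $\bar{\EE}$ back down to $\EE_\Sig$ using the inclusion $\EE_\Sig(I)\subseteq\bar{\EE}(I)$. The point is that all of the real analytic work has already been packaged into Lemma~\ref{closureprops} (which promotes the $\iref{C1}$--$\iref{C3}$ hypotheses on $\EE_\Sig$ to the $\iref{B1}$--$\iref{B3}$ hypotheses on $\bar{\EE}$) and into Theorem~\ref{123giveAC} (which extracts asymptotic compactness from $\iref{B1}$--$\iref{B3}$ plus strong continuity of complete trajectories). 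This theorem is essentially their composition.

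First I would verify that $\bar{\EE}$ meets every hypothesis of Theorem~\ref{123giveAC}. Since $\EE_\Sig$ satisfies $\iref{C1}$, Lemma~\ref{closureprops} gives that $\bar{\EE}$ satisfies $\iref{B1}$; since $\EE_\Sig$ satisfies $\iref{C2}$ and $\iref{C3}$, the same lemma gives that $\bar{\EE}$ satisfies $\iref{B2}$ and $\iref{B3}$. The last ingredient Theorem~\ref{123giveAC} requires is that every complete trajectory of $\bar{\EE}$ be strongly continuous, and this is exactly the standing assumption $\bar{\EE}((-\infty,\infty))\subseteq C((-\infty,\infty);\Xs)$ of the present statement. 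Hence Theorem~\ref{123giveAC} applies verbatim to $\bar{\EE}$ and yields that $\bar{\EE}$ is an asymptotically compact evolutionary system.

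Next I would pass from $\bar{\EE}$ to $\EE_\Sig$. Because $\bar{\EE}([\tau,\infty))=\overline{\EE_\Sig([\tau,\infty))}^{C([\tau,\infty);\Xw)}$, we have $\EE_\Sig([0,\infty))\subseteq\bar{\EE}([0,\infty))$, and consequently $R_\Sig(t)X\subseteq\bar{R}(t)X$ for every $t\ge0$, where $\bar{R}$ denotes the reachability map $R$ associated to $\bar{\EE}$. Now fix any $t_k\to\infty$ and any $x_k\in R_\Sig(t_k)X$. Then $x_k\in\bar{R}(t_k)X$ for every $k$, so asymptotic compactness of $\bar{\EE}$ forces $\{x_k\}$ to be relatively strongly compact. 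By the definition of asymptotic compactness, this is precisely the assertion that $\EE_\Sig$ is asymptotically compact.

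I do not expect a serious obstacle here, since the argument is a direct concatenation of results already in hand. The one point that genuinely needs care is the \emph{direction} of the inclusion between $\EE_\Sig$ and its weak closure $\bar{\EE}$: asymptotic compactness is a statement about sequences drawn from the reachable sets $R(t_k)X$, and it transfers from a system with \emph{larger} reachable sets to one with \emph{smaller} reachable sets, never the reverse. It is therefore essential that $\EE_\Sig\subseteq\bar{\EE}$, so that $R_\Sig(t)X\subseteq\bar{R}(t)X$ and strong precompactness of sequences taken from $\bar{R}(t_k)X$ automatically covers those taken from $R_\Sig(t_k)X$. Getting this inclusion the right way around is the only place where one could slip.
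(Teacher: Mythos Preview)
The paper does not supply its own proof of this theorem; it is stated with a citation to \cite{CL12} and left unproved. Your argument is correct and is exactly the natural reduction using the machinery the paper has already assembled: Lemma~\ref{closureprops} lifts $\iref{C1}$--$\iref{C3}$ to $\iref{B1}$--$\iref{B3}$ for the closure system $\bar{\EE}$, Theorem~\ref{123giveAC} then yields asymptotic compactness of $\bar{\EE}$, and the inclusion $R_\Sig(t)X\subseteq\bar{R}(t)X$ (coming from $\EE_\Sig([0,\infty))\subseteq\bar{\EE}([0,\infty))$) passes relative strong compactness of sequences back to $\EE_\Sig$. Your closing remark about the direction of the inclusion is well taken and is indeed the only point requiring care.
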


Let $\EE$ be a nonautonomous evolutionary system with symbol space $\Sig$ and 
shift operators $T(s):\Sig\rightarrow\Sig$ for each $s\ge0$. Then, we have 
that $P_\sig(t,s)=R_\sig(t,s)$ for all $t\ge s$. We can use property (2) in 
Definition~\ref{nonautevolutionarysystem}, to obtain the following identity 
for any $\sig\in\Sig$, $t\ge r\in\R$, $s>0$ and $A\subseteq X$, 
\begin{equation}
\label{shiftproperty}
R_\sig(t+s,r+s)A=R_{T(s)\sig}(t,r)A.
\end{equation}
Using this fact, we have can say that 
\begin{equation*}
\WWd^\sig(A,t+s)=\WWd^{T(s)\sig}(A,t)
\end{equation*}
for any $\sig\in\Sig$, $t\in\R$, and $A\subseteq X$. Thus, we get that 
\begin{equation*}
\bigcup_{\sig\in\Sig}\bigcup_{t\in\R}\WWd^\sig(A,t)
    =\bigcup_{\sig\in\Sig}\WWd^\sig(A,t_0).
\end{equation*}
for any fixed $t_0\in\R$. Now, we have can draw the following relationship 
between $\cup_\sig\WWd^\sig(A,t_0)$ and the uniform attractor $\AAd^\Sig$ (if 
it exists).

\begin{thm}
\label{unifsections}
Let $\EE_\sig$ be a nonautonomous evolutionary system. Then, if the 
$\dd$-uniform attractor $\AAd^\Sig$ exists, we have that 
\begin{equation*}
\overline{\bigcup_{\sig\in\Sig}\WWd^\sig(X,t_0)}^\bullet
  \subseteq\AAd^\Sig
\end{equation*}
for any fixed $t_0\in\R$.
\end{thm}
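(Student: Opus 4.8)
The plan is to show that each $\WWd^\sig(X,t_0)$ sits inside the uniform attractor $\AAd^\Sig=\wwd^\Sig(X)$ by relating the pullback $\WW$-limit for a single symbol to the omega-limit for the augmented autonomous system $\EE_\Sig$. The natural approach is to work at the level of the set-maps: I would first recall that fixing $\sig$ turns $\EE_\sig$ into a generalized evolutionary system with $P_\sig(t,s)=R_\sig(t,s)$, and that by Theorem~\ref{unifattractor1} the existence of $\AAd^\Sig$ gives $\AAd^\Sig=\wwd^\Sig(X)$. So it suffices to prove the pointwise inclusion $\WWd^\sig(X,t_0)\subseteq\wwd^\Sig(X)$ for each $\sig$; once that is established, taking the union over $\sig$ and then the $\bullet$-closure preserves the inclusion, since $\AAd^\Sig$ is $\dd$-closed (being an attractor), and the $\bullet$-closure of a subset of a $\dd$-closed set remains inside it.

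The core of the argument is the pointwise inclusion, and I would prove it directly from the equivalent sequential characterizations of the two limit sets. Suppose $x\in\WWd^\sig(X,t_0)$. Then there are sequences $s_n\to-\infty$ with $s_n\le t_0$ and $x_n\in R_\sig(t_0,s_n)X$ with $x_n\xrightarrow{\dd}x$. The idea is to rewrite $R_\sig(t_0,s_n)X$ as a member of the autonomous family $R_\Sig(t)X=\bigcup_{\sig'\in\Sig}R_{\sig'}(t,0)X$ at the forward time $t=t_0-s_n\to+\infty$. Using the shift identity~(\ref{shiftproperty}), namely $R_\sig(t_0,s_n)X=R_\sig((t_0-s_n)+s_n,\,0+s_n)X=R_{T(t_0-s_n)\sig}(t_0-s_n,0)X$ (here I am applying the identity with $r=0$, $t=t_0-s_n$, and the shift amount $s=s_n$, legitimate once $s_n\ge 0$ is arranged by a time translation of the origin, or by applying the identity in the form that holds for the relevant sign). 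Hence $x_n\in R_{T(t_0-s_n)\sig}(t_0-s_n,0)X\subseteq R_\Sig(t_0-s_n)X$. Setting $t_n:=t_0-s_n\to+\infty$, we have $x_n\in R_\Sig(t_n)X$ with $x_n\xrightarrow{\dd}x$, which is exactly the sequential criterion for $x\in\wwd^\Sig(X)$.

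I expect the main obstacle to be the bookkeeping in the shift identity~(\ref{shiftproperty}): that identity is stated for $s>0$ and for $t\ge r\in\R$, whereas here the shift amount is $s_n$ (which is negative, tending to $-\infty$) and I want to push the base time to $0$. I would resolve this by absorbing the nonautonomous pullback into the symbol via the orbit $T(\cdot)\Sig=\Sig$: since the family of shifts maps $\Sig$ onto itself, the symbol $T(t_0-s_n)\sig$ (for $t_0-s_n>0$) is again a legitimate element of $\Sig$, so $R_{T(t_0-s_n)\sig}(t_0-s_n,0)X$ is genuinely one of the sets unioned in $R_\Sig(t_0-s_n)X$. The cleaner route, which I would adopt to avoid sign issues, is to invoke the relation already recorded in the text, $\WWd^\sig(A,t+s)=\WWd^{T(s)\sig}(A,t)$, together with $\bigcup_\sig\bigcup_t\WWd^\sig(A,t)=\bigcup_\sig\WWd^\sig(A,t_0)$; this reduces the whole union over symbols and times to a single fixed time and lets me compare termwise with $\wwd^\Sig(X)$ without chasing the shift parameter through inequalities. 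Once the membership $x\in\wwd^\Sig(X)=\AAd^\Sig$ is in hand for each generator of the union, closing up in the $\bullet$-topology inside the $\dd$-closed set $\AAd^\Sig$ completes the proof.
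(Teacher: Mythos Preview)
Your overall approach is correct and coincides with the paper's: show $\WWd^\sig(X,t_0)\subseteq\wwd^\Sig(X)=\AAd^\Sig$ for each $\sig$ via the sequential characterizations, then close up in $\dd$ using that $\AAd^\Sig$ is $\dd$-closed. The bookkeeping in the shift step, however, is not right. Applying~(\ref{shiftproperty}) with $t=t_0-s_n$, $r=0$, and shift $s=s_n$ would give $R_{\sig}(t_0,s_n)=R_{T(s_n)\sig}(t_0-s_n,0)$, not $R_{T(t_0-s_n)\sig}(t_0-s_n,0)$ as you wrote; and in any case $T(s_n)$ is undefined once $s_n<0$.

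The fix the paper uses is the one you gesture at but do not actually carry out: first pass to a subsequence so that $s_n\le0$, then invoke the surjectivity $T(-s_n)\Sig=\Sig$ (valid since $-s_n\ge0$) to choose $\sig_n'\in\Sig$ with $T(-s_n)\sig_n'=\sig$. Reading~(\ref{shiftproperty}) from right to left with symbol $\sig_n'$ and shift $-s_n$ then gives
\[
R_\sig(t_0,s_n)=R_{T(-s_n)\sig_n'}(t_0,s_n)=R_{\sig_n'}(t_0-s_n,0)\subseteq R_\Sig(t_0-s_n)X,
\]
so $x_n\in R_\Sig(t_n)X$ with $t_n:=t_0-s_n\to\infty$ and hence $x\in\wwd^\Sig(X)$. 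Note that the new symbol is a \emph{preimage} of $\sig$ under $T(-s_n)$, not the forward translate $T(t_0-s_n)\sig$. Your ``cleaner route'' via the identity $\WWd^\sig(A,t+s)=\WWd^{T(s)\sig}(A,t)$ does not bypass this issue, since that identity is itself derived from~(\ref{shiftproperty}) with $s\ge0$ and merely relocates the same sign obstruction.
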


\begin{rmk}
\label{sequenceversions2}
Note that $x\in\wwd^\Sig(A)$ if and only if there exist sequences 
$\sig_n\in\Sig$, $t_n\ge0$ with $t_n\rightarrow\infty$, and $x_n\in 
R_{\sig_n}(t_n,0)A$ with $x_n\xrightarrow{\dd}x$. Similarly, if $x\in
\overline{\cup_\sig\WWd^\sig(A,t_0)}^\bullet$ then there exist sequences 
$\sig_n\in\Sig$, $s_n\in\R$, $s_n\le t_0$ with $s_n\rightarrow-\infty$, and 
$x_n\in R_{\sig_n}(t_0,s_n)A$ so that $x_n\xrightarrow{\dd}x$. 
\end{rmk}

\begin{proof}
Let $x\in\cup_\sig\WWd^\sig(X,t_0)$. Then, there exist sequences 
$\sig_n\in\Sig$, $s_n\rightarrow-\infty$ with $s_n\le t_0$ and $x_n\in 
R_{\sig_n}(t_0,s_n)X$ with $x_n\xrightarrow{\dd}x$. Without loss of generality, 
we can pass to a subsequence and assume that $s_n\le 0$ for each $n$. Using the 
fact that $R_{\sig_n}(t_0,s_n)X=R_{\sig_n'}(t-s_n,0)X$ for any $\sig_n'$ so 
that $T(-s_n)\sig_n'=\sig_n$, we see that $x\in\wwd^\Sig(X)$ by 
Remark~\ref{sequenceversions2}. Thus, 
\begin{equation*}
\overline{\bigcup_{\sig\in\Sig}\WWd^\sig(X,t_0)}^\bullet
     \subseteq\wwd^\Sig(X)=\AAd^\Sig
\end{equation*}
by Theorem~\ref{unifattractor1}.
\end{proof}

Using this result, Theorem~\ref{unifattractor2}, Theorem~\ref{existsiff}, and 
Theorem~\ref{weakattractorexists}, we get the following corollary.

\begin{cor}
\label{generalweakunifpullback}
Let $\EE_\sig$ be a nonautonomous evolutionary system. Then, the weak uniform 
attractor $\AAw^\Sig$ exists. Similarly, for each $\sig\in\Sig$, the induced 
generalized evolutionary system, there exists a weak pullback attractor 
$\AAw^\sig(t)$. Moreover, 
\begin{equation*}
\overline{\bigcup_{\sig\in\Sig}\AAw^\sig(t_0)}^w \subseteq\AAw^\Sig
\end{equation*}
for any fixed $t_0\in\R$.
\end{cor}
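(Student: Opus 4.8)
The plan is to chain together results already established in the excerpt, so that the corollary reduces to a substitution. First I would dispose of the two existence claims. The weak uniform attractor $\AAw^\Sig$ exists unconditionally for any nonautonomous evolutionary system by Theorem~\ref{unifattractor1}. For the pullback attractors, I would invoke the observation made immediately after Definition~\ref{nonautevolutionarysystem}: fixing any symbol $\sig\in\Sig$ turns $\EE_\sig$ into a generalized evolutionary system, so Theorem~\ref{weakattractorexists} guarantees that the weak pullback attractor $\AAw^\sig(t)$ exists for every $\sig\in\Sig$ and every $t\in\R$.

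The key to the inclusion is to rewrite both sides in terms of pullback $\WW$-limits. By Theorem~\ref{exists} (the same identification appears inside the proof of Theorem~\ref{weakattractorexists}), the weak pullback attractor of the generalized evolutionary system induced by $\sig$ is given by
\begin{equation*}
\AAw^\sig(t_0)=\WWw^\sig(X,t_0)
\end{equation*}
for each $\sig\in\Sig$. Now I would apply Theorem~\ref{unifsections} in the case $\bullet=w$: since the weak uniform attractor $\AAw^\Sig$ exists, that theorem yields
\begin{equation*}
\overline{\bigcup_{\sig\in\Sig}\WWw^\sig(X,t_0)}^w\subseteq\AAw^\Sig
\end{equation*}
for any fixed $t_0\in\R$. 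Substituting $\WWw^\sig(X,t_0)=\AAw^\sig(t_0)$ into the union on the left produces exactly the asserted inclusion, and the proof is complete.

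The only point that requires genuine verification --- and hence the one I would treat as the main obstacle --- is the claim that fixing a symbol produces a bona fide generalized evolutionary system in the sense of Definition~\ref{GES}. Concretely, one must check that properties (1), (3), and (4) of the nonautonomous system (Definition~\ref{nonautevolutionarysystem}), read with $\sig$ held fixed, deliver properties (1)--(3) of Definition~\ref{GES}; the shift identity, property (2) of the nonautonomous system, is precisely the structure that is discarded in passing to $\EE_\sig$, and no claim of the corollary relies on it. Once this is granted, everything else is bookkeeping: the corollary is simply the conjunction of Theorems~\ref{unifattractor1}, \ref{weakattractorexists}, \ref{exists}, and \ref{unifsections}, with no new estimates required.
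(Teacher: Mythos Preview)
Your proposal is correct and mirrors the paper's own derivation: the corollary is stated without a standalone proof and is introduced as a consequence of Theorem~\ref{unifsections} together with the existence results Theorem~\ref{unifattractor2} (which in turn defers existence of $\AAw^\Sig$ to Theorem~\ref{unifattractor1}), Corollary~\ref{existsiff}, and Theorem~\ref{weakattractorexists}. Your chain of citations and the substitution $\AAw^\sig(t_0)=\WWw^\sig(X,t_0)$ are exactly the intended argument, and your remark about verifying that a fixed symbol yields a generalized evolutionary system is the same observation the paper records immediately after Definition~\ref{nonautevolutionarysystem}.
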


Combining Theorem~\ref{unifsections} with Theorem~\ref{123NES}, the second half 
of Theorem~\ref{unifattractor2}, Theorem~\ref{strongexists}, and the following 
Lemma~\ref{NACimpliesPAC}, we get get a similar embedding of the union of the 
strong pullback attractors within the strong uniform attractor. But first, we need to 
know that the asymptotic compactness of $\EE_\Sig$ guarantees the pullback 
asymptotic compactness of each $\EE_\sig$. This is given in the following lemma.

\begin{lem}
\label{NACimpliesPAC}
Let $\EE_\Sig$, the induced autonomous evolutionary system from the 
nonautonomous evolutionary system $\EE_\sig$ be asymptotically compact. Then, 
for each fixed $\sig\in\Sig$, the induced generalized evolutionary system 
$\EE_\sig$ is pullback asymptotically compact.
\end{lem}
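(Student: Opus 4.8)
The plan is to translate the pullback asymptotic compactness of a fixed fiber $\EE_\sig$ (initial time running to $-\infty$, final time held fixed) into the forward asymptotic compactness that we are \emph{given} for the induced autonomous system $\EE_\Sig$ (elapsed time running to $+\infty$). The bridge between the two is the translation identity relating the fibers $R_\sig(t,s)$ to the autonomous map $R_\Sig$, together with the elementary observation that the \emph{elapsed} time $t-s_n$ is what diverges.

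First I would fix $\sig\in\Sig$ and simply unwind the definition of pullback asymptotic compactness: take $t\in\R$, a sequence $s_n\to-\infty$ with $s_n\le t$, and points $x_n\in P_\sig(t,s_n)X$. Since for a fixed symbol we have $P_\sig(t,s)=R_\sig(t,s)$, this reads $x_n\in R_\sig(t,s_n)X$, and the goal is to show that $\{x_n\}$ is relatively strongly compact.

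The key step is to observe that for each $n$, since $s_n\le t$ the elapsed time $t-s_n\ge0$, so the union--shift identity $\bigcup_{\sig'\in\Sig}R_{\sig'}(\hat t,0)X=\bigcup_{\sig'\in\Sig}R_{\sig'}(\hat t+\tau,\tau)X$, applied with $\hat t=t-s_n$ and $\tau=s_n$, yields
\[
R_\sig(t,s_n)X\subseteq\bigcup_{\sig'\in\Sig}R_{\sig'}(t,s_n)X=\bigcup_{\sig'\in\Sig}R_{\sig'}(t-s_n,0)X=R_\Sig(t-s_n)X .
\]
In particular $x_n\in R_\Sig(t-s_n)X$. Concretely, because $T(-s_n)\Sig=\Sig$ one may select $\tilde\sig_n\in\Sig$ with $T(-s_n)\tilde\sig_n=\sig$, and then applying the shift property (\ref{shiftproperty}) with shift $s=-s_n>0$ gives $R_\sig(t,s_n)X=R_{\tilde\sig_n}(t-s_n,0)X$; this is the single-fiber version of the identity above.

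Finally I would set $\tau_n:=t-s_n$ and note that $s_n\to-\infty$ forces $\tau_n\to+\infty$, so that $x_n\in R_\Sig(\tau_n)X$ with $\tau_n\to\infty$. By the assumed asymptotic compactness of $\EE_\Sig$, the sequence $\{x_n\}$ is relatively strongly compact, which is exactly the pullback asymptotic compactness of $\EE_\sig$. I do not anticipate a genuine obstacle here; the only point requiring care is the bookkeeping in the translation identity---in particular that the symbol shift $T(-s_n)$ is admissible (it rests on the surjectivity $T(s)\Sig=\Sig$) and that it is the elapsed time $t-s_n$, rather than the final time $t$, that must be sent to infinity in order to invoke the forward asymptotic compactness.
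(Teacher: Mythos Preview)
Your proof is correct and follows essentially the same route as the paper's: both arguments embed $R_\sig(t,s_n)X$ into $R_\Sig(t-s_n)X$ via the shift/union identity and then invoke the forward asymptotic compactness of $\EE_\Sig$ with the elapsed times $t-s_n\to\infty$. Your version is slightly more explicit about the bookkeeping (the surjectivity $T(s)\Sig=\Sig$ needed to pick $\tilde\sig_n$, and the fact that $-s_n>0$ eventually), but the substance is identical.
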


\begin{proof}
Let $\sig\in\Sig$ be fixed. Let $s_n\le t$ be so that $s_n\rightarrow-\infty$. 
Also, let $x_n\in R_\sig(t,s_n)X$. Then, using (\ref{shiftproperty}), we get 
that 
\begin{equation*}
x_n\in\bigcup_{\sig\in\Sig}R_\sig(t,s_n)X=\bigcup_{\sig\in\Sig}R_\sig(t-s_n,0)X
            =R_\Sig(t-s_n)X
\end{equation*}
for $t-s_n\rightarrow\infty$. Thus, $x_n$ has a convergent subsequence by the 
asymptotic compactness of $\EE_\Sig$
\end{proof}

\begin{thm}
\label{unifsections2}
Let $\EE_\Sig$, the induced autonomous evolutionary system from the 
nonautonomous evolutionary system $\EE_\sig$, be asymptotically compact or let 
$\EE_\Sig$ satisfy $\iref{C1}$, $\iref{C2}$, and $\iref{C3}$ with complete 
trajectories strongly continuous. Then, the weak uniform attractor $\AAw^\Sig$ 
is the strongly compact strong uniform attractor $\AAs^\Sig$. Also, for each 
fixed $\sig\in\Sig$, the weak pullback attractor $\AAw^\sig(t)$ is a strongly 
compact strong pullback attractor $\AAs^\sig(t)$. Moreover, 
\begin{equation*}
\overline{\bigcup_{\sig\in\Sig}\AAw^\sig(t_0)}^w
  =\overline{\bigcup_{\sig\in\Sig}\AAs^\sig(t_0)}^s
  \subseteq\AAs^\Sig=\AAw^\Sig
\end{equation*}
for any fixed $t_0\in\R$.
\end{thm}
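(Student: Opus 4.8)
The plan is to reduce the entire statement to results already proved, observing first that both hypotheses force $\EE_\Sig$ to be asymptotically compact. In the first alternative this is assumed outright; in the second, since $\EE_\Sig$ satisfies \iref{C1}, \iref{C2}, \iref{C3} and its complete trajectories are strongly continuous (i.e.\ $\bar{\EE}((-\infty,\infty))\subseteq C((-\infty,\infty);\Xs)$), Theorem~\ref{123NES} delivers the asymptotic compactness of $\EE_\Sig$. So from this point on I would work under the single assumption that $\EE_\Sig$ is asymptotically compact.

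With that in place, the two existence assertions are immediate citations. Applying the asymptotically compact part of Theorem~\ref{unifattractor2} shows that the strong uniform attractor $\AAs^\Sig$ exists, is strongly compact, and equals $\AAw^\Sig$; this is already the rightmost equality of the display. For the fibers, Lemma~\ref{NACimpliesPAC} upgrades asymptotic compactness of $\EE_\Sig$ to pullback asymptotic compactness of each fixed-symbol system $\EE_\sig$, and since each such $\EE_\sig$ is a generalized evolutionary system, Theorem~\ref{strongexists} produces for every $\sig\in\Sig$ and every $t$ a strongly compact strong pullback attractor $\AAs^\sig(t)$ with $\AAs^\sig(t)=\AAw^\sig(t)$.

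The crux is then the first equality in the display. Because $\AAw^\sig(t_0)=\AAs^\sig(t_0)$ for each $\sig$, the two unions are literally the same set $U:=\bigcup_{\sig\in\Sig}\AAw^\sig(t_0)=\bigcup_{\sig\in\Sig}\AAs^\sig(t_0)$, so the claim collapses to $\overline{U}^w=\overline{U}^s$. The inclusion $\overline{U}^s\subseteq\overline{U}^w$ is automatic, since $\overline{U}^w$ is weakly closed, hence strongly closed, and contains $U$. For the reverse inclusion I would invoke Corollary~\ref{generalweakunifpullback} together with the identity $\AAw^\Sig=\AAs^\Sig$ just obtained to get $\overline{U}^w\subseteq\AAw^\Sig=\AAs^\Sig$, which is strongly compact; as a strongly closed subset of a strongly compact set, $\overline{U}^w$ is itself strongly compact. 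Then for $x\in\overline{U}^w$ I would take $u_n\in U$ with $u_n\xrightarrow{\dw}x$, extract a strongly convergent subsequence $u_{n_k}\xrightarrow{\ds}y\in\AAs^\Sig$ by strong compactness, note that strong convergence gives $u_{n_k}\xrightarrow{\dw}y$, and conclude $y=x$ by uniqueness of weak limits, so $u_{n_k}\xrightarrow{\ds}x$ and $x\in\overline{U}^s$. This yields $\overline{U}^w=\overline{U}^s$ and simultaneously the inclusion $\overline{U}^s\subseteq\AAs^\Sig$, closing the whole chain.

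I expect this final paragraph to be the only genuine obstacle: the existence statements are mechanical applications of named theorems, whereas the coincidence of the weak and strong closures of $U$ is the one substantive point. The key is that $\overline{U}^w$ is confined \emph{a priori} inside the strongly compact uniform attractor, which is exactly what legitimizes the subsequence extraction and forces the two closures to agree; without that confinement, supplied here by Corollary~\ref{generalweakunifpullback}, the argument would not go through.
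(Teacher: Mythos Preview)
Your proposal is correct and follows essentially the same route the paper indicates: the paper does not write out a proof for this theorem but introduces it as a combination of Theorem~\ref{unifsections}, Theorem~\ref{123NES}, the second half of Theorem~\ref{unifattractor2}, Theorem~\ref{strongexists}, and Lemma~\ref{NACimpliesPAC}, which is exactly the chain of citations you assemble. Your explicit argument for the equality $\overline{U}^w=\overline{U}^s$ via confinement inside the strongly compact set $\AAs^\Sig$ supplies the one detail the paper leaves to the reader.
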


Unfortunately, the reverse inclusion is not known in full generality at this 
time. However, with the inclusion of some fairly weak assumptions, we can prove 
the reverse inclusion. To this end, let $\EE_\sig$ be a nonautonomous 
evolutionary system. Assume that $\EE_\Sig$, the induced evolutionary system 
from $\EE_\sig$ satisfies $\iref{C1}$. Assume that for a fixed $\sig\in\Sig$ 
that $\EE_\sig([s,\infty))$ is closed in $C([s,\infty);\Xw)$ for each $s\in\R$. 
Then, $\EE_\sig$ satisfies $\iref{A1}$. Let us denote by $\AAd^\sig(t)$ the 
$\dd$-pullback attractor for the generalized evolutionary system $\EE_\sig$ 
(if it exists). Using Theorem~\ref{unifattractor2} and Theorem~\ref{weak=i}, 
we get the following.

\begin{thm}
\label{unif=pullbacksections}
Let $\EE_\sig$ be a nonautonomous evolutionary system. Let $\EE_\Sig$, the 
induced evolutionary system satisfy $\iref{C1}$. Assume that for any $\sig\in
\Sig$ and any $s\in\R$ that $\EE_\sig([s,\infty))$ is closed in 
$C([s,\infty);\Xw)$. Then the weak uniform attractor $\AAw^\Sig$ is given by 
\begin{equation*}
\AAw^\Sig=\overline{\bigcup_{\sig\in\Sig}\AAw^\sig(0)}^w
\end{equation*}
where $\AAw^\sig(t)$ is the weak pullback attractor for the generalized 
evolutionary system $\EE_\sig$. Moreover, the weak uniform tracking property 
holds.
\end{thm}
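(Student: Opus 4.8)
The plan is to prove the two inclusions separately and to extract the nontrivial one from the minimality of the weak uniform attractor. First recall that, since each $\EE_\sig$ satisfies \iref{A1}, Theorem~\ref{weakattractorexists} gives $\AAw^\sig(0)=\WWw^\sig(X,0)$, so the set on the right is $\mathcal U:=\overline{\bigcup_{\sig\in\Sig}\WWw^\sig(X,0)}^w$. The inclusion $\mathcal U\subseteq\AAw^\Sig$ is already in hand: it is exactly Corollary~\ref{generalweakunifpullback} with $t_0=0$ (which itself rests on Theorem~\ref{unifsections}). Thus the real content of the theorem is the reverse inclusion $\AAw^\Sig\subseteq\mathcal U$, after which the closing sentence about the weak uniform tracking property is immediate from Theorem~\ref{unifattractor2}(3).

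For the reverse inclusion I would use that $\AAw^\Sig=\www^\Sig(X)$ is, by Theorem~\ref{unifattractor1}, the \emph{minimal} weakly closed, weakly attracting set for the induced autonomous system $\EE_\Sig$. Since $\mathcal U$ is weakly closed by construction, it then suffices to show that $\mathcal U$ weakly attracts $X$ under $R_\Sig$; minimality of $\AAw^\Sig$ would force $\AAw^\Sig\subseteq\mathcal U$, and together with the easy inclusion this yields equality. So the heart of the matter is the attraction claim: for every $\ee>0$ there is $t_*$ with $R_\Sig(t)X=\bigcup_{\sig\in\Sig}R_\sig(t,0)X\subseteq B_\mathrm{w}(\mathcal U,\ee)$ for all $t\ge t_*$.

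I would argue this by contradiction, producing $t_k\to\infty$, symbols $\sig_k$, and points $z_k\in R_{\sig_k}(t_k,0)X$ with $\dw(z_k,\mathcal U)\ge\ee$. Using the shift identity \eqref{shiftproperty} together with $T(t_k)\Sig=\Sig$, rewrite $z_k=u_k(0)$ with $u_k\in\EE_{\rho_k}([-t_k,\infty))$ and $u_k(-t_k)\in X$, where $\rho_k=T(t_k)\sig_k$ and $-t_k\to-\infty$. By the uniform precompactness \iref{C1} of $\EE_\Sig([0,\infty))$ (which propagates to each window $[-K,\infty)$) and the usual diagonalization, pass to a subsequence so that $u_k\to u$ in $C((-\infty,\infty);\Xw)$ for some $u\in\bar\EE((-\infty,\infty))$; in particular $z_k\xrightarrow{\dw}u(0)$, and since $y\mapsto\dw(y,\mathcal U)$ is weakly continuous, $\dw(u(0),\mathcal U)\ge\ee$.

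The crux is then to locate $u(0)$ inside $\mathcal U$, contradicting the previous line, and this is exactly where the hypothesis that each $\EE_\sig([s,\infty))$ is weakly closed enters: if along the subsequence the limit can be realized within a single symbol $\rho$, then restricting $u_k$ to any window $[-K,\infty)$ and passing to the limit keeps us inside the closed set $\EE_\rho([-K,\infty))$, so $u\in\EE_\rho((-\infty,\infty))$ and hence $u(0)\in\WWw^\rho(X,0)\subseteq\mathcal U$ by Theorem~\ref{weak=i}. I expect the main obstacle to be precisely arranging this single symbol: the $\rho_k$ vary and $\Sig$ carries no compactness, so a priori the limit $u$ only lies in the larger system $\bar\EE$. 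This is where \iref{C1} must be used in tandem with the shift-invariance $T(s)\Sig=\Sig$ and the induced relation $\WWw^\sig(X,s)=\WWw^{T(s)\sig}(X,0)$ to reduce to, or extract, a constant symbol along a suitable subsequence; equivalently, one must upgrade the weak uniform tracking of Theorem~\ref{unifattractor2}(3), whose tracker lives in $\bar\EE((-\infty,\infty))$, to a tracker living in $\bigcup_{\sig\in\Sig}\EE_\sig((-\infty,\infty))$. Once this constant-symbol reduction is secured, the closedness assumption closes the argument as above.
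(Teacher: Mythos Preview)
Your proposal correctly isolates the easy inclusion via Corollary~\ref{generalweakunifpullback} and correctly identifies that the heart of the matter is placing $u(0)$ inside $\mathcal U$ for $u\in\bar\EE((-\infty,\infty))$. The genuine gap is your proposed resolution: the ``constant-symbol reduction'' you aim for cannot be carried out. There is no compactness hypothesis on $\Sig$, the shift relation $T(s)\Sig=\Sig$ gives you nothing about convergence of symbols, and the closedness of each $\EE_\sig([s,\infty))$ only tells you that \emph{if} the $\rho_k$ were eventually constant then the limit would stay in that $\EE_\rho$ --- it does not manufacture such a subsequence. Your own diagnosis that this is ``the main obstacle'' is accurate, but you leave it unresolved, and it is in fact unresolvable along these lines.

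The paper avoids this obstacle entirely by \emph{not} trying to show that $\mathcal U$ is attracting. Instead it invokes the two structure theorems directly: Theorem~\ref{unifattractor2}(1) gives $\AAw^\Sig=\{u(0):u\in\bar\EE((-\infty,\infty))\}$, and Theorem~\ref{weak=i} gives $\AAw^\sig(0)=\{u(0):u\in\EE_\sig((-\infty,\infty))\}$ for each $\sig$ (this is where the closedness assumption is actually used --- together with \iref{C1} it yields \iref{A1} for each $\EE_\sig$). The problem then reduces to showing
\[
\{u(0):u\in\bar\EE((-\infty,\infty))\}
  =\overline{\bigcup_{\sig\in\Sig}\{u(0):u\in\EE_\sig((-\infty,\infty))\}}^{\,w}.
\]
For the nontrivial inclusion $\subseteq$, given $u\in\bar\EE((-\infty,\infty))$ one approximates $u$ by complete trajectories $u_n\in\EE_{\sig_n}((-\infty,\infty))$ with \emph{varying} $\sig_n$; then $u_n(0)\xrightarrow{\dw}u(0)$ and each $u_n(0)\in\II_{\sig_n}(0)$, so $u(0)$ lies in the weak closure. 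The point is that $\mathcal U$ is already a closure over all symbols, so a single symbol is never needed. Your detour through attraction and minimality ends at the same crux but frames it in a way that makes the unnecessary single-symbol reduction look mandatory.
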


\begin{proof}
For the closure of $\EE_\Sig$, $\bar{\EE}$, we have using 
Theorem~\ref{unifattractor2} that the weak uniform attractor $\AAw^\Sig$ 
is given by 
\begin{equation*}
\AAw^\Sig = \{u(0):u\in\bar{\EE}((-\infty,\infty))\}.
\end{equation*}
Also, by Theorem~\ref{weak=i}, we have that for each $\sig\in\Sig$, 
\begin{equation*}
\AAw^\sig(t)=\II_\sig(t):=\{u(t):u\in\EE_\sig((-\infty,\infty))\}.
\end{equation*}
Thus, if we can show that 
\begin{equation*}
\{u(0):u\in\bar{\EE}((-\infty,\infty))\}
  = \overline{\bigcup_{\sig\in\Sig}\{u(0):u\in\EE_\sig((-\infty,\infty))\}}^w,
\end{equation*}
we are done. First, let $u\in\bar{\EE}((-\infty,\infty))$. Then, there are 
$u_n\in \EE_{\sig_n}((-\infty,\infty))$ for some $\sig_n\in\Sig$ with 
$u_n\rightarrow u$ in the sense of $C((-\infty,\infty);\Xw)$. Then, $u_n(0)
\xrightarrow{\dw}u(0)$. Therefore, 
\begin{equation*}
\{u(0):u\in\bar{\EE}((-\infty,\infty))\}\subseteq
\overline{\bigcup_{\sig\in\Sig}\{u(0):u\in\EE_\sig((-\infty,\infty))\}}^w.
\end{equation*}
For the other inclusion, let $u_n\in\EE_{\sig_n}((-\infty,\infty))$ be so that 
$u_n(0)\xrightarrow{\dw}x$ for some $x\in X$. Since $\EE_\Sig$ satisfies 
$\iref{C1}$, there is a subsequence which we reindex as $u_n$ converging in 
the sense of $C([0,\infty);\Xw)$ to some $u^0\in\bar{\EE}([0,\infty))$. In 
particular, $u_n(0)\xrightarrow{\dw}x$. Hence, $u^0(0)=x$. Again, passing to 
another subsequence, we can find a subsequence and drop a subindex to obtain 
that $u_n\rightarrow u^1\in\bar{\EE}([-1,\infty))$ in $C([-1,\infty);\Xw)$. 
Note that then $u^1|_{[0,\infty)}=u^0$. By the usual diagonalization argument, 
we find a subsequence $u_n\rightarrow u$ for some $u\in\bar{\EE}((-\infty,
\infty))$ in $C((-\infty,\infty);\Xw)$ with $u(0)=x$. Therefore, 
\begin{equation*}
\{u(0):u\in\bar{\EE}((-\infty,\infty))\}\supseteq
  \overline{\bigcup_{\sig\in\Sig}\{u(0):u\in\EE_\sig((-\infty,\infty))\}}^w.
\end{equation*}
\end{proof}

Finally, we combine this with Lemma~\ref{NACimpliesPAC}, Theorem~\ref{123NES}, 
Theorem~\ref{unifattractor2}, and Theorem~\ref{strongexists} to get the 
following Theorem.

\begin{thm}
\label{strongsections}
Let $\EE_\sig$ be a nonautonomous evolutionary system. Let $\EE_\Sig$, the 
induced evolutionary system satisfy $\iref{C1}$. Assume that, for any $\sig\in
\Sig$ and any $s\in\R$ that $\EE_\sig([s,\infty))$ is closed in $C([s,\infty;
\Xw)$. Moreover, assume that $\EE_\Sig$ is asymptotically compact or that 
$\EE_\Sig$ satisfies $\iref{C2}$ and $\iref{C3}$ with complete trajectories 
strongly continuous. Then, we have that 
\begin{equation*}
\AAs^\Sig=\AAw^\Sig=\overline{\bigcup_{\sig\in\Sig}\AAw^\sig(0)}^w
                   =\overline{\bigcup_{\sig\in\Sig}\AAs^\sig(0)}^w.
\end{equation*}
\end{thm}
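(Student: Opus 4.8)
The plan is to read off the four-term chain as a sequence of equalities, each furnished by a result proved earlier; the only genuine work is to funnel both branches of the hypothesis into the single statement that $\EE_\Sig$ is asymptotically compact, and then to keep careful track of which hypothesis feeds which cited theorem. I would first dispose of the disjunction in the hypothesis. In the first branch $\EE_\Sig$ is asymptotically compact by assumption. In the second branch $\EE_\Sig$ satisfies $\iref{C1}$ (the standing hypothesis) together with $\iref{C2}$, $\iref{C3}$, and the strong continuity of complete trajectories, i.e. $\bar{\EE}((-\infty,\infty))\subseteq C((-\infty,\infty);\Xs)$; this is precisely the hypothesis of Theorem~\ref{123NES}, so $\EE_\Sig$ is again asymptotically compact. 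Thus in either case I may assume $\EE_\Sig$ is asymptotically compact and satisfies $\iref{C1}$.

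With that reduction in hand, the leftmost equality $\AAs^\Sig=\AAw^\Sig$ follows immediately from the asymptotically-compact part of Theorem~\ref{unifattractor2}, which also records that $\AAs^\Sig$ is strongly compact. For the middle equality I would invoke Theorem~\ref{unif=pullbacksections}: its hypotheses are exactly $\iref{C1}$ for $\EE_\Sig$ together with the closedness of each $\EE_\sig([s,\infty))$ in $C([s,\infty);\Xw)$, both of which are assumed here. The one bookkeeping point to verify is that this closedness, combined with $\iref{C1}$ (precompactness of $\EE_\Sig([0,\infty))$, of which each $\EE_\sig([0,\infty))$ is a subset), forces each $\EE_\sig$ to satisfy $\iref{A1}$; this is the observation recorded just before Theorem~\ref{unif=pullbacksections}, and it is what makes that theorem applicable. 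It yields $\AAw^\Sig=\overline{\bigcup_{\sig\in\Sig}\AAw^\sig(0)}^w$.

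It remains to identify the weak and strong pullback sections fibrewise. By Lemma~\ref{NACimpliesPAC}, the asymptotic compactness of $\EE_\Sig$ transfers to pullback asymptotic compactness of each induced generalized evolutionary system $\EE_\sig$. Feeding this into Theorem~\ref{strongexists} shows that for every $\sig$ the weak pullback attractor $\AAw^\sig(t)$ is in fact a strongly compact strong pullback attractor, so $\AAs^\sig(0)=\AAw^\sig(0)$ for each $\sig$. Taking unions over $\sig\in\Sig$ and then weak closures gives the rightmost equality $\overline{\bigcup_{\sig\in\Sig}\AAw^\sig(0)}^w=\overline{\bigcup_{\sig\in\Sig}\AAs^\sig(0)}^w$, and the chain is complete.

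I expect the main obstacle to be one of hypothesis-matching rather than of new estimates: I must interpret ``complete trajectories strongly continuous'' as the inclusion $\bar{\EE}((-\infty,\infty))\subseteq C((-\infty,\infty);\Xs)$ demanded by Theorem~\ref{123NES}, and I must confirm that Lemma~\ref{NACimpliesPAC} and Theorem~\ref{strongexists} apply to each fibre $\EE_\sig$ uniformly, so that the fibrewise identity $\AAs^\sig(0)=\AAw^\sig(0)$ holds simultaneously for all $\sig$ before the union and closure are taken. Everything else is a direct application of the quoted results.
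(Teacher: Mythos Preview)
Your proposal is correct and matches the paper's own argument, which simply states that the theorem follows by combining Theorem~\ref{unif=pullbacksections} with Lemma~\ref{NACimpliesPAC}, Theorem~\ref{123NES}, Theorem~\ref{unifattractor2}, and Theorem~\ref{strongexists}. You have invoked precisely these results in the right places and correctly handled the disjunction in the hypothesis via Theorem~\ref{123NES}.
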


\section{Application to the 3D Navier-Stokes Equations}
\label{3dnses}
The 3D space-periodic, incompressible Navier-Stokes Equations (NSEs) on the 
periodic domain $\Omega:=\mathbb{T}^3$, the three-dimensional torus, are 
given as follows:
\begin{equation}
\label{NSE}
\left\{\begin{array}{l}\derivt u-\nu\Delta u+(u\cdot\nabla)u+\nabla 
            p = f(t) \\ 
       \nabla\cdot u = 0\end{array}\right.
\end{equation}
where $u$, the velocity and $p$, the pressure, are unknowns; $f(t)$ is a 
time-dependent forcing term; and $\nu>0$ is the kinematic viscosity 
coefficient of the fluid. Assuming the initial condition, $u_s:=u(s)$, and the 
forcing term, $f(t)$ for each $t$, have the property that 
\begin{equation*}
\int_\Omega u_s(x)\ddx=\int_\Omega f(x,t)\ddx=0,
\end{equation*}
we get that 
\begin{equation*}
\int_\Omega u(x,t)\ddx=0
\end{equation*}
for all $t\ge s\in\R$. We are now ready to set up the functional setting.

Denote by $\ip{\cdot}{\cdot}$ and $\abs{\cdot}$ the $L^2(\Omega)^3$-inner 
product and the $L^2(\Omega)^3$-norm, respectively. Let $\mathscr{V}$ be 
given by
\begin{equation*}
\mathscr{V}:=\left\{u\in [C^\infty(\Omega)]^3:\int_\Omega u(x)\ddx
                       =0,~\nabla\cdot u=0\right\}.
\end{equation*}
Next, let $H$ and $V$ be the closures of $\mathscr{V}$ in $L^2(\Omega)^3$ 
and $H^1(\Omega)^3$, respectively. Denote by $H_w$ the set $H$ endowed with 
the weak topology.

Let $P_\sigma:L^2(\Omega)^3\rightarrow H$ be the $L^2$ orthogonal projection, 
known as the Leray projector. Let $A:=-P_\sigma\Delta=-\Delta$ be the Stokes 
operator with domain $(H^2(\Omega))^3\cap V$. Note that the Stokes operator is 
a self-adjoint, positive operator with compact inverse. Let 
\begin{equation*}
\norm{u}:=\abs{A^{1/2}u}.
\end{equation*}
Note that $\norm{u}$ is equivalent to the $H^1$ norm of $u$ for $u\in 
D(A^{1/2})$ by the Poincar\'{e} inequality. Let $\IP{\cdot}{\cdot}$ denote the 
corresponding inner product in $H^1$.

Next, denote by $B(u,v):=P_\sigma(u\cdot\nabla v)\in V'$ for each $u,v\in V$. 
This is a bilinear form with the following property:
\begin{equation*}
\angip{B(u,v)}{w}=-\angip{B(u,w)}{v}
\end{equation*}
for each $u,v,w\in V$.

We can now rewrite (\ref{NSE}) as a differential equation in $V'$. That is, 
\begin{equation}
\label{NSEweak}
\derivt u+\nu Au+B(u,u)=g
\end{equation}
for $g:=P_\sigma f$, and $u$ is a $V$-valued function of time.

\begin{define}
\label{weaksoln}
The function $u:[T,\infty)\rightarrow H$ (or $u:(-\infty,\infty)\rightarrow 
H$) is a weak solution to (\ref{NSE}) on $[T,\infty)$ (or $(-\infty,\infty)$) 
if
\begin{enumerate}
\item $\derivt u\in L_{loc}^1([T,\infty);V')$.
\item $u\in C([T,\infty);H_w)\cap L^2_{loc}([T,\infty);V)$.
\item $\ip{\derivt u(t)}{\phi}+\nu\IP{u(t)}{\phi}+\angip{B(u(t),u(t))}{\phi}
           =\angip{g(t)}{\phi}$ for a.e. $t\in[T,\infty)$ and each $\phi\in V$.
\end{enumerate}
\end{define}

\begin{thm}[Leray, Hopf]
\label{LHsolns}
For each $u_0\in H$ and $g\in L^2_{loc}(\R;V')$, there exists a weak solution 
of (\ref{NSE}) on $[T,\infty)$ with $u(T)=u_0$, and for each $t\ge t_0$, 
$t_0$ a.e. in $[T,\infty)$ we have the following energy inequality:
\begin{equation}
\label{LHenergyineq}
\abs{u(t)}^2+2\nu\int_{t_0}^t\norm{u(s)}^2\dds\le\abs{u(t_0)}^2+2\int_{t_0}^t
   \angip{g(s)}{u(s)}\dds.
\end{equation}
\end{thm}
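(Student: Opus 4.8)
The plan is to construct a solution by the Faedo--Galerkin method and to recover the energy inequality \eqref{LHenergyineq} by passing to the limit with the help of weak lower semicontinuity of the $H$- and $V$-norms. First I would work on a bounded interval $[T,T']$ and take $\{w_k\}$ to be the $L^2$-orthonormal basis of eigenfunctions of the Stokes operator $A$, which are also orthogonal in $V$. Writing $P_m$ for the orthogonal projection onto $\mathrm{span}\{w_1,\dots,w_m\}$, I seek a Galerkin approximation $u_m(t)=\sum_{k=1}^m c_k^m(t)w_k$ solving
\begin{equation*}
\ip{\derivt u_m(t)}{w_k}+\nu\IP{u_m(t)}{w_k}+\angip{B(u_m(t),u_m(t))}{w_k}=\angip{g(t)}{w_k},\quad k=1,\dots,m,
\end{equation*}
with $u_m(T)=P_m u_0$. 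The right-hand sides are measurable in $t$ and the nonlinear coefficients are polynomial (hence locally Lipschitz) in the $c_k^m$, so Carath\'eodory's theorem yields a local solution, which the a priori bound below extends to all of $[T,T']$.

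Next I would derive the uniform estimates. Testing the Galerkin system against $u_m$ itself and using the orthogonality relation $\angip{B(u,u)}{u}=0$, which follows from $\angip{B(u,v)}{w}=-\angip{B(u,w)}{v}$, gives the exact energy balance
\begin{equation*}
\tfrac12\derivt\abs{u_m}^2+\nu\norm{u_m}^2=\angip{g}{u_m}.
\end{equation*}
Estimating the right-hand side by $\angip{g}{u_m}\le\tfrac{\nu}{2}\norm{u_m}^2+\tfrac{1}{2\nu}\norm{g}_{V'}^2$ and integrating bounds $\{u_m\}$ uniformly in $m$ in $L^\infty([T,T'];H)\cap L^2([T,T'];V)$. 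A separate bound on $\derivt u_m$ requires controlling the nonlinear term in $V'$; in three dimensions only the interpolation estimate $\norm{B(u_m,u_m)}_{V'}\le C\abs{u_m}^{1/2}\norm{u_m}^{3/2}$ is available, which together with the previous bounds places $\{\derivt u_m\}$ in a bounded subset of $L^{4/3}([T,T'];V')$.

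I would then extract a subsequence converging weakly-$*$ in $L^\infty([T,T'];H)$ and weakly in $L^2([T,T'];V)$ to a limit $u$. Because $V$ embeds compactly into $H$, the Aubin--Lions lemma combined with the $\derivt u_m$ bound upgrades this to strong convergence in $L^2([T,T'];H)$, which is precisely what is needed to pass to the limit in the nonlinear term $\angip{B(u_m,u_m)}{w_k}$. Passing to the limit in the Galerkin identity, and then running a diagonal argument over an exhaustion of $[T,\infty)$ by bounded intervals, produces a weak solution verifying the three conditions of Definition~\ref{weaksoln}; in particular the regularity $\derivt u\in L^1_{loc}([T,\infty);V')$ together with $u\in L^\infty_{loc}([T,\infty);H)$ yields $u\in C([T,\infty);H_w)$.

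The main obstacle is the energy inequality \eqref{LHenergyineq}, and it is here that three dimensions forces an inequality in place of the equality enjoyed by each $u_m$. The idea is to integrate the Galerkin energy balance from $t_0$ to $t$ and pass to the limit term by term: the dissipation $\nu\int_{t_0}^t\norm{u_m}^2\dds$ and the terminal energy $\abs{u_m(t)}^2$ survive only through weak lower semicontinuity, so their limits can only bound the corresponding quantities for $u$ from below, while the forcing integral $\int_{t_0}^t\angip{g}{u_m}\dds$ converges by the strong $L^2([T,T'];H)$ convergence paired with the weak $V$-convergence. To turn $\abs{u_m(t_0)}^2$ into an honest $\abs{u(t_0)}^2$ I would pass to a further subsequence for which $u_m(t_0)\xrightarrow{\ds}u(t_0)$ for almost every $t_0$; this a.e.-in-time strong convergence in $H$ is exactly why the inequality is asserted only for $t_0$ a.e. in $[T,\infty)$, the exceptional null set being the set of times at which strong convergence fails. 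Finally, the validity of the inequality for \emph{every} $t\ge t_0$ (rather than a.e.\ $t$) would follow from the weak continuity $u\in C([T,\infty);H_w)$ and lower semicontinuity of the norm, letting an admissible time increase to the desired $t$.
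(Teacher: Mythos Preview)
Your argument is the standard Faedo--Galerkin construction and is correct. Note, however, that the paper does not actually prove this theorem: it is stated as a classical result attributed to Leray and Hopf, with no proof given in the text (the surrounding references to \cite{CF88}, \cite{T84}, \cite{R01} in the proof of Lemma~\ref{NSEconvprops} indicate where the authors expect the reader to find the details). So there is nothing to compare against; your sketch simply supplies what the paper omits.

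A couple of small refinements worth making explicit if you write this up fully. First, the claim that $u_m(t_0)\xrightarrow{\ds}u(t_0)$ for a.e.\ $t_0$ follows from the strong $L^2([T,T'];H)$ convergence only after passing to a further subsequence (strong $L^2$ convergence gives a.e.\ convergence only along a subsequence), so the diagonal argument over $[T,T']$ must be interleaved with this extraction. Second, for the terminal time $t$ you invoke weak lower semicontinuity of $\abs{\cdot}$ under $u_m(t)\xrightarrow{\dw}u(t)$; to justify this pointwise weak convergence you should note that the uniform bounds on $\{u_m\}$ in $L^\infty([T,T'];H)$ together with the $L^{4/3}([T,T'];V')$ bound on $\{\derivt u_m\}$ give equicontinuity into $V'$, hence (after Arzel\`a--Ascoli and the boundedness in $H$) convergence in $C([T,T'];H_w)$, which is exactly the argument the paper later uses in Lemma~\ref{NSEconvprops}. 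With those two points made precise, your proof is complete.
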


\begin{define}
\label{LHweaksoln}
A weak solution to (\ref{NSE}) satisfying (\ref{LHenergyineq}) will be called 
a Leray-Hopf weak solution.
\end{define}

\begin{define}
\label{leraysoln}
A weak solution to (\ref{NSE}) on $[T,\infty)$ satisfying the energy 
inequality 
\begin{equation*}
\abs{u(t)}^2+2\nu\int_T^t\norm{u(s)}^2\dds\le\abs{u(T)}^2+2\int_T^t\angip{g(s)}
                                                            {u(s)}\dds
\end{equation*}
for each $t\ge T$ is called a Leray solution.
\end{define}

Leray solutions are special in that they are continuous at the starting time 
$T$. Note that via the Galerkin method, one can prove the existence of Leray 
solutions for each $u_0\in H$ and $T\in\R$. 

Assume $g$ is translationally bounded in $L^2_{\mathrm{loc}}(\R,V')$. That is, 
\begin{equation*}
\norm{g}_{L^2_b}^2:=\sup_{t\in\R}\int_t^{t+1}\norm{g(s)}^2_{V'}\dds<\infty.
\end{equation*}
We will show that there exists a bounded set $X\subset H$ which captures all 
of the asymptotic dynamics of Leray solutions with a translationally bounded 
force $g$. We will be more precise with what this means in a moment. But 
first, we need a preliminary definition and a lemma. The lemma's proof can be 
found in \cite{CV02}.

\begin{define}
A function $f(s)$ is almost everywhere equal to a monotonic non-increasing 
function on $[a,b]$ if $f(t)\le f(\tau)$ for any $t,\tau\in[a,b]\backslash Q$ 
with $\tau\le t$ and the measure of $Q$ is zero.
\end{define}

\begin{lem}
\label{CVgronwall}
Let $f(s)\in L_1([a,b])$. Then, the function $f(s)$ is almost everywhere 
equal to a monotone non-increasing function on $[a,b]$ if and only if, for 
any $\phi\in C_0^\infty((a,b))$ with $\phi(s)\ge0$, one has 
\begin{equation*}
\int_a^b f(s)\phi'(s)\dds\ge0.
\end{equation*}
\end{lem}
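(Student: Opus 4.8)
The plan is to recognize Lemma~\ref{CVgronwall} as the standard characterization of functions whose distributional derivative is non-positive: the condition $\int_a^b f(s)\phi'(s)\dds\ge0$ for all non-negative $\phi\in C_0^\infty((a,b))$ says exactly that $f'\le0$ in the sense of distributions, and such an $f$ is a.e. equal to a non-increasing function. I would prove both implications by elementary real-analysis arguments — forward differences for one direction and mollification for the other — so as to avoid invoking Lebesgue--Stieltjes integration or the Riesz representation of positive distributions.

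For the forward direction, suppose $f=g$ a.e. with $g$ non-increasing, and fix $\phi\in C_0^\infty((a,b))$ with $\phi\ge0$. Since $\phi$ is smooth with $\supp\phi$ compactly contained in $(a,b)$, the forward difference quotients $(\phi(\cdot+h)-\phi)/h$ converge to $\phi'$ uniformly on $[a,b]$ as $h\to0^+$. For $h$ small enough that the relevant translates stay inside $[a,b]$, a change of variables gives $\int_a^b f(s)\frac{\phi(s+h)-\phi(s)}{h}\dds=\frac1h\int_a^b\bigl[f(s-h)-f(s)\bigr]\phi(s)\dds$. Replacing $f$ by $g$ (they and their translates agree a.e.) and using that $g$ is non-increasing, the integrand $\bigl[g(s-h)-g(s)\bigr]\phi(s)$ is non-negative, so each difference-quotient integral is $\ge0$; letting $h\to0^+$ yields $\int_a^b f(s)\phi'(s)\dds\ge0$.

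For the reverse direction I would mollify. Take a standard non-negative mollifier $\rho_\ee$ supported in $(-\ee,\ee)$, set $f_\ee:=f*\rho_\ee$ (smooth on $[a+\ee,b-\ee]$), and write $\check\rho_\ee(x):=\rho_\ee(-x)$. For any $\phi\ge0$ in $C_0^\infty$ with support in $(a+\ee,b-\ee)$, unwinding the convolution gives $\int f_\ee'\,\phi=-\int f_\ee\,\phi'=-\int f\,(\check\rho_\ee*\phi')=-\int f\,(\check\rho_\ee*\phi)'$. Because $\check\rho_\ee*\phi$ is a legitimate non-negative test function in $C_0^\infty((a,b))$, the hypothesis forces $\int f\,(\check\rho_\ee*\phi)'\ge0$, hence $\int f_\ee'\,\phi\le0$ for all such $\phi\ge0$; as $f_\ee'$ is continuous, $f_\ee'\le0$ pointwise, so each $f_\ee$ is non-increasing on its interval of definition. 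Since $f_\ee\to f$ in $L^1_{\mathrm{loc}}((a,b))$, a subsequence converges to $f$ a.e. on a full-measure set $S$, and monotonicity passes to the limit, making $f|_S$ non-increasing; one then defines an everywhere non-increasing representative (via one-sided limits along $S$) that agrees with $f$ a.e.

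I expect the genuine content, beyond routine bookkeeping, to lie in the final limiting step. Two points need care: first, the mollified functions are only defined and monotone on the shrinking interior intervals $[a+\ee,b-\ee]$, so I would exhaust $(a,b)$ by an increasing sequence of compact subintervals and extract a diagonal subsequence converging a.e.; second, upgrading \emph{non-increasing on a full-measure set} to \emph{a.e. equal to an everywhere non-increasing function} relies on the fact that an $L^1$ function has finite one-sided limits a.e. along $S$, which is what makes the monotone representative well defined. The forward difference and convolution manipulations are entirely routine by comparison, so this passage to the monotone representative is the main obstacle.
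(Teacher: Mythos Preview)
Your argument is correct and is the standard way to prove this characterization. Note, however, that the paper does not actually supply a proof of Lemma~\ref{CVgronwall}: it merely states the lemma and refers the reader to \cite{CV02} for the proof. So there is no ``paper's own proof'' to compare against; you have simply filled in what the authors chose to cite out. Both directions you wrote are fine --- the forward-difference argument for the easy implication and mollification for the converse are exactly the elementary route one finds in \cite{CV02} and standard references --- and your caveats about shrinking domains and constructing the monotone representative from a.e.\ limits are the right places to be careful.
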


So, let $u$ be a Leray solution to (\ref{NSE}) with $g$ translationally 
bounded and starting time $t_0$. That is, a point where the energy inequality 
(\ref{LHenergyineq}) is satisfied. Then, applying Young's inequality followed 
by the Poincar\`{e} inequality, we find that 
\begin{equation}
\label{energyineqform2}
\abs{u(t)}^2+\nu\lambda_1\int_{t_0}^t\abs{u(s)}^2\dds\le\abs{u(t_0)}^2+
    \frac{1}{\nu}\int_{t_0}^t\norm{g(s)}^2_{V'}\dds.
\end{equation}
Let $\phi\in C_0^\infty((t_0,\tau))$ for $\tau\ge t$. Then, the above 
inequality is equivalent to the following distributional inequality
\begin{equation}
\label{LHdistform}
-\int_{t_0}^\tau\abs{u(s)}^2\phi'(s)\dds+\nu\lambda_1\int_{t_0}^\tau
    \abs{u(s)}^2\phi(s)\dds\le\frac{1}{\nu}\int_{t_0}^\tau
       \norm{g(s)}^2_{V'}\phi(s)\dds.
\end{equation}
Replacing $\phi$ with $e^{\nu\lambda_1 s}\phi\in C_0^\infty((t_0,\tau))$, 
we have that 
\begin{align*}
-\int_{t_0}^\tau\abs{u(s)}^2e^{\nu\lambda_1 s}\phi'(s)\dds
   &\le\frac{1}{\nu}\int_{t_0}^\tau\norm{g(s)}_{V'}^2
         e^{\nu\lambda_1 s}\phi(s)\dds \\
   &=\frac{1}{\nu}\int_{t_0}^\tau\frac{\mathrm{d}}{\mathrm{ds}}
         \left(\int_0^s \norm{g(r)}_{V'}^2e^{\nu\lambda_1 r}\ddr\right)
           \phi(s)\dds \\
   &=-\frac{1}{\nu}\int_{t_0}^\tau\left(\int_0^s\norm{g(r)}_{V'}^2e^{\nu
         \lambda_1 r}\ddr\right)\phi'(s)\dds.
\end{align*}
Rearranging, and using Lemma~\ref{CVgronwall}, we get that 
\begin{equation}
\label{almostthere}
\abs{u(t)}^2e^{\nu\lambda_1 t}-\abs{u(t_0)}^2e^{\nu\lambda_1 t_0} 
  \le\frac{1}{\nu}\int_{t_0}^t\norm{g(s)}^2_{V'}e^{\nu\lambda_1 s}\dds.
\end{equation}
It remains to estimate the right-hand side of (\ref{almostthere}). As in 
\cite{LWZ05}, we have that 
\begin{align*}
\int_{t_0}^t\norm{g(s)}^2_{V'}e^{\nu\lambda_1 s}\dds 
  &\le\int_{t-1}^t\norm{g(s)}_{V'}^2e^{\nu\lambda_1 s}\dds +
    \int_{t-2}^{t-1}\norm{g(s)}_{V'}^2e^{\nu\lambda_1 s}\dds +\cdots \\
  &\le e^{\nu\lambda_1t}\left(1+e^{-\nu\lambda_1}+e^{-2\nu\lambda_1}
    \right)\sup_{t\in\R}\int_t^{t+1}\norm{g(s)}_{V'}^2\dds \\
  &\le\frac{e^{\nu\lambda_1 t}}{1-e^{-\nu\lambda_1}}\norm{g}_{L^2_b}^2.
\end{align*}
Thus, we arrive at the following inequality
\begin{equation*}
\label{absorbing}
\abs{u(t)}^2\le\abs{u(t_0)}^2e^{\nu\lambda_1(t_0-t)}+
   \frac{1}{\nu}\frac{\norm{g}_{L^2_b}^2}{1-e^{-\nu\lambda_1}}.
\end{equation*}

Let 
\begin{equation*}
R:=\frac{2\norm{g}_{L^2_b}^2}{\nu(1-e^{-\nu\lambda_1})}.
\end{equation*}
Then, $X$ given by 
\begin{equation*}
X:=\{u\in H:\abs{u}\le R\}
\end{equation*}
is a closed absorbing ball in $H$ for Leray solutions. Moreover, $X$ is weakly 
compact and contains all of the asymptotic dynamics of Leray solutions by the 
above argument. Define the strong and weak distances on $X$, respectively, by 
\begin{equation*}
\ds(u,v):=\abs{u-v}\quad\mathrm{~and~}\quad\dw(u,v):=\sum_{k\in\Z^3}
   \frac{1}{2^{\abs{k}}}
       \frac{\abs{\hat{u_k}-\hat{v_k}}}{1+\abs{\hat{u_k}-\hat{v_k}}}
\end{equation*}
for $u,v\in H$ where $\hat{u_k}$ and $\hat{v_k}$ are the Fourier coefficients 
of $u$ and $v$, respectively. Note that the above weak metric $\dw$ induces 
the weak topology $H_w$ on $X$. Next, we define our generalized evolution 
system on $X$ by 
\begin{align*}
\EE([T,\infty)):=&\{u:u\mathrm{~is~a~Leray{-}Hopf~solution~of~(\ref{NSE})~
            on~}[T,\infty) \\
                 &\mathrm{~and~}u(t)\in X\mathrm{~for~}t\in[T,\infty)\}, \\
\EE((-\infty,\infty)):=&\{u:u\mathrm{~is~a~Leray{-}Hopf~solution~of~(\ref{NSE})
       ~on~}(-\infty,\infty) \\
                 &\mathrm{~and~}u(t)\in X\mathrm{~for~}t\in(-\infty,\infty)\}.
\end{align*}
Then, $\EE$ satisfies the necessary properties in Definition~\ref{GES} and 
forms a generalized evolutionary system on $X$. We must use Leray-Hopf 
solutions as our generalized evolutionary system since the restriction of a 
Leray solution may not be a Leray solution, but it is always a Leray-Hopf 
solution.

Note that an absorbing ball does not exist for the Leray-Hopf weak solutions. 
An absorbing ball is a bounded set $X\subset H$ so that for any $B\subset H$ 
bounded and any $s_0\in\R$, there is some $\sig:=\sig(B)\ge s_0$ so that if 
$u(s_0)\in B$, then $u(s)\in X$ for $s\ge \sig$. This requires uniformity in 
$B$. However, Leray-Hopf solutions may have ``jumps" at the starting point 
which can be as large as you like. Thus, even if you were to consider the 
bounded set $B:=\{0\}$ for the Leray-Hopf solutions with $s_0:=0$ in the 
autonomous case, you may not have such a structure as Figure~\ref{lerayfig} 
illustrates. 

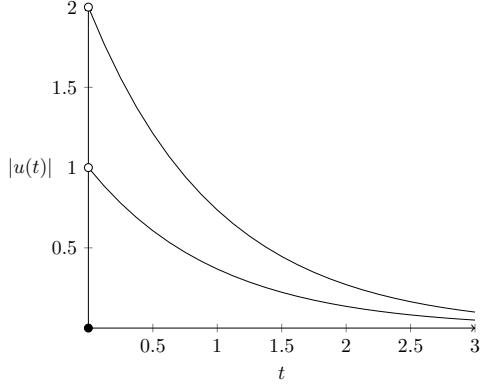
\begin{figure}
\begin{center}
\begin{tikzpicture}[scale=.75]
\begin{axis}
  [
    axis lines=middle, 
    axis line style={->},
    x label style={at={(axis description cs:0.5,-0.1)},anchor=north},
    y label style={at={(axis description cs:-0.15,0.55)},anchor=north},
    xlabel={$t$},
    ylabel={$\abs{u(t)}$}
  ]  
  \addplot[domain=0:3,black] {exp(-x)};
  \addplot[domain=0:3,black] {2*exp(-x)};
  \addplot[holdot] coordinates{(0,1)(0,2)};
  \addplot[soldot] coordinates{(0,0)};
\end{axis}
\end{tikzpicture}
\end{center}
\caption{Possible Leray-Hopf weak solutions for the Navier-Stokes equations 
with zero forcing.}
\label{lerayfig}
\end{figure}

By Theorem~\ref{exists}, $\EE$ has a weak pullback attractor. Next, we will 
show that $\EE$ satisfies $\iref{A1}$ and $\iref{A3}$ under the current 
conditions, and $\EE$ satisfies $\iref{A2}$ under an additional assumption 
which we will name later. We start with a preliminary lemma.

\begin{lem}
\label{NSEconvprops}
Let $u_n$ be a sequence of Leray-Hopf weak solutions of (\ref{NSE}) on 
$[s,\infty)$, so that $u_n(t)\in X$ for all $t\ge s$ for some $s\in\R$. Then, 
there exists a subsequence $n_j$ so that $u_{n_j}$ converges to some $u$ in 
$C([t_1,t_2];H_w)$. That is, 
\begin{equation*}
(u_{n_j},v)\rightarrow(u,v)
\end{equation*}
uniformly on $[t_1,t_2]$ as $n_j\rightarrow\infty$ for all $v\in H$.
\end{lem}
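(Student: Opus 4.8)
The plan is to extract uniform a priori bounds on the sequence $\{u_n\}$ and then run an Arzel\`a--Ascoli argument against a countable dense family of test functions. Since $u_n(t)\in X$ for all $t\ge s$, we have $\abs{u_n(t)}\le R$ for every $t$, so $\{u_n\}$ is uniformly bounded in $L^\infty([s,\infty);H)$. Next I would invoke the Leray--Hopf energy inequality (\ref{LHenergyineq}): fix a point $t_0\in(s,t_1)$ at which the inequality holds for \emph{every} $n$ simultaneously (possible since each exceptional set is null, so their countable union is still null), where moreover $\abs{u_n(t_0)}\le R$. Bounding $2\int\angip{g}{u_n}\,\dds$ via Young's inequality and absorbing the resulting $\norm{u_n}^2$ term on the left yields a bound on $\int_{t_1}^{t_2}\norm{u_n(r)}^2\,\ddr$ that is uniform in $n$, the translational boundedness of $g$ controlling $\int_{t_1}^{t_2}\norm{g}_{V'}^2\,\ddr$. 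Thus $\{u_n\}$ is uniformly bounded in $L^2([t_1,t_2];V)$.

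To obtain equicontinuity I would estimate $\derivt u_n=-\nu Au_n-B(u_n,u_n)+g$ in $V'$. The linear term satisfies $\norm{Au_n}_{V'}=\norm{u_n}$, hence lies in $L^2([t_1,t_2];V')$ uniformly; for the nonlinearity the three-dimensional Ladyzhenskaya estimate gives $\norm{B(u_n,u_n)}_{V'}\le c\abs{u_n}^{1/2}\norm{u_n}^{3/2}$, so that $\int_{t_1}^{t_2}\norm{B(u_n,u_n)}_{V'}^{4/3}\,\ddr\le cR^{2/3}\int_{t_1}^{t_2}\norm{u_n}^2\,\ddr$ is uniformly bounded. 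Consequently $\{\derivt u_n\}$ is uniformly bounded in $L^{4/3}([t_1,t_2];V')$. I expect this nonlinear bound, at the borderline exponent that 3D makes available, to be the main technical point, although it is entirely classical.

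With these bounds in hand, fix $v\in V$. Writing $\ip{u_n(t)}{v}-\ip{u_n(\tau)}{v}=\int_\tau^t\angip{\derivt u_n(r)}{v}\,\ddr$ and applying H\"older's inequality with conjugate exponents $4/3$ and $4$ gives $\abs{\ip{u_n(t)}{v}-\ip{u_n(\tau)}{v}}\le C\norm{v}_V\abs{t-\tau}^{1/4}$ uniformly in $n$. Thus the scalar functions $t\mapsto\ip{u_n(t)}{v}$ are uniformly bounded (by $R\abs{v}$) and equicontinuous on $[t_1,t_2]$. Choosing $\{v_k\}$ dense in $V$ (hence in $H$) and combining Arzel\`a--Ascoli with a diagonal extraction, I obtain a subsequence $u_{n_j}$ such that $\ip{u_{n_j}(\cdot)}{v_k}$ converges uniformly on $[t_1,t_2]$ for every $k$.

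Finally I would upgrade this to all $v\in H$: for arbitrary $v\in H$ and $v_k\to v$ in $H$, the estimate $\abs{\ip{u_{n_j}(t)-u_{n_i}(t)}{v}}\le\abs{\ip{u_{n_j}(t)-u_{n_i}(t)}{v_k}}+2R\abs{v-v_k}$ shows that $\ip{u_{n_j}(\cdot)}{v}$ is uniformly Cauchy on $[t_1,t_2]$. Hence for each $t$ the functional $v\mapsto\lim_j\ip{u_{n_j}(t)}{v}$ is bounded and linear, defining through Riesz representation some $u(t)\in H$ with $\abs{u(t)}\le R$, and $\ip{u_{n_j}(\cdot)}{v}\to\ip{u(\cdot)}{v}$ uniformly on $[t_1,t_2]$ for every $v\in H$. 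Since each $\ip{u_{n_j}(\cdot)}{v}$ is continuous, its uniform limit $\ip{u(\cdot)}{v}$ is continuous, so $u\in C([t_1,t_2];H_w)$ and $u_{n_j}\to u$ in $C([t_1,t_2];H_w)$, which is exactly the asserted convergence. Note that the lemma claims convergence only to \emph{some} limit, so no verification that $u$ solves (\ref{NSE}) is needed.
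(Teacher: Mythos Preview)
Your proof is correct and follows a genuinely different route from the paper's. You argue directly via Arzel\`a--Ascoli on the scalar functions $t\mapsto\ip{u_n(t)}{v}$: a uniform $L^{4/3}$-bound on $\derivt u_n$ gives equicontinuity, a diagonal extraction over a countable dense family handles all test functions at once, and the density argument passes from $V$ to $H$. The paper instead extracts weak and weak-$*$ limits, invokes an Aubin--Lions compactness argument to obtain strong $L^2((t_1,t_2);H)$-convergence, passes to the limit in the equation to identify $u$ as a weak solution, and then proves $u\in C([t_1,t_2];H_w)$ by integrating the limiting equation. Your route is more elementary and delivers exactly the lemma as stated; the paper's route is heavier but produces extra information --- strong $L^2(H)$-convergence and the identification of the limit as a weak solution --- that is used immediately afterward in Theorem~\ref{NSEa1a3} to verify \iref{A1} and \iref{A3}. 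One small remark: your choice of $t_0\in(s,t_1)$ tacitly assumes $t_1>s$; when $t_1=s$ (the case actually used in the diagonalisation of Theorem~\ref{NSEa1a3}), pick $t_0$ in $(s,t_2)$ and let $t_0\downarrow s$ to recover the uniform $L^2(V)$-bound on all of $[s,t_2]$.
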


\begin{proof}
The major arguments in this lemma are classical. For more information, see 
\cite{CF88}, \cite{T84}, and \cite{R01}, among others. 

First, using (\ref{LHenergyineq}) as well as the definition of $\EE$, we have 
that $u_n$ is uniformly bounded in $L^\infty([t_1,t_2];H)$ and in 
$L^2((t_1,t_2);V)$. Thus, we use Alaoglu compactness theorem to find 
subsequences (which we will keep reindexing as $u_n$) which converge to 
some $u$ weak-* in $L^\infty((t_1,t_2);H)$ and weakly in 
$L^2((t_1,t_2);V)$.

Next, using the fact that $A:V\rightarrow V'$ is continuous using the 
assignment 
\begin{equation*}
\angip{Av}{\phi}:=\ip{A^{1/2}v}{A^{1/2}\phi}=\IP{v}{\phi},
\end{equation*}
we get that $Au_n$ is uniformly bounded in $L^2((t_1,t_2);V')$. Thus, we 
can extract a subsequence and relable as $u_n$ so that $Au_n$ converges to 
$Au$ weakly in $L^2((t_1,t_2);V')$.

A classical estimate gives us that 
\begin{equation*}
\norm{B(u,u)}_{V'}\le C\abs{u}^{1/2}\norm{u}^{3/2}
\end{equation*}
for any $u\in V$ and $C$ a constant. Thus, using the fact that $u_n$ is 
uniformly bounded in both $L^{\infty}((t_1,t_2);H)$ and $L^2((t_1,t_2);V)$, 
we see that $B(u_n,u_n)$ is uniformly bounded in $L^{4/3}((t_1,t_2);V')$ 
using the following estimates:
\begin{align*}
\norm{B(u_n,u_n)}_{L^{4/3}((t_1,t_2);V')}^{4/3} 
  &\le C\int_{t_1}^{t_2}\abs{u_n(s)}^{2/3}\norm{u_n(s)}^2\dds \\
  &\le C\norm{u_n}^{2/3}_{L^\infty((t_1,t_2);H)}
        \norm{u_n}^2_{L^2((t_1,t_2);V)}.
\end{align*}
Since $Au_n$, $B(u_n,u_n)$, and $g$ are uniformly bounded sequences in 
$L^{4/3}((t_1,t_2);V')$, we use (\ref{NSEweak}) to say that so is 
$\derivt u_n$. So, we have that $B(u_n,u_n)$ converges weakly in 
$L^{4/3}((t_1,t_2);V')$ and $\derivt u_n$ converges weakly in 
$L^{4/3}((t_1,t_2);V')$. Moreover, a standard compactness argument gives 
us that then $u_n$ converges strongly to $u$ in $L^2((t_1,t_2);H)$. Using 
the strong convergence of $u_n$ as well as the uniform bound of $u_n$ in 
$L^{\infty}((t_1,t_2);H)$, a well known result shows that $B(u_n,u_n)$ 
converges weakly to $B(u,u)$ in $L^{4/3}((t_1,t_2);V')$. 

Passing to the limit gives us that 
\begin{equation*}
\derivt u+\nu Au+B(u,u)=g
\end{equation*}
in $V'$. Now, take the inner product with $v\in V$ and integrate from 
$t$ to $t+h$ where $t_1\le t<t+h\le t_2$. We get that 
\begin{equation*}
\ip{u(t+h)-u(t)}{v}=-\nu\int_t^{t+h}\IP{u(r)}{v}\ddr-
                      \int_t^{t+h}\angip{B(u(r),u(r))}{v}\ddr+
                      \int_t^{t+h}\angip{f(r)}{v}\ddr.
\end{equation*}
Taking the absolute value and using Cauchy-Swartz followed by 
H\"{o}lder's inequality, we find that 
\begin{align*}
\abs{\ip{u(t+h)-u(t)}{v}}\le &
    ~\nu\left(\int_t^{t+h}\norm{u(r)}^2\ddr\right)^{1/2}
       \left(\int_t^{t+h}\norm{v}^2\ddr\right)^{1/2}  \\
  &+ \left(\int_t^{t+h}\norm{B(u(r),u(r))}_{V'}^{4/3}\ddr\right)^{3/4}
     \left(\int_t^{t+h}\norm{v}^4\ddr\right)^{1/4} \\
  &+ \left(\int_t^{t+h}\norm{f(r)}_{V'}^2\ddr\right)^{1/2}
     \left(\int_t^{t+h}\norm{v}^2\ddr\right)^{1/2}.
\end{align*}
Using the above uniform bounds then gives us that 
\begin{equation*}
\lim_{h\rightarrow0}\ip{u(t+h)-u(t)}{v}=0.
\end{equation*}
Since $V$ is dense in $H$, $u\in C([t_2,t_1],H_w)$, and we are done.
\end{proof}

\begin{thm}
\label{NSEa1a3}
The generalized evolutionary system $\EE$ satisfies $\iref{A1}$ and 
$\iref{A3}$.
\end{thm}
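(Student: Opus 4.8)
The plan is to prove the two properties in the order \iref{A3} then \iref{A1}, since the strong convergence in \iref{A3} is precisely the ingredient needed to pass to the limit in the energy inequality when proving closedness for \iref{A1}. Throughout I would lean on Lemma~\ref{NSEconvprops} and, more precisely, on the uniform bounds established inside its proof: for any sequence $u_n\in\EE([s,\infty))$ the functions $u_n$ are uniformly bounded in $L^\infty((s,t);H)$ and in $L^2((s,t);V)$, and $\derivt u_n$ is uniformly bounded in $L^{4/3}((s,t);V')$.

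For \iref{A1} I would separate precompactness from closedness. Precompactness is immediate from Lemma~\ref{NSEconvprops}: applying it on each window $[s,s+k]$ and extracting a diagonal subsequence produces a limit in $C([s,\infty);\Xw)$. For closedness, suppose $u_n\in\EE([s,\infty))$ with $u_n\to u$ in $C([s,\infty);\Xw)$; I must show $u\in\EE([s,\infty))$, i.e.\ that $u$ is a Leray--Hopf weak solution with $u(t)\in X$ for all $t\ge s$. That $u(t)\in X$ is immediate, since $X$ is weakly compact hence weakly closed and $u_n(t)\xrightarrow{\dw}u(t)$; that $u$ solves (\ref{NSEweak}) in the weak sense is exactly the passage-to-the-limit carried out in Lemma~\ref{NSEconvprops}. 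The remaining and substantive point is that $u$ satisfies the energy inequality (\ref{LHenergyineq}).

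I expect this energy-inequality step to be the main obstacle. Rather than work with the pointwise form (\ref{LHenergyineq}), which would force me to select a single starting time $t_0$ good for every $u_n$ simultaneously, I would pass to the limit in its distributional form: for every nonnegative $\phi\in C_0^\infty((s,t))$,
\[
-\int_s^t \abs{u_n(r)}^2\phi'(r)\,\ddr+2\nu\int_s^t\norm{u_n(r)}^2\phi(r)\,\ddr\le 2\int_s^t\angip{g(r)}{u_n(r)}\phi(r)\,\ddr.
\]
The full sequence converges strongly in $L^2((s,t);H)$ (every subsequence has, by the Aubin--Lions argument in Lemma~\ref{NSEconvprops}, a strongly convergent sub-subsequence, whose limit must be the weak limit $u$), so the first integral converges and, since $u_n\to u$ weakly in $L^2((s,t);V)$, so does the third. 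For the dissipation term the sign $\phi\ge0$ lets me use weak lower semicontinuity of the $L^2((s,t);V)$-norm, giving $\int\norm{u}^2\phi\le\liminf\int\norm{u_n}^2\phi$. Taking $\liminf$ through the inequality (using $\lim A_n+\liminf B_n\le\liminf(A_n+B_n)$) yields the distributional inequality for $u$, which Lemma~\ref{CVgronwall} converts back into (\ref{LHenergyineq}) holding for $t\ge t_0$ and a.e.\ $t_0$. Hence $u\in\EE([s,\infty))$ and \iref{A1} holds.

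For \iref{A3}, given $u_n\to u$ in $C([s,t];\Xw)$ the same estimates give strong convergence $u_n\to u$ in $L^2((s,t);H)$ for the full sequence. This by itself only yields a.e.\ strong convergence along a subsequence, so to reach the stated a.e.\ statement for the whole sequence I would invoke the monotonicity built into the energy inequality: the functions
\[
y_n(\tau):=\abs{u_n(\tau)}^2-2\int_s^\tau\angip{g(r)}{u_n(r)}\,\ddr
\]
are a.e.\ equal to non-increasing functions, and $y_n\to y$ in $L^1((s,t))$. Since monotone functions converging in $L^1$ converge pointwise at every continuity point of the limit, $y_n(t_0)\to y(t_0)$ for all but countably many $t_0$; as $\int_s^{t_0}\angip{g(r)}{u_n(r)}\,\ddr\to\int_s^{t_0}\angip{g(r)}{u(r)}\,\ddr$, this forces $\abs{u_n(t_0)}\to\abs{u(t_0)}$ for a.e.\ $t_0$. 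Combined with $u_n(t_0)\xrightarrow{\dw}u(t_0)$ and the Radon--Riesz property of $H$, I conclude $u_n(t_0)\xrightarrow{\ds}u(t_0)$ for a.e.\ $t_0\in[s,t]$. The delicate thread running through both steps is keeping the inequality directions consistent: equality of norms (strong convergence) must be used at interior times, while only lower semicontinuity is available at the terminal time and in the dissipation, and the upgrade to full-sequence a.e.\ convergence rests entirely on the monotone structure of $y_n$.
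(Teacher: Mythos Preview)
Your proposal is correct and follows essentially the same route as the paper: Lemma~\ref{NSEconvprops} plus diagonalization for the $C([s,\infty);\Xw)$ compactness, passage to the limit in the energy inequality for closedness, and the Aubin--Lions strong $L^2((s,t);H)$ convergence for \iref{A3}. The paper proves \iref{A1} first and \iref{A3} second, but the ingredients are identical.

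The one genuine difference is your treatment of full-sequence a.e.\ convergence in \iref{A3}. The paper simply writes ``$\int_s^t\abs{u_n(r)-u(r)}^2\,\ddr\to 0$, thus $\abs{u_n(t_0)}\to\abs{u(t_0)}$ a.e.'', which strictly speaking only yields a.e.\ convergence along a subsequence. Your monotonicity argument via $y_n(\tau)=\abs{u_n(\tau)}^2-2\int_s^\tau\angip{g}{u_n}$ is a clean way to upgrade this to the full sequence: the Leray--Hopf inequality makes each $y_n$ a.e.\ non-increasing, $y_n\to y$ in $L^1$ by the strong $L^2$ convergence, and $L^1$-convergent monotone functions converge at every continuity point of the (monotone) limit, hence outside a countable set. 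This is a real refinement of the paper's argument rather than a different method, and it closes a gap the paper leaves implicit. Your handling of the energy-inequality limit via the distributional form and weak lower semicontinuity of the dissipation term is likewise what the paper is tacitly invoking when it says ``the energy inequality converges as well''.
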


\begin{proof}
Let $u_n$ be a sequence in $\EE([s,\infty))$ for some $s\in\R$. Then, 
repeatedly using Lemma~\ref{NSEconvprops}, there is a subsequence which 
we reindex as $u_n$ that converges to some $u^1\in C([s,s+1];H_w)$. 
A further subsequence converges to some $u^2\in C([s,s_2];H_w)$ with 
$u^1(t)=u^2(t)$ on $[s,s+1]$. Continuing this diagonalization process, 
we get that there is some subsequence $u_{n_j}$ converging to $u\in 
C([s,\infty),H_w)$. Note that the convergence in 
Lemma~\ref{NSEconvprops} gives us that the energy inequality
\begin{equation*}
\abs{u_n(t)}^2+2\nu\int_{t_0}^t\norm{u_n(s)}^2\dds\le\abs{u_n(t_0)}^2
 +2\int_{t_0}^t\angip{g(s)}{u_n(s)}\dds
\end{equation*}
converges as well to 
\begin{equation*}
\abs{u(t)}^2+2\nu\int_{t_0}^t\norm{u(s)}^2\dds\le\abs{u(t_0)}^2
 +2\int_{t_0}^t\angip{g(s)}{u(s)}\dds
\end{equation*}
for $t\ge t_0$ and $t_0$ a.e. in $[s,\infty)$. That is, $u\in\EE
([s,\infty))$, and $\iref{A1}$ is proven.

For $\iref{A3}$, let $u_n\in\EE[s,\infty)$ for some $s\in\R$ and 
let $u_n\rightarrow u\in\EE([s,\infty))$ in $C([s,t];H_w)$. 
Then, as we saw in Lemma~\ref{NSEconvprops}, $u_n$ is uniformly bounded 
in $L^2([s,t];V)$ for any $T\ge s$ and $\derivt u_n$ is uniformly 
bounded in $L^{4/3}([s,t];V')$ giving us that $u_n\rightarrow u$ 
strongly in $L^2([s,t];H)$. In particular, we have 
\begin{equation*}
\int_s^t\abs{u_n(r)-u(r)}^2\ddr\rightarrow 0
\end{equation*}
as $n\rightarrow\infty$. Thus, $\abs{u_n(t_0)}\rightarrow\abs{u(t_0)}$ 
a.e. on $[s,t]$.
\end{proof}

Therefore, using $\iref{A1}$ we have the following results for $\EE$.

\begin{thm}
\label{a1results}
The weak global attractor for $\EE$, $\AAw(t)$, is the maximal pullback 
quasi-invariant and maximal pullback invariant subset of $X$. Also, 
\begin{equation*}
\AAw(t)=\II(t)=\{u(t):u\in\EE((-\infty,\infty))\}.
\end{equation*}
Moreover, $\EE$ satisfies the weak pullback tracking property and if the strong 
pullback attractor $\AAs(t)$ exists, $\AAw(t)=\AAs(t)$.
\end{thm}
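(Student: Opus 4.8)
The plan is to recognize that this theorem is essentially an immediate corollary of the abstract machinery developed in Section~\ref{tracking}, specialized to the Navier--Stokes system $\EE$. The crucial analytic input is already in hand: Theorem~\ref{NSEa1a3} establishes that $\EE$ satisfies property \iref{A1}, namely that $\EE([s,\infty))$ is compact in $C([s,\infty);\Xw)$ for each $s\in\R$. Once this is secured, every claim in the statement follows by citing the appropriate abstract result, with no new estimates required.

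First I would invoke Theorem~\ref{weak=i} directly. Since $\EE$ satisfies \iref{A1}, that theorem yields at once that the weak pullback attractor satisfies $\AAw(t)=\II(t)=\{u(t):u\in\EE((-\infty,\infty))\}$, that $\AAw(t)$ is the maximal pullback invariant and maximal pullback quasi-invariant subset of $X$, and that the weak pullback tracking property holds --- this last being precisely the ``Moreover'' clause of Theorem~\ref{weak=i}. This accounts for all but the final assertion.

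For the statement about the strong pullback attractor, I would appeal to the corollary immediately following Corollary~\ref{a1weak=strong}, which asserts that for a generalized evolutionary system satisfying \iref{A1}, the existence of the strong pullback attractor $\AAs(t)$ forces $\AAs(t)=\AAw(t)$. If one prefers to avoid citing that unlabeled corollary, the argument is short enough to reconstruct: Corollary~\ref{a1weak=strong} applied with $A=X$ (so that $\WWw(X,t)\subseteq X$ trivially) gives $\WWw(X,t)=\WWs(X,t)$, and then Theorem~\ref{exists} identifies $\AAs(t)=\WWs(X,t)=\WWw(X,t)=\AAw(t)$ whenever $\AAs(t)$ exists.

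The point worth emphasizing is that there is no genuine obstacle remaining at this stage: all of the hard work has already been pushed into Theorem~\ref{NSEa1a3}, whose proof rests in turn on the energy inequality (\ref{LHenergyineq}) and the weak-continuity and compactness facts established in Lemma~\ref{NSEconvprops}. Consequently I would keep the proof to a few lines of assembly rather than reproducing any of the underlying analysis, being careful only to note explicitly that \iref{A1} is the single hypothesis on which each cited theorem depends.
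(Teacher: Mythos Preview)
Your proposal is correct and matches the paper's approach exactly: the paper does not even write out a proof of Theorem~\ref{a1results}, merely prefacing it with ``Therefore, using \iref{A1} we have the following results for $\EE$,'' so the theorem is intended as an immediate consequence of Theorem~\ref{weak=i} and the corollary following Corollary~\ref{a1weak=strong}, precisely as you have outlined.
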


Next, we assume that $g$ is normal in $L^2_{loc}(\R;V')$. That is, the 
following definition introduced in \cite{LWZ05}:
\begin{define}
\label{normal}
Let $Y$ be a Banach space. We say that a function $\phi\in L^2_{loc}(\R;Y)$ is 
normal in $L^2_{loc}(\R;Y)$ if, for any $\ee>0$, there exists a $\delta:=
\delta(\ee)$, so that 
\begin{equation*}
\sup_{t\in\R}\int_t^{t+\delta}\norm{\phi(s)}^2_Y\dds\le\ee.
\end{equation*}
\end{define}
This leads us to the following result.

\begin{thm}
\label{NSEa2}
The generalized evolutionary system $\EE$ with normal forcing term $g$ 
satisfies $\iref{A2}$.
\end{thm}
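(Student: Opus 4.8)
The plan is to derive the required property \iref{A2} directly from the Leray--Hopf energy inequality \eqref{LHenergyineq} together with the normality of $g$, reusing the computation that already produced \eqref{energyineqform2}. First I would fix $\ee>0$, a starting time $s\in\R$, a trajectory $u\in\EE([s,\infty))$, and a time $t>s$. Since $u$ is a Leray--Hopf weak solution with $u(\cdot)\in X$, the energy inequality \eqref{LHenergyineq} holds for a.e. $t_0<t$; handling the forcing term $2\angip{g(r)}{u(r)}$ by Young's inequality and absorbing the resulting viscous contribution via the Poincar\'e inequality, exactly as in the derivation of \eqref{energyineqform2}, gives for a.e. $t_0<t$
\begin{equation*}
\abs{u(t)}^2+\nu\lambda_1\int_{t_0}^t\abs{u(r)}^2\,\ddr
  \le\abs{u(t_0)}^2+\frac{1}{\nu}\int_{t_0}^t\norm{g(r)}_{V'}^2\,\ddr.
\end{equation*}

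Next, I would discard the nonnegative dissipation term $\nu\lambda_1\int_{t_0}^t\abs{u(r)}^2\,\ddr$ to obtain the pointwise bound $\abs{u(t)}^2\le\abs{u(t_0)}^2+\frac{1}{\nu}\int_{t_0}^t\norm{g(r)}_{V'}^2\,\ddr$. The normality of $g$ (Definition~\ref{normal}) supplies, for the given $\ee$, a $\delta:=\delta(\ee)$ with $\sup_{\tau\in\R}\int_\tau^{\tau+\delta}\norm{g(r)}_{V'}^2\,\ddr\le\nu\ee^2$. Consequently, whenever $t_0\in(t-\delta,t)$ the forcing integral is at most $\ee^2$, so $\abs{u(t)}^2\le\abs{u(t_0)}^2+\ee^2$. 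Converting this to the linear form via the elementary inequality $\sqrt{a^2+b}\le a+\sqrt{b}$ for $a,b\ge0$ yields
\begin{equation*}
\abs{u(t)}\le\sqrt{\abs{u(t_0)}^2+\ee^2}\le\abs{u(t_0)}+\ee
\end{equation*}
for a.e. $t_0\in(t-\delta,t)$, which is precisely \iref{A2}. Note that the resulting $\delta$ depends only on $\ee$ (through $g$, $\nu$, and $\lambda_1$) and neither on $s$ nor on the particular trajectory $u$, so the conclusion holds uniformly over $\EE([s,\infty))$.

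The step I expect to be the genuine crux is the control of $\int_{t_0}^t\norm{g(r)}_{V'}^2\,\ddr$ over the short window $(t_0,t)\subset(t-\delta,t)$: this is exactly where normality, rather than mere translational boundedness, is indispensable. Translational boundedness only estimates such an integral by $\norm{g}_{L^2_b}^2$, a quantity that does not shrink as the window narrows and therefore cannot deliver the arbitrarily small $\ee$ demanded by \iref{A2}; normality is the precise hypothesis guaranteeing uniform smallness of the forcing integral over arbitrarily positioned windows of length $\delta$. A minor technical point to record is the measure-theoretic matching: \eqref{LHenergyineq} is valid for $t_0$ a.e.\ below $t$, which is consistent with the ``$t_0$ a.e.\ in $(t-\delta,t)$'' formulation of \iref{A2}, and when $t-\delta<s$ one simply restricts attention to $t_0\in(t-\delta,t)\cap[s,\infty)$.
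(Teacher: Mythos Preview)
Your argument is correct and follows essentially the same route as the paper: drop the dissipative term from \eqref{energyineqform2} and invoke normality of $g$ to make the remaining forcing integral small on a window of length $\delta$. You are in fact slightly more careful than the paper, which stops at the squared inequality $\abs{u(t)}^2\le\abs{u(t_0)}^2+\ee$ and simply asserts that \iref{A2} follows; your use of $\sqrt{a^2+b}\le a+\sqrt b$ makes the passage to the linear form explicit.
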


\begin{proof}
Let $u\in\EE([s,\infty))$ for some $s\in\R$. Let $\ee>0$. Then, using the 
Leray-Hopf energy inequality (\ref{LHenergyineq}), we get that 
\begin{equation*}
\abs{u(t)}^2\le\abs{u(t_0)}^2
   +\frac{1}{\nu}\int_{t_0}^t\norm{g(s)}^2_{V'}\dds
\end{equation*}
from (\ref{energyineqform2}) for all $s\le t_0\le t$, $t_0$ a.e. in 
$[s,\infty)$. Putting this together with the normality of $g$, there is some 
$\delta>0$ so that for $t_0$ a.e. in $(t-\delta,t)$, we have that 
\begin{equation*}
\abs{u(t)}^2\le\abs{u(t_0)}^2+\ee,
\end{equation*}
and $\iref{A2}$ follows.
\end{proof}

Using this, we now have that $\EE$ is pullback asymptotically compact, 
assuming that complete trajectories are strongly continuous. Thus, we can 
deduce the following results for $\EE$.

\begin{thm}
\label{NSEpac}
Suppose the generalized evolutionary system $\EE$ has a normal forcing term 
$g$. Also, suppose that $\EE((-\infty,\infty))\subseteq C((-\infty,\infty);
\Xs)$. Then, $\EE$ has a strongly compact, strong pullback attractor $\AAs(t)$. 
Also, the strong and weak pullback attractors coincide giving us that 
\begin{equation*}
\AAs(t)=\AAw(t)=\II(t)=\{u(t):u\in\EE((-\infty,\infty))\}.
\end{equation*}
That is, $\AAs(t)$ is the maximal pullback invariant and maximal pullback 
quasi-invariant set. Finally, $\EE$ has the strong pullback attracting 
property.
\end{thm}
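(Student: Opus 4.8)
The plan is to recognize this statement as the capstone assembly of the abstract machinery developed in Sections~\ref{strongPAC}--\ref{energy}, applied to the concrete Navier--Stokes system set up earlier in this section. Since the generalized evolutionary system $\EE$ of Leray--Hopf solutions inside the absorbing ball $X$ has already been constructed, the whole proof reduces to checking that the hypotheses of Theorem~\ref{123implyPAC}, Theorem~\ref{strongexists}, and Corollary~\ref{strong=i} are in force, and then reading off their conclusions. No new estimate on the equations themselves is needed; the analytic content has already been spent.

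First I would collect the three structural assumptions. By Theorem~\ref{NSEa1a3}, $\EE$ already satisfies $\iref{A1}$ (compactness of $\EE([s,\infty))$ in $C([s,\infty);\Xw)$) and $\iref{A3}$ (strong convergence a.e.\ for weakly convergent sequences of trajectories). Because the forcing $g$ is assumed normal, Theorem~\ref{NSEa2} supplies the energy inequality $\iref{A2}$; here normality is precisely the hypothesis that upgrades the merely translationally bounded control of $g$ to the uniform-in-$t_0$ control of $\abs{u(t)}$ near $t$ demanded by $\iref{A2}$. Finally, the strong continuity of complete trajectories, $\EE((-\infty,\infty))\subseteq C((-\infty,\infty);\Xs)$, is a standing hypothesis of the theorem.

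With $\iref{A1}$, $\iref{A2}$, $\iref{A3}$ and the strong continuity of complete trajectories all verified, I would invoke Theorem~\ref{123implyPAC} to conclude that $\EE$ is pullback asymptotically compact. Then Theorem~\ref{strongexists} (equivalently Corollary~\ref{123implystrong}) yields that the weak pullback attractor $\AAw(t)$ is itself a strongly compact strong pullback attractor $\AAs(t)$; inspecting its proof gives $\AAs(t)=\WWs(X,t)=\WWw(X,t)=\AAw(t)$, which settles both the existence claim and the equality $\AAs(t)=\AAw(t)$. Applying Corollary~\ref{strong=i}, whose hypotheses (pullback asymptotic compactness together with $\iref{A1}$) we have just established, then identifies $\AAs(t)=\II(t)=\{u(t):u\in\EE((-\infty,\infty))\}$, shows it is the maximal pullback invariant and maximal pullback quasi-invariant subset of $X$, and delivers the strong pullback (tracking/attracting) property. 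Chaining these identities with Theorem~\ref{a1results} produces the full string of equalities in the statement.

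The point I expect to require the most care is not a genuine obstacle but a matter of hypothesis matching: confirming that the \emph{normality} of $g$, rather than mere translational boundedness, is exactly what licenses $\iref{A2}$, and keeping straight that the strong continuity of complete trajectories is \emph{assumed} rather than proved. This last assumption is unavoidable, since for the 3D Navier--Stokes equations it amounts to a regularity property of Leray--Hopf solutions that is not known. All the substantive analysis has already been absorbed into Lemma~\ref{NSEconvprops} and Theorems~\ref{NSEa1a3} and \ref{NSEa2}, so the present theorem is in essence a corollary of the abstract framework.
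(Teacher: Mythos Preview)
Your proposal is correct and matches the paper's approach exactly: the paper's own proof is a single sentence stating that the theorem is a direct consequence of Corollary~\ref{strong=i}, Theorem~\ref{strongtracking}, and Theorem~\ref{123implyPAC}. You have simply unpacked this citation chain, making explicit that Theorems~\ref{NSEa1a3} and~\ref{NSEa2} supply \iref{A1}--\iref{A3} so that Theorem~\ref{123implyPAC} applies, after which Corollary~\ref{strong=i} (which itself bundles Theorem~\ref{strongexists} and Theorem~\ref{strongtracking}) delivers every remaining conclusion.
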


\begin{proof}
This is a direct consequence of Corollary~\ref{strong=i}, 
Theorem~\ref{strongtracking}, and Theorem~\ref{123implyPAC}.
\end{proof}


\bibliographystyle{plain}
\bibliography{refs}

\end{document}